\numberwithin{equation}{section}
\definecolor{darkgreen}{rgb}{0,0.7,0}
\newcommand{\RR}{\mathbb{R}}
\newcommand{\norm}[1]{\Vert #1 \Vert}
\newcolumntype{M}[1]{>{\centering\arraybackslash}m{#1}}
\providecommand{\keywords}[1]{\textbf{{Keywords:}} #1}
\numberwithin{equation}{section}
	\newtheorem{remark}{Remark}[section]
	\newtheorem{fig}{Figure}[section]
	\newtheorem{definition}{Definition}[section]
	\newtheorem{theorem}{Theorem}[section]
	\newtheorem{conj}{Conjecture}[section]
	\newtheorem{lemma}{Lemma}[section]
	\newtheorem{proposition}{Proposition}[section]
\title{Global stabilization of a Sterile Insect Technique model\\ by feedback laws}
\author{Kala AGBO BIDI}
\author{Lu\'\i s ALMEIDA}
\author{Jean-Michel CORON}
\affil{Sorbonne Universit\'{e}, Universit\'{e} Paris Cit\'{e}, CNRS, INRIA,  Laboratoire Jacques-Louis Lions, LJLL, F-75005 Paris, France,
\texttt{kala.agbo\_bidi@sorbonne-universite.fr},  \texttt{luis.almeida@cnrs.fr}, \texttt{jean-michel.coron@sorbonne-universite.fr}} 
\date{\empty}
\begin{document}

	\maketitle
	\begin{flushright}
		\small
		\textit{To the memory of Andrea Bacciotti, a wonderful person,\\ a leader in the field of control theory.}
	\end{flushright}
	\begin{abstract}
		This work concerns feedback global stabilization of the sterile insect technique dynamics. The Sterile Insect Technique (SIT) is presently  one of the most ecological methods for  controlling insect pests responsible for crop destruction and disease transmission worldwide.
		
This technique consists in releasing sterile males among the insect pest population, the aim being to reduce fertility and, consequently, reduce significantly the wild insect population after a few generations.
		
		In this work, we study the global stabilization of a pest population at extinction equilibrium by the SIT method and construct explicit feedback laws that stabilize the model. Numerical simulations show the efficiency of our feedback laws.
	\end{abstract}

\noindent \keywords{Sterile Insect Technique, Pest control,  Dynamical control system, Feedback design, Backstepping feedback, Lyapunov global stabilization, Mosquito population control, Vector borne disease.}

	\section{Introduction}
	Mosquitoes are known to transmit a variety of diseases such as malaria, dengue, yellow fever, Zika virus and many others. These diseases are responsible for a significant number of deaths worldwide.
	According to the World Health Organization (WHO),
	the number of malaria cases worldwide in 2022 was
	estimated at 249 million in 85 endemic countries and territories,
	 an increase of 5 million compared to 2021.
	 The estimated number of deaths in
	 2022 is 608,000 (see \cite{world2023world}).
	 Dengue and Zika virus, also transmitted by mosquitoes,
	 are estimated to cause hundreds of thousands of cases
	 and thousands of deaths each year.
	 In 2023, there was an unexpected increase
	 in dengue cases, resulting in an all-time high
	 of more than five million cases and more than
	 5,000 dengue-related deaths reported in more than
	 80 countries/territories and five WHO regions: Africa,
	 the Americas, South-East Asia, the Western Pacific, and the Eastern Mediterranean (see \cite{world2023Dengue}).

	Although there are many effective vector control measures for malaria and arboviroses, some of them can have negative impact on the environment and may result in ecological damage. For example,  insecticide spraying can have unintended effects on non-target organisms, including beneficial insects such as bees and butterflies \cite{sanchez2021indirect,kaur2024pesticides}. In addition, repeated use of insecticides often leads to the development of resistance in mosquito populations \cite{parakrama2018}.
	
The sterile insect technique (SIT) has been proposed as an alternative tool for reducing mosquito populations. The technique involves sterilizing male mosquitoes (frequently this is done using ionizing radiation) and then releasing them into the wild to mate with wild females.  This strategy was  initially applied successfully (since the 1950s) to nearly eradicate the screw-worm fly in
	North America.  Since then, this technique has also been used for different agricultural pests and disease vectors \cite{barclay1980sterile, bourtzis2021sterile, vreysen2006sterile}.
	
	One advantage of using such a technique is that it only targets the desired species and also significantly reduces the impact on the ecosystem. This is why this technique is increasingly used for the control of insect pests and insect disease vectors. Some previous works have considered applications of feedback controls to SIT: impulsive feedback controls for a 3-D  model
\cite{2019-Bliman-Cardona-Salgado-Dumont-Vasilieva-MB, 2022-Bliman-Dumont-MB},  optimal  controls for a 2-D model \cite{almeida2022optimal} and even optimal impulsive controls for an epidemic model for a vector borne disease in the human population \cite{almeida2024outbreak}.

 For the sake of simplicity, in this paper we chose to focus our presentation on the particular and important case of mosquito population control but many of the results presented can be extended to the use of SIT for the control of other pests.
	
	In order to determine the appropriate releases of sterile males to approach the extinction equilibrium of the population, we use mathematical control theory which provides the necessary tools for constructing such a control.
	Our work involves building this feedback law starting from the model proposed in \cite{strugarek2019use} without the Allee effect.
	Our theoretical results are illustrated with numerical simulations. Moreover, in  section \ref{see:Comparativesect} we do a comparative study between the different feedback laws.

\begin{remark}
While we were finishing writing this work, we learned that the reduced system (system of two ODE studied in \cite{almeida2022optimal}) was also recently studied by A. Cristofaro and L. Rossi  in \cite{2023Rossi}. In particular, they were able to construct a feedback law leading to global stabilization of the extinction equilibrium in this setting using a backstepping approach.
\end{remark}

\section{Mathematical modeling of mosquito population dynamics}
	
\subsection{Mathematical modeling of wild mosquito population dynamics }
The life cycle of mosquitoes has many stages but we will consider a simplified model where we just separate an aquatic and an adult phase. The aquatic phase includes egg, larva  and pupa stages. After the pupa stage, adult mosquitoes emerge and it is in the adult phase that mosquitoes reproduce and only female mosquitoes bite.

As a matter of fact, in order to lay their eggs, female mosquitoes need not only to be fertilized by males but also to have a blood meal. Thus, every 4-5
days, they will take a blood meal (that can sometimes involve biting several victims) and lay 100 to 150 eggs in different places (10 to 15 per place). An adult mosquito usually lives for 2 to 4 weeks. The mathematical model we present takes into account the two phases: the aquatic phase that we denote by $E$ and the  adult phase that we split  into two
sub-compartments, males, $M$,  and females, $F$. We consider the dynamics presented in \cite{strugarek2019use}. Based on this model and neglecting the Allee effect (i.e.  taking  $\beta =+\infty$ in system (2) of \cite{strugarek2019use}, which is the less favorable case for stabilizing the zero solution), we obtain the system

\begin{align}
	&\dot{E} = \beta_E F \left(1-\frac{E}{K}\right) - \big( \nu_E + \delta_E \big) E,\label{eq:S11E1}  \\
	&\dot{M} = (1-\nu)\nu_E E - \delta_M M, \label{eq:S11E2} \\
	&\dot{F} =\nu\nu_E E -  \delta_F F, \label{eq:S11E3}
\end{align}
where,
\begin{itemize}
	\item $E(t)\geq 0$ is the mosquito density in aquatic phase at time $t$;
	\item $M(t)\geq 0$ is the wild  adult  male density at time $t$;
	\item $F(t)\geq 0$ is the density of adult females  at time $t$; we have supposed that all females are immediately fertilized in this setting and this equation is the only one that changes when we add the sterile males in which case only a fraction of the females will be fertilized;
	\item $\beta_E>0$ is the oviposition rate;
	\item $\delta_E,\delta_M,\delta_F >0 $ are the death rates for eggs, wild adult males and fertilized females respectively;
	\item $\nu_E>0$ is the hatching rate for eggs;
	\item $\nu\in (0,1)$ the probability that a pupa gives rise to a female, and $(1-\nu)$ is, therefore, the probability that it gives rise to a male.  And, to simplify, we suppose  females  become fertilized immediately when they emerge from the pupal stage;
	\item $K>0$ is the environmental capacity for eggs. It can be interpreted as the maximum density of eggs that females can lay in breeding sites. Since here the larval and pupal compartments are not present, it is as if $E$ represents all the aquatic compartments, in which case in this term $K$ represents a logistic law's carrying capacity for the aquatic phase that also includes the effects of competition between larvae.  It has the dimensions of a spatial density.
\end{itemize}

We set $x=(E, M, F)^T$ and $\mathcal{D} = \RR^3_+ = \{x\in \RR^3: x\geq 0\}$. The model \eqref{eq:S11E1}-\eqref{eq:S11E3} can be written in the  form
\begin{align}
	\dot{x} = f(x)\label{eq:origine},
\end{align}
where $f:\RR^3\to\RR^3$ represents the right hand side of  \eqref{eq:S11E1}-\eqref{eq:S11E3}. The map $f$ is  continuously differentiable on $\RR^3$. Note that if $\dot x =f(x)$ and $x(0)\in \mathcal{D}$, then, for every $t\geq 0$, $x(t)$ is defined and belongs to $\mathcal{D}$.
Setting the right hand side of \eqref{eq:S11E1}-\eqref{eq:S11E3} to zero we obtain the extinction equilibrium $\textbf{0} = (0, 0, 0)^T$ and the non-trivial equilibrium $x^*=(E^*,M^*, F^*)^T$ given by
\begin{align}
	&{E}^* = K(1 -\frac{1}{\mathcal{R}_0}),\label{eq:equilibreE-1}  \\ &{M}^* = \frac{(1-\nu)\nu_E}{\delta_M}{E}^*\label{eq:equilibreM-1},\\&{F}^* = \frac{\nu\nu_E}{\delta_F}{E}^*,
\end{align}
where
\begin{align}
\mathcal{R}_0 := \frac{\beta_E\nu\nu_E}{\delta_F(\nu_E+\delta_E)}.\label{eq:hypotheseR0}
\end{align}
Note that $x^*\in \mathcal{D}$ if and only if $\mathcal{R}_0\geq 1$.
Let us now recall some definitions connected to the stability of an equilibrium.
\begin{definition}\label{def-stab}
	Let  $x_e\in\mathcal{D}$ be an equilibrium (of \eqref{eq:origine}). The equilibrium $x_e$ is stable in $\mathcal{D}$ if, for every $\varepsilon>0$, there exists a $\delta>0$ such that
	\begin{align}\label{def-stability}
		\left(x_0\in \mathcal{D} \text{ and } \norm{x(0)-x_e}<\delta \right)\Longrightarrow \left(\norm{x(t)-x_e}<\varepsilon \;\;\mbox{for all}\;\; t>0\right).
	\end{align}
The equilibrium $x_e$ is unstable in $\mathcal{D}$ if it is
not stable  in $\mathcal{D}$. It is an attractor in $\mathcal{D}$ if there exists $\eta>0$ such that,  for every initial data $x(0)$ in $\mathcal{D}$ satisfying $\norm{x(0)-x_e}<\eta$, $x(t)\to x_e$ as $t\to \infty$. It is a global attractor  in $\mathcal{D}$ if, for every initial data in $\mathcal{D}$, $x(t)\to x_e$ as $t\to \infty$. It is locally asymptotically stable in $\mathcal{D}$ if it is both stable and an attractor in $\mathcal{D}$. Finally, it is globally asymptotically stable  in $\mathcal{D}$ if it is both stable and  a global attractor  in $\mathcal{D}$.
\end{definition}

The Jacobian of system \eqref{eq:S11E1}-\eqref{eq:S11E3} computed at the extinction equilibrium is
\begin{align}
	J(\textbf{0})=\begin{pmatrix}
		-(\nu_E+\delta_E) &0&\beta_E\\
		(1-\nu)\nu_E&-\delta_M&0\\
		\nu\nu_E &0 & -\delta_F
	\end{pmatrix}\label{eq:jacobian}.
\end{align}
Its characteristic polynomial is
\begin{multline}
	P(\lambda)=\lambda^3 + (\nu_E+\delta_E +\delta_M+\delta_F)\lambda^2\\ + ((\nu_E+\delta_E)\delta_F-\beta_E\nu\nu_E  + \delta_M(\nu_E+\delta_E))\lambda +\delta_M((\nu_E+\delta_E)\delta_F-\beta_E\nu\nu_E).
\end{multline}
Its roots are $-\delta_M$ and the roots of equation
\begin{align}
	\lambda^2 + (\nu_E+\delta_E +\delta_F)\lambda + \delta_F(\nu_E+\delta_E)(1-\mathcal{R}_0) = 0
\end{align}
If $\mathcal{R}_0<1$, all eigenvalues of $J(\textbf{0})$ are either negative or have negative real parts, which implies that  $\textbf{0}$ is locally asymptotically stable. If $\mathcal{R}_0=1$ the eigenvalues of $J(\textbf{0})$  are $-\delta_M$, $0$, and $-\left(\nu_E+\delta_E+\delta_F\right)<0$. If $\mathcal{R}_0>1$,  the eigenvalues of $J(\textbf{0})$ are all real, one is strictly positive, two are strictly negative.

The global stability properties of the extinction equilibrium $\textbf{0} = (0, 0, 0)^T$ are described in terms of the basic offspring number $\mathcal{R}_0$ of the population. This is a key parameter in the theory of population dynamics. Depending on its value, more precisely if and only if $\mathcal{R}_0> 1$, there exists a non-trivial equilibrium point (see \cite{yang2009assessing,almeida2022optimal}).
The essential properties of the model \eqref{eq:S11E1}-\eqref{eq:S11E3}  are summarized in the following theorem similar to \cite[Theorem 7]{anguelov2012mathematical} and \cite[Theorem 1]{anguelov2020sustainable}.
\begin{theorem}\label{th-as-without-Ms} The following properties hold.
	\begin{enumerate}[label=\textbf{(P.\arabic*)}]
\item\label{ithR0} If  $\mathcal{R}_0\leq 1$,  then   $\textbf{0}\in \RR^3$  is a globally asymptotically  stable equilibrium in $\mathcal{D}$ for \eqref{eq:origine};
\item \label{iithR0}  If  $\mathcal{R}_0>1$, then the system has two equilibria $\textbf{0}$ and $x^*$ in $\mathcal{D}$, where $x^*$ is stable with basin of attraction $\mathcal{D}\setminus\{x=(E, M, F)^T\in \RR^3_+ : E=F=0\}$ and $\textbf{0}$ is unstable in $\mathcal{D}$ with the non negative $M-axis$ being a stable manifold.
	\end{enumerate}
\end{theorem}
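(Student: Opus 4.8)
The plan is to exploit the triangular structure of the system: the pair $(E,F)$ evolves autonomously through \eqref{eq:S11E1} and \eqref{eq:S11E3}, while $M$ is governed by \eqref{eq:S11E2}, a scalar linear equation $\dot M=(1-\nu)\nu_E E-\delta_M M$ driven by $E(t)$ with the stable pole $-\delta_M$. Thus once the asymptotics of $E(t)$ are known, $M(t)$ follows from the variation-of-constants formula: if $E(t)\to E_\infty$ then $M(t)\to(1-\nu)\nu_E E_\infty/\delta_M$. I would therefore first analyse the planar subsystem in $(E,F)$ and recover $M$ at the end. A preliminary fact needed throughout is that the strip $\{0\le E\le K,\ F\ge 0\}$ is forward invariant and globally attracting for the planar flow: on $E=K$ one has $\dot E=-(\nu_E+\delta_E)K<0$, while $\dot F\le\nu\nu_E K-\delta_F F$ bounds $F$, so all trajectories are bounded. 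On this strip the planar system is cooperative, since $\partial\dot E/\partial F=\beta_E(1-E/K)\ge 0$ and $\partial\dot F/\partial E=\nu\nu_E>0$.

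For \ref{ithR0} I would use the linear Lyapunov candidate $V=aE+bF$ with $a,b>0$. A direct computation gives $\dot V=[a\beta_E(1-E/K)-b\delta_F]F+[b\nu\nu_E-a(\nu_E+\delta_E)]E$, and since $1-E/K\le 1$ it suffices to choose $a,b$ with $a\beta_E\le b\delta_F$ and $b\nu\nu_E\le a(\nu_E+\delta_E)$. Such a pair exists precisely when $\beta_E\nu\nu_E\le\delta_F(\nu_E+\delta_E)$, i.e. when $\mathcal{R}_0\le 1$; this is exactly why the threshold appears. When $\mathcal{R}_0<1$ the inequalities are strict and $\dot V\le -c(E+F)$, giving global asymptotic stability of the origin in the plane. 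When $\mathcal{R}_0=1$ the choice is forced and $\dot V=-(a\beta_E/K)EF\le 0$; I would then invoke LaSalle's invariance principle, checking that the largest invariant set contained in $\{EF=0\}$ is $\{E=F=0\}$ (on $E=0$, $\dot E=\beta_E F$ forces $F=0$, and symmetrically), with boundedness of trajectories making LaSalle applicable. Finally $E(t)\to 0$ gives $M(t)\to 0$, and stability of the full origin follows because $V$ controls $(E,F)$ and then $M$ stays small, yielding \ref{ithR0}.

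For \ref{iithR0} I would first confirm that $\mathbf{0}$ and $x^*$ are the only equilibria (substitute $F=\nu\nu_E E/\delta_F$ into $\dot E=0$ and factor out $E$), with $x^*\in\mathcal{D}$ iff $\mathcal{R}_0>1$. A local analysis shows the planar Jacobian at $x^*$ has negative trace and positive determinant, so $x^*$ is locally asymptotically stable, while at $\mathbf{0}$ the planar linearization is a saddle (the product of its two eigenvalues equals $\delta_F(\nu_E+\delta_E)(1-\mathcal{R}_0)<0$). Since the planar system is cooperative and all trajectories are bounded, Poincar\'e--Bendixson together with the absence of periodic orbits in planar monotone systems forces every $\omega$-limit set to be one of the two equilibria. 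The decisive computation is the sign of the stable eigendirection at $\mathbf{0}$: writing the characteristic quadratic as $q(\lambda)$ and evaluating $q(-\delta_F)=-\delta_F(\nu_E+\delta_E)\mathcal{R}_0<0$ places $\lambda_-<-\delta_F<\lambda_+$, whence from $v_3=\nu\nu_E v_1/(\delta_F+\lambda)$ the stable eigenvector has $E$- and $F$-components of opposite sign while the unstable one lies in the open positive quadrant. Consequently the stable manifold of $\mathbf{0}$ meets the closed positive quadrant only at $\mathbf{0}$ (a trajectory staying in the quadrant and tending to $\mathbf{0}$ would have to approach tangentially to a direction pointing out of it), so every planar trajectory with $(E,F)\ne(0,0)$ converges to $(E^*,F^*)$. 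Slaving $M$ then gives convergence to $x^*$; the exceptional set $\{E=F=0\}$ is exactly the invariant nonnegative $M$-axis, on which $\dot M=-\delta_M M$ drives the flow to $\mathbf{0}$, so the basin of $x^*$ is $\mathcal{D}\setminus\{E=F=0\}$ and the $M$-axis is the stable manifold of the equilibrium $\mathbf{0}$, which is unstable by its positive eigenvalue.

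I expect the heart of the argument to be this global convergence in \ref{iithR0}: ruling out that an interior trajectory is captured by the origin. The linear Lyapunov function no longer decreases once $\mathcal{R}_0>1$, so I rely instead on the planar monotone structure to eliminate cycles and on the eigenvector sign computation to show the origin's stable manifold cannot reach into $\mathcal{D}$. An alternative I would keep in reserve is a monotone-comparison argument: squeeze an arbitrary interior initial datum between a small sub-solution $\varepsilon v^+$ along the unstable eigendirection and a large constant super-solution, then use order preservation of the cooperative flow and monotone convergence to $x^*$. The remaining steps --- invariance of the strip, the LaSalle step at $\mathcal{R}_0=1$, and the slaving of $M$ --- I regard as routine.
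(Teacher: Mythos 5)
Your proof of \textbf{(P.1)} is essentially the paper's: the same reduction of $M$ by variation of constants, the same family of linear Lyapunov functions on the planar $(E,F)$ subsystem (the paper fixes $a=\delta_F$, $b=\beta_E$, which realizes your inequalities, with equality in the first one), and the same LaSalle step identifying $\{E=F=0\}$ as the largest invariant subset of $\{EF=0\}$. For \textbf{(P.2)}, however, your global argument is genuinely different. The paper follows Anguelov et al.: it applies Smith's order-interval theorem for monotone planar systems to $[\,\textbf{0},(E^*,F^*)^T]$ (every solution starting in the interval, endpoints excluded, converges to an endpoint), rules out the endpoint $\textbf{0}$ by the same opposite-sign eigenvector computation you perform, and then extends to all of $\mathcal{D}$ by squeezing an arbitrary initial datum between a point below $x^*$ and a point above $x^*$, the monotone convergence also furnishing the stability of $x^*$. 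You instead work directly with all bounded quadrant trajectories: boundedness via the invariant attracting strip $\{0\le E\le K,\ F\ge 0\}$, Poincar\'e--Bendixson plus the absence of periodic orbits for planar cooperative systems to force convergence to an equilibrium, and the stable manifold theorem at the hyperbolic saddle $\textbf{0}$ (your computation $q(-\delta_F)=-\beta_E\nu\nu_E=-\delta_F(\nu_E+\delta_E)\mathcal{R}_0<0$ correctly places $\lambda_-<-\delta_F<\lambda_+$) to exclude convergence to $\textbf{0}$; local stability of $x^*$ comes from trace/determinant of the Jacobian rather than from monotone convergence. Both routes are correct. Yours buys independence from Smith's theorem and from the squeezing step, at the price of relying on planarity and hyperbolicity; the paper's route is the one that carries over to higher-dimensional monotone systems. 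One point to tighten: Poincar\'e--Bendixson also allows $\omega$-limit sets that are polycycles (a homoclinic loop at $\textbf{0}$, or a cycle through $x^*$); these are excluded by facts you already have---no quadrant orbit converges to $\textbf{0}$ since its stable manifold exits the quadrant, and no orbit can have $\alpha$-limit $x^*$ since $x^*$ is asymptotically stable---or, more cleanly, by citing the Hirsch--Smith theorem that every bounded trajectory of a planar cooperative system converges to an equilibrium.
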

\begin{proof}
Let us first prove \ref{ithR0}. We could proceed as in the proof of \cite[Theorem 7 (i)]{anguelov2012mathematical} or  \cite[1) in Theorem 1]{anguelov2020sustainable} which are based on properties of monotone operators. We propose a different approach, now based on Lyapunov functions. Let $t\mapsto x(t)=(E(t),M(t),F(t))^T$ be a solution of \eqref{eq:origine} defined at time $0$ and such that $(E(0),M(0),F(0))^T\in \mathcal{D}$. One has
\begin{gather}
\label{value-M}
M(t)=e^{-\delta_Mt}M(0)+(1-\nu)\nu_E\int_0^te^{-\delta_M(t-s)}E(s)\;ds,
\end{gather}
which implies that
\begin{gather}
\label{value-M-stable}
M(t)\leq M(0)+ \frac{(1-\nu)\nu_E}{\delta_M}\sup\{E(s);\; s\geq 0\},
\end{gather}
\begin{multline}
M(t)\leq M(0)e^{-\delta_Mt}+ \frac{(1-\nu)\nu_E}{\delta_M}e^{-\delta_Mt/2} \max\{E(s);\; s\in[0, t/2]\} \\
+ \frac{(1-\nu)\nu_E}{\delta_M}\sup\{E(s);\; s\geq t/2\}.
\label{value-M-attractor}
\end{multline}
Inequality  \eqref{value-M-stable} shows that $\textbf{0}\in \RR^3$  is a  stable equilibrium in $\mathcal{D}$ for \eqref{eq:origine} if $\textbf{0}\in \RR^2$  is a  stable equilibrium in $[0,+\infty)^2$ for the subsystem in $(E,F)^T\in[0,+\infty)^2$:
\begin{align}
		&\dot{E} = \beta_E F \left(1-\frac{E}{K}\right) - \left( \nu_E + \delta_E \right) E,\label{eq:S211E1}  \\
		&\dot{F} =\nu\nu_E E -  \delta_F F. \label{eq:S211E3}
\end{align}
Inequality \eqref{value-M-attractor} shows that $\textbf{0}\in \RR^3$  is a  global attractor in $\mathcal{D}$ for \eqref{eq:origine} if $\textbf{0}\in \RR^2$  is a  global attractor in $[0,+\infty)^2$ for the subsystem \eqref{eq:S211E1}-\eqref{eq:S211E3} in $(E,F)^T\in[0,+\infty)^2$.

Hence, in order to prove \ref{ithR0}, it suffices to check that $\textbf{0}\in [0,+\infty)^2$ is globally asymptotically stable in $[0,+\infty)^2$ for the system \eqref{eq:S211E1}-\eqref{eq:S211E3}. To prove this last statement, let us consider the Lyapunov function $V:[0,+\infty)^2\rightarrow \RR$, $y=(E,F)^T\mapsto V(y)$,  defined by
\begin{gather}
V(y):=\delta_F E+ \beta_E F.
\end{gather}
Then,
\begin{gather}
\text{ V is of class $\mathcal{C}^1$},
\\
\label{EFV>0}
V(y)>V((0,0)^T)=0,\;  \forall y \in [0, +\infty)^2\setminus\{(0,0)^T\},
\\
\label{EFVinftyatinfty}
\text{$V(y)\to +\infty$  when $\norm{y}\to +\infty$ with $y \in [0,+\infty)^2$}.
\end{gather}
The time-derivative of $V$ along the trajectories of \eqref{eq:S211E1}-\eqref{eq:S211E3}  is
\begin{gather}
\label{EVdotV}
\dot V = - \left( \delta_F\left(\nu_E + \delta_E\right)-\beta_E\nu\nu_E \right) E -\frac{\delta_F\beta_E}{K}EF.
\end{gather}
Let us now assume that
\begin{gather}
\label{R0leq1}
\mathcal{R}_0\leq 1.
\end{gather}
From \eqref{EVdotV} and \eqref{R0leq1} one gets
\begin{gather}
\label{EVdotVleq}
\dot V \leq -\frac{\delta_F\beta_E}{K}EF\leq 0.
\end{gather}
We are going to conclude by using the LaSalle invariance principle. Let us assume that we have a trajectory $t\in \RR \mapsto y(t)=(E(t),F(t))^T\in[0,+\infty)^2$ of \eqref{eq:S211E1}-\eqref{eq:S211E3} such that
\begin{gather}
\label{V=0forallt}
\dot V(y(t))=0 \; \forall t\in \RR.
\end{gather}
Then, using \eqref{EVdotVleq},
\begin{gather}
\label{EF=0forallt}
E(t)F(t)=0 \; \forall t\in \RR.
\end{gather}
Let us assume that there exists $t_0\in\RR$ such that
\begin{gather}
\label{Et0not0}
E(t_0)\not =0.
\end{gather}
Then there exists $\varepsilon>0$ such that
\begin{gather}
\label{Etnot0}
E(t)\not =0 \; \forall t \in (t_0-\varepsilon, t_0+\varepsilon),
\end{gather}
which, together with \eqref{EF=0forallt}, implies that
\begin{gather}
\label{F(t)=0}
F(t) =0 \; \forall t \in (t_0-\varepsilon, t_0+\varepsilon).
\end{gather}
Differentiating \eqref{F(t)=0} with respect to time and using \eqref{eq:S211E3} we get
\begin{gather}
\label{E(t)=0}
E(t) =0 \; \forall t \in (t_0-\varepsilon, t_0+\varepsilon),
\end{gather}
in contradiction with \eqref{Etnot0}. Hence
\begin{gather}
\label{E=0-all-time}
E(t) =0 \; \forall t \in \RR.
\end{gather}
Differentiating \eqref{E=0-all-time} with respect to time and using \eqref{eq:S211E1} we get that
\begin{gather}
\label{F(t)=0-new}
F(t) =0 \; \forall t \in \RR.
\end{gather}
With the LaSalle invariance principle, this concludes the proof of \ref{ithR0}.
\begin{remark}
In the case where $\mathcal{R}_0<1$  a simple linear strict Lyapunov function for the full system \eqref{eq:origine} is given in Remark~\ref{rem-strict-Lyapunov}.
\end{remark}

Let us now prove \ref{iithR0}. We first note that one has the following lemma, whose proof is obvious  and is omitted.
\begin{lemma} Let $t\mapsto x(t)=(E(t),M(t),F(t))^T$ be a solution of \eqref{eq:origine} defined at time $0$ and such that $(E(0),M(0),F(0))^T\in \mathcal{D}$. Then it is defined on $[0,+\infty)$. Moreover, if $E(0)\geq K$, then there exists one and only one time $t_0\geq 0$ such that $E(t_0)=K$ and one has
\begin{gather}
E(t)<K\; \forall t> t_0.
\end{gather}
\end{lemma}

Thanks to this lemma we are allowed to assume that $E<K$, which we do from now on. We then follow the proof of \cite[Theorem 7 (ii)]{anguelov2012mathematical}. To prove the stability and basin of attraction of the non-trivial equilibrium $x^*$ we use \cite[Theorem 2.2 in Chapter 2]{smith1995monotone}. This theorem  applies to strongly monotone systems. The Jacobian \eqref{eq:jacobian} associated with \eqref{eq:origine} is not irreducible. Let us consider the subsystem for $E$ and $F$, that is \eqref{eq:S211E1}-\eqref{eq:S211E3}, which defines a dynamical system on $\RR_+^2$. Its Jacobian
		\begin{align}
			j((E,F)^T)=\begin{pmatrix}
				-(\nu_E+\delta_E)-\frac{\beta_E F}{K} & &\beta_E(1-\frac{E}{K})\\
				\nu\nu_E  & & -\delta_F
			\end{pmatrix}\label{eq:jacobian2}
		\end{align}
		is irreducible.  Considering the usual coordinate-wise comparison and applying  \cite[Theorem 2.2 in Chapter 2]{smith1995monotone} to the two dimensional interval
		\begin{align}
			\{(E,F)^T\in\RR_+^2: 0\leq E\leq E^*,0\leq F\leq F^*\},
		\end{align}
		it follows that every solution starting in this interval, excluding the end points $(0,0)^T$ and $(E^*,F^*)^T$, converges to one of the end points. 
		
		The Jacobian at $\textbf{0}=(0,0)^T$ is
		\begin{align}
			j(\textbf{0})=\begin{pmatrix}
				-(\nu_E+\delta_E) & &\beta_E\\
				\nu\nu_E  & & -\delta_F
			\end{pmatrix}\label{eq:jacobian00}
		\end{align}
Its   characteristic equation is
		\begin{align}
			\lambda^2 + (\delta_F+\nu_E+\delta_E)\lambda + \delta_F(\nu_E+\delta_E)-\beta_E\nu\nu_E=0,
		\end{align}
		whose discriminant is
		\begin{align}
			\Delta = (\nu_E+\delta_E-\delta_F)^2+4\beta_E\nu\nu_E\geq 0.
		\end{align}
The eigenvalues are 
\begin{gather}
	\lambda_-:= -\frac{(\delta_F+\nu_E+\delta_E) +\sqrt{\Delta}}{2}\\
	\lambda_+:= \frac{-(\delta_F+\nu_E+\delta_E) +\sqrt{\Delta}}{2}
\end{gather}
Therefore,  since $\mathcal{R}_0>1$,  $\lambda_+ >0$ and  so $\textbf{0}$ is unstable. Since $j(\textbf{0})$ is a Metzler matrix, it has a strictly positive eigenvector corresponding to the positive eigenvalue  $\lambda_+ >0$, which is 
\begin{gather}
	v_+ = \begin{pmatrix}
		1\\
		 \frac{(\nu_E+\delta_E-\delta_F) +\sqrt{\Delta}}{2\beta_E}
	\end{pmatrix}
\end{gather}
Moreover, the eigenvector corresponding to the negative eigenvalue $\lambda_-$ is
    \begin{gather}
    v_- = \begin{pmatrix}
        1\\
         \frac{(\nu_E+\delta_E-\delta_F) -\sqrt{\Delta}}{2\beta_E}
    \end{pmatrix}
\end{gather}
which has two components with opposite signs and is thus biologically meaningless.  Hence, no solution converges to $\textbf{0}=(0,0)^T$ except the trivial solution which is identically equal to $\textbf{0}=(0,0)^T$. Therefore, every nontrivial solution converges to $(E^*,F^*)^T$. The implication for the three dimensional system \eqref{eq:S11E1}-\eqref{eq:S11E3} is that all solutions starting in the interval $[\textbf{0},x^*],$ excluding  the $M$-axis, converge to $x^*=(E^*,M^*,F^*)^T$.

		Using the same argument as in \cite{anguelov2020sustainable}, any solution starting at a point larger than $x^*$ converges to $x^*$. Since any point in  $ \mathcal{D}\setminus\{x=(E, M, F)^T\in \RR^3_+ : E=F=0\}$ can be placed between a point below $x^*$, but not on the $M$-axis, and a point above $x^*$, every solution starting in $ \mathcal{D}\setminus\{x=(E, M, F)^T\in \RR^3_+ : E=F=0\}$  converges to $x^*$. The monotone convergence of the solutions initiated below and above $x^*$ implies the stability of $x^*$ as well. This concludes the proof of \ref{iithR0} and of Theorem~\ref{th-as-without-Ms}.
	\end{proof}
	\subsection{SIT model in mosquito population dynamics }
The SIT model obtained neglecting the Allee effect from the one presented in \cite{strugarek2019use} is
	\begin{align}
		&\dot{E} = \beta_E F \left(1-\frac{E}{K}\right) - \big( \nu_E + \delta_E \big) E,\label{eq:S1E1}  \\
		&\dot{M} = (1-\nu)\nu_E E - \delta_M M, \label{eq:S1E2} \\
		&\dot{F} =\nu\nu_E E \frac{M}{M+\gamma_s M_s} - \delta_F F, \label{eq:S1E3} \\
		&\dot{M}_s = u - \delta_s M_s, \label{eq:S1E4}
	\end{align}
	where $M_s(t)\geq 0$ is the density of sterilized adult males,  $\delta_s>0$ is the death rate of sterilized adults, $u\geq 0$ is the control which is the density of sterile males released at time $t$, and $0<\gamma_s\leq1$ accounts for the fact that females may have a preference for fertile males. Then, the probability that a female mates with a fertile male is ${M}/{(M + \gamma_sM_s)}$. From now on we assume that
\begin{gather}
\label{deltas>deltaM}
\delta_s\geq \delta_M,
\end{gather}
which is a biologically relevant assumption (and even if this were not so, the sterile males would have a competitive advantage due to a higher longevity that would make SIT more efficient).

Let $\mathcal{D}' := [0,+\infty)^4$. When applying a feedback law $u:\mathcal{D}'\rightarrow [0,+\infty)$, the closed-loop system is  the  system
\begin{equation}
	\dot{x}= H(x,u(x)),
			\label{eq:closed-loop}
\end{equation}
		where
		\begin{equation}
			H(x,u) = \left( \begin{array}{ccc} \beta_E F \left(1-\frac{E}{K}\right) - \big( \nu_E + \delta_E \big) E\\  (1-\nu)\nu_E E - \delta_M M\\  \nu\nu_E E \frac{M}{M+\gamma_s M_s} - \delta_F F\\ u -\delta_sM_s
			\end{array}
			\right).	
		\end{equation}
Concerning the regularity of the feedback law, we always assume that
\begin{gather}
\label{reg-u_linfty}
u\in L^\infty_{\text{loc}}(\mathcal{D}').
\end{gather}
Note that, even if $u$ is of class $\mathcal{C}^\infty$, the map $x\in \mathcal{D}'\mapsto H(x,u(x))\in \RR^4$ is not continuous and one needs to specify the definition of the solutions for the closed-loop system \eqref{eq:closed-loop}. Carath\'{e}odory solutions  seem to be natural candidates. Roughly speaking, Carath\'{e}odory
solutions are absolutely continuous curves that satisfy the integral version of the differential equation. These solutions are indeed useful in other contexts. However, if they can lead to robustness for small errors on the control, as shown in \cite{2002-Ancona-Bressan-SICON}, they may not be robust with respect to arbitrary small measurement errors on the state, which is crucial for the application. To have a robustness with respect to arbitrary small measurement errors on the state, as shown in \cite{1967-Hermes-AP} (see also \cite{1994-Coron-Rosier-JMSEC}), the good definition of the solutions for the closed-loop system \eqref{eq:closed-loop} are the Filippov solutions, i.e. the solution of
\begin{gather}
\label{def:Filippov}
\dot x \in Y(x) := \underset{\varepsilon>0}{\mathbin{\scaleobj{1.75}{\cap}}}
\underset{N\in \mathcal{N}}{\mathbin{\scaleobj{1.75}{\cap}}}\overline{\text{conv}}\left[X\left(\left((x+\varepsilon B)\cap \mathcal{D}'\right)\setminus N\right)\right],
\end{gather}
where
\begin{itemize}
\item $B$ is the unit ball of $\RR^4$;
\item for a set $A$, $\overline{\text{conv}}[A]$ is the smaller closed convex set containing $A$;
\item $\mathcal{N}$ is the set of subsets of $\RR^4$ of zero Lebesgue measure.
\item $X(x):= H(x, u(x)).$
\end{itemize}
Let us recall that $x:I\subset \RR\rightarrow \RR^4$, $t\in I \mapsto x(t)\in \RR^4$ (where $I$ is an interval of $\RR$) is a solution of
\eqref{def:Filippov} if $x\in W^{1,\infty}_{\text{loc}}(I)$ and is such that
\begin{gather}
\dot x(t) \in Y(x(t)) \text{ for almost every } t\in I.
\end{gather}
 For references about Filippov solutions, let us mention, in particular, \cite{1960-Filippov-MS, 1988-Filippov-book}
and \cite[Chapter 1]{2005-Bacciotti-Rosier-book}. For the definition of stability, global attractor and asymptotic stability, we use again Definition~\ref{def-stab} (with $\mathcal{D}'$ instead of $\mathcal{D}$) and take now  into account all the solutions in the Filippov sense in this definition. The motivation for using Filippov solutions is given in  \cite[Proposition 1.4]{1994-Coron-Rosier-JMSEC}. The global asymptotic stability in this Filippov sense implies the existence of a Lyapunov function \cite{1998-Clarke-Ledyaev-Stern-1998}; see also \cite[Lemma 2.2]{1994-Coron-Rosier-JMSEC}. This   automatically gives some robustness properties with respect to (small) perturbations (including small measurement errors on the state), which is precisely the goal of feedback laws. In fact, for many feedback laws constructed in this article, an explicit Lyapunov function will be given, which allows to quantify this robustness.

Let us emphasize that in our case the Filippov solutions of our closed-loop system have the  following properties
\begin{gather}
\label{0donne0}
\left((E(0),F(0)) =(0,0) \right)\Longrightarrow\left((E(t),F(t))=(0,0)\; \forall t\geq 0\right),
\\
\label{tout-de-suite>0}
\left((E(0),F(0))\not =(0,0) \right)\Longrightarrow\left(E(t)>0, \, M(t)>0,\, F(t)>0 \; \forall t> 0\right).
\end{gather}
 From now on, the solutions of the closed-loop systems considered in this article are always the Filippov solutions.
	
	\begin{proposition}[See\cite{almeida2022optimal}:
		Stability properties of the system \eqref{eq:S1E1}-\eqref{eq:S1E4}]
\label{prop-constant-feedback}
Let us assume that
\begin{gather}
\label{R0>1}
\mathcal{R}_0>1.
\end{gather}
Then the following properties hold.
\begin{enumerate}
\item If $u=0,$ we have two equilibria:
			\begin{itemize}
				\item the extinction equilibrium $\textbf{0}$, where $E=F=M=M_s=0$ ,which is linearly unstable;
				\item the persistence equilibrium
				\begin{align}
					&{E^*} = K(1 -\frac{1}{\mathcal{R}_0}),\label{eq:equilibreE-2}  \\
&{M^*} = \frac{(1-\nu)\nu_E}{\delta_M}{E^*}\label{eq:equilibreM-2},\\
&{F^*} = \frac{\nu\nu_E}{\delta_F}E^*,\\
&M_s^*=0, \label{eq:equilibreMs-2}
				\end{align}
				which is locally asymptotically stable.
			\end{itemize}
			\item If $u\geq 0$, then the corresponding solution $(E,M,F,M_s)$ to System \eqref{eq:S1E1}-\eqref{eq:S1E4} enjoys the following stability property:
			\begin{align}
				\left\{
				\begin{aligned}
					&E(0)\in(0,E^*], \\
					& M(0)\in(0,M^*], \\
					&F(0)\in(0,F^*], \\
					& M_s(0)\geq 0,
				\end{aligned}
				\right.\implies \left\{
				\begin{aligned}
					&E(t)\in(0,E^*],  \\
					& M(t)\in(0,M^*], \\
					&F(t)\in(0,F^*], \\
					& M_s(t)\geq 0,
				\end{aligned}
				\right. \;\;\mbox{for all}\; \;t\geq 0.\label{eq:monotony}
			\end{align}
			Let
			\begin{align}
				U^* = \mathcal{R}_0\frac{K(1-\nu)\nu_E\delta_s}{4\gamma_s\delta_M}(1-\frac{1}{\mathcal{R}_0})^2.
			\end{align}
			If $u(\cdot)$ denotes a constant control function equal to some $\overline{U}>U^*$ for all $t\geq 0$, then the corresponding solution  $(E(t), M(t), F(t), M_s(t))$ converges to $(0,0,0,\overline{U}/\delta_s)$ as $t\to\infty$.
		\end{enumerate}
	\end{proposition}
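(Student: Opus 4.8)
The plan is to treat the two items separately, reducing as much as possible to the uncontrolled analysis already established in Theorem~\ref{th-as-without-Ms}. For item~1, setting $u=0$ the last equation forces $M_s=0$, and on the invariant face $\{M_s=0\}$ the fertility fraction $M/(M+\gamma_s M_s)$ equals $1$, so the system collapses to \eqref{eq:S11E1}--\eqref{eq:S11E3}; hence the only equilibria are $\textbf{0}$ and $x^*$ (with $M_s^*=0$ appended). The instability of $\textbf{0}$ is inherited from the three-dimensional subsystem on $\{M_s=0\}$, which is unstable by property~\ref{iithR0} of Theorem~\ref{th-as-without-Ms} (positive eigenvalue $\lambda_+$). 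For the local asymptotic stability of $x^*$ I would linearize the full field at $x^*$: since $M_s^*=0$ makes the fraction smooth there with vanishing $M$-derivative, the Jacobian is block upper-triangular, with an $(E,M,F)$ block equal to the reduced Jacobian $J_{\mathrm{red}}(x^*)$ and a trailing scalar block $-\delta_s$. Because $M$ is purely downstream in $J_{\mathrm{red}}(x^*)$, its spectrum is $\{-\delta_M\}$ together with that of the $(E,F)$ block, whose trace is negative and whose determinant equals $\beta_E\delta_F F^*/K>0$ after using $1-E^*/K=1/\mathcal{R}_0$ and $\nu\nu_E E^*=\delta_F F^*$; hence $J_{\mathrm{red}}(x^*)$ is Hurwitz and so is the full Jacobian.

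For the invariance property \eqref{eq:monotony} I would check the sign of the vector field on each face of the box $(0,E^*]\times(0,M^*]\times(0,F^*]\times[0,+\infty)$. Positivity of $E,M,F$ comes from \eqref{tout-de-suite>0}. On $\{E=E^*\}$, using $F\le F^*$, $1-E^*/K>0$ and the equilibrium relation for $E^*$ one gets $\dot E\le 0$; on $\{M=M^*\}$, using $E\le E^*$ and the relation for $M^*$ one gets $\dot M\le 0$; on $\{F=F^*\}$, bounding the fraction by $1$ and using $E\le E^*$ with $\nu\nu_E E^*=\delta_F F^*$ gives $\dot F\le 0$; finally on $\{M_s=0\}$ one has $\dot M_s=u\ge 0$. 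Thus the box is positively invariant.

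For the convergence under a constant control $\overline U>U^*$, first note $M_s$ solves a scalar linear ODE, so $M_s(t)\to \overline U/\delta_s=:\bar M_s$, and for every $\varepsilon>0$ there is $t_0$ with $M_s(t)\ge \bar M_s-\varepsilon$ for $t\ge t_0$. The key structural observation is that, freezing $M_s$ at a constant $m$, the $(E,M,F)$ system is cooperative (all off-diagonal Jacobian entries are $\ge 0$), and its nontrivial equilibria correspond to the roots $E\in(0,K)$ of $cE[\mathcal{R}_0-1-\mathcal{R}_0E/K]=\gamma_s m$ with $c=(1-\nu)\nu_E/\delta_M$. The left-hand side is a concave parabola maximized at $E=E^*/2$ with maximal value $cK(\mathcal{R}_0-1)^2/(4\mathcal{R}_0)$; a direct computation shows this equals $\gamma_s U^*/\delta_s$, so the frozen system has \emph{no} positive equilibrium precisely when $m>U^*/\delta_s$ --- this is exactly where the factor $1/4$ and the square in the definition of $U^*$ come from.

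To finish I would run a monotone comparison. Since $M/(M+\gamma_s M_s)$ is decreasing in $M_s$, once $M_s(t)\ge m:=\bar M_s-\varepsilon$ the true trajectory is a subsolution of the frozen cooperative system with parameter $m$, hence is dominated by the frozen flow issued from the same point. Choosing $\varepsilon$ small enough that $m>U^*/\delta_s$ (possible since $\overline U>U^*$ strictly), the frozen system has $\textbf{0}$ as its unique equilibrium and, being cooperative and dissipative, all its trajectories converge to $\textbf{0}$ (\cite{smith1995monotone}); squeezing by positivity forces $(E,M,F)(t)\to \textbf{0}$, and with $M_s\to\bar M_s$ this gives the claim. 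For initial data outside the box one first uses $\dot E<0$ on $\{E\ge K\}$ and the resulting ultimate bounds on $F$ and $E$ to enter a compact box before applying the comparison. The main obstacle is precisely this last step: turning the algebraic non-existence of a positive equilibrium into global convergence of the genuinely non-autonomous closed-loop system, i.e.\ justifying the monotone comparison together with the ``unique equilibrium $\Rightarrow$ global attractivity'' statement for the frozen cooperative flow.
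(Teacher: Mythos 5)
The paper itself contains no proof of this proposition: it is quoted from \cite{almeida2022optimal} (hence the ``See \cite{almeida2022optimal}'' in the header), so there is no internal argument to compare against and your attempt must be judged on its own merits and against that reference. Most of it holds up. Item 1 is correct: with $u=0$ the face $\{M_s=0\}$ is invariant and carries the wild dynamics \eqref{eq:S11E1}--\eqref{eq:S11E3}, so instability of $\textbf{0}$ is inherited from \ref{iithR0} of Theorem~\ref{th-as-without-Ms}; and your linearization at the persistence equilibrium is sound, since $M_s^*=0$ makes $M/(M+\gamma_s M_s)$ smooth there with vanishing $M$-derivative, the Jacobian is block upper-triangular with trailing entry $-\delta_s$, and the determinant of the $(E,F)$ block is indeed $\beta_E\delta_F F^*/K>0$, so the full Jacobian is Hurwitz. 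The face-by-face verification of \eqref{eq:monotony} is also correct. So is the computation identifying $\gamma_s U^*/\delta_s$ with the maximum of $cE\,[\mathcal{R}_0-1-\mathcal{R}_0E/K]$: this is exactly the mechanism behind the factor $1/4$ and the square in $U^*$, and it matches the structure of the argument in \cite{almeida2022optimal}.

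The genuine gap is the one you flag yourself, and it matters because the justification you give is not a theorem: ``cooperative and dissipative with a unique equilibrium'' does not by itself imply that every trajectory of the frozen flow converges to $\textbf{0}$ (Hirsch-type results give only generic convergence, and three-dimensional cooperative systems may carry periodic orbits). The standard way to close it, fully consistent with your setup, is the following. The box $B=[0,E^*]\times[0,M^*]\times[0,F^*]$ is positively invariant for the frozen system as well (same face computations, the $F$-face being even easier since the fraction is smaller). At the top corner $q=(E^*,M^*,F^*)$ the frozen field $g$ satisfies $g(q)\leq 0$ componentwise: the $E$- and $M$-components vanish by the equilibrium relations and the $F$-component is strictly negative once $m>0$. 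For a cooperative field this implies that the frozen trajectory issued from $q$ is non-increasing in $t$; being bounded below it converges to an equilibrium in $B$, which must be $\textbf{0}$ because for $m>U^*/\delta_s$ no other equilibrium exists (any equilibrium with $E=0$ forces $M=F=0$). Then, for initial data in the box --- which is the regime the proposition addresses, via \eqref{eq:monotony} --- the true trajectory at time $t_0$ satisfies $(E,M,F)(t_0)\leq q$, and the Kamke--M\"{u}ller comparison theorem (applicable because $g$ is quasimonotone on $\{E\leq K\}$ and dominates the time-dependent true field once $M_s(t)\geq m$) squeezes $(E,M,F)(t)$ between $\textbf{0}$ and the reference trajectory from $q$, giving convergence to $\textbf{0}$, hence the claim together with $M_s(t)\to\overline{U}/\delta_s$. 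With this replacement of your final step, the argument is complete.
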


Concerning the global asymptotic stability of $\textbf{0}$ for the system \eqref{eq:S1E1}-\eqref{eq:S1E4} in $\mathcal{D}':=[0,+\infty)^4$, using a Lyapunov approach, one can get  the following theorem.
\begin{theorem}\label{th-u=0-with-Ms}
	Let $u=0$. If  $\mathcal{R}_0<1$,  then   $\textbf{0}$  is globally asymptotically  stable in $\mathcal{D}'$ for the system \eqref{eq:S1E1}-\eqref{eq:S1E4}.
\end{theorem}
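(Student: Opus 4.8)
The plan is to exhibit an explicit linear strict Lyapunov function for the full four-dimensional system \eqref{eq:S1E1}--\eqref{eq:S1E4} with $u=0$, in the same spirit as the linear function of Remark~\ref{rem-strict-Lyapunov} for \eqref{eq:origine}, the only new ingredients being the extra equation for $M_s$ and the presence of the fertilization factor $M/(M+\gamma_s M_s)$. With $u=0$ the $M_s$-equation \eqref{eq:S1E4} decouples and is dissipative ($\dot M_s=-\delta_s M_s$), so I would simply incorporate a term in $M_s$ into the candidate. The crucial observation is that, since $M/(M+\gamma_s M_s)\in[0,1]$ wherever it is defined, the growth term driving $F$ in \eqref{eq:S1E3} is bounded above by its counterpart $\nu\nu_E E$ in the SIT-free system; hence the computation reduces, up to an inequality that only helps, to the one for \eqref{eq:origine}.

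Concretely, I would set
\begin{gather}
W(E,M,F,M_s):= a E + b M + c F + d M_s,
\end{gather}
with constants $a,b,c,d>0$ to be chosen. On $\mathcal{D}'=[0,+\infty)^4$ this $W$ is continuous, vanishes only at $\textbf{0}$, is positive elsewhere, and is radially unbounded, so it is a legitimate Lyapunov candidate. Differentiating along \eqref{eq:S1E1}--\eqref{eq:S1E4} with $u=0$ and bounding $M/(M+\gamma_s M_s)\le 1$ gives
\begin{multline}
\dot W \le \big(-a(\nu_E+\delta_E)+b(1-\nu)\nu_E+c\,\nu\nu_E\big)E\\
+\big(a\beta_E-c\delta_F\big)F - b\delta_M M - d\delta_s M_s - \frac{a\beta_E}{K}EF.
\end{multline}
The borderline choice $a=\delta_F$, $c=\beta_E$, $b=d=0$ makes the $F$- and $M$-coefficients vanish and turns the $E$-coefficient into $-\delta_F(\nu_E+\delta_E)(1-\mathcal{R}_0)$, which is strictly negative precisely because $\mathcal{R}_0<1$; this is where the strict inequality is essential.

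I would then perturb: take $a=\delta_F(1-\eta)$, $c=\beta_E$, and $b,d>0$, so that the $F$-coefficient becomes $-\eta\delta_F\beta_E<0$, the $M$- and $M_s$-coefficients become $-b\delta_M<0$ and $-d\delta_s<0$, and the $E$-coefficient equals $-\delta_F(\nu_E+\delta_E)(1-\mathcal{R}_0)+\eta\delta_F(\nu_E+\delta_E)+b(1-\nu)\nu_E$, which stays strictly negative once $\eta$ and $b$ are small enough. With such a choice there exist constants $c_1,c_2,c_3,c_4>0$ with
\begin{gather}
\dot W \le -\,c_1 E - c_2 F - c_3 M - c_4 M_s \le 0,
\end{gather}
the right-hand side being strictly negative at every point of $\mathcal{D}'\setminus\{\textbf{0}\}$. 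Thus $W$ is a strict Lyapunov function, and the usual Lyapunov theorem, in its radially unbounded positive-definite form, yields that $\textbf{0}$ is globally asymptotically stable.

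The main technical point to handle is that, because of the factor $M/(M+\gamma_s M_s)$, the vector field is discontinuous on $\{M=M_s=0\}$ and the solutions are understood in the Filippov sense \eqref{def:Filippov}. This turns out to be harmless here: for every $v\in Y(x)$ the $F$-component has the form $\nu\nu_E E\,\theta-\delta_F F$ with $\theta\in[0,1]$, so the bound leading to $\dot W\le -c_1E-c_2F-c_3M-c_4 M_s$ holds for all selections in the set-valued right-hand side, and the differential inequality remains valid along every Filippov solution for almost every $t$. Hence the strict decrease of $W$, and therefore the global asymptotic stability of $\textbf{0}$, persist in the Filippov framework. The only genuine difficulty is the simultaneous tuning of $(a,b,c,d)$ above, and it is exactly the hypothesis $\mathcal{R}_0<1$ that furnishes the slack needed to push all four linear coefficients strictly below zero.
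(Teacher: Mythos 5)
Your proposal is correct and follows essentially the same route as the paper: the paper's proof also uses a linear strict Lyapunov function, namely $V=\frac{1+\mathcal{R}_0}{1-\mathcal{R}_0}E+\frac{2\beta_E}{\delta_F(1-\mathcal{R}_0)}F+M+M_s$, exploits $\mathcal{R}_0<1$ to make all coefficients of $\dot V$ strictly negative (bounding the fertilization factor $M/(M+\gamma_s M_s)$ by $1$, exactly as you do), and deduces exponential decay $V(x(t))\leq V(x(0))e^{-c_0 t}$. The only cosmetic differences are your specific choice of coefficients (a perturbation of $(\delta_F,0,\beta_E,0)$ rather than the paper's) and that you dispose of the discontinuity set $\{M+M_s=0\}$ by noting the bound holds for every Filippov selection, whereas the paper uses a short case analysis based on the invariance properties \eqref{0donne0}--\eqref{tout-de-suite>0} and \eqref{Ms(0)>0Ms>0}--\eqref{x(0)=0givesx(t)=0}.
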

\begin{proof}
Let $x= (E,M,F, M_s)^T$. We are going to conclude by applying Lyapunov's second theorem.
To do so, a candidate  Lyapunov function  is $V: \mathcal{D}'\rightarrow\RR_+$, $x\mapsto V(x)$, defined by
\begin{equation}
\label{defV-u=0}
	V(x) :=\frac{1+\mathcal{R}_0}{1-\mathcal{R}_0} E +\frac{2 \beta_E}{\delta_F(1-\mathcal{R}_0)} F + M + M_s.
\end{equation}
Note that, since $\mathcal{R}_0<1$,
\begin{gather}
\label{V>0-sec2}
V(x)>V(\textbf{0})=0, \; \forall x\in \mathcal{D}'\setminus\{\textbf{0}\},
\\
\label{Vinfty-sec2}
V(x)\rightarrow +\infty \text{ as } |x|\rightarrow +\infty \text{ with } x\in \mathcal{D}'.
\end{gather}
Moreover, along the trajectories of \eqref{eq:S1E1}-\eqref{eq:S1E4},
\begin{multline}
\label{dotV-sec2}
	 \dot V (x)=-(\nu\nu_E+\delta_E)E -\frac{\beta_E}{K}\frac{1+\mathcal{R}_0}{1-\mathcal{R}_0}FE
-\delta_MM -\beta_E  F-\delta_sM_s\\
-\frac{2\beta_E\nu \nu_E}{\delta_F(1-\mathcal{R}_0)}\frac{\gamma_s M_s}{M+\gamma_s M_s} E\text{, if } M+M_s\not =0.
\end{multline}
 From \eqref{defV-u=0} and \eqref{dotV-sec2}, one gets
\begin{equation}
\label{dotV<-sec2c}
	 \dot V (x)\leq -c_0 V(x) \text{ if } M+M_s\not =0,
\end{equation}
with
\begin{equation}
\label{decay}
c_0 := \min \left\{ \frac{(\nu \nu_E +\delta_E) (1-\mathcal{R}_0)}{1+\mathcal{R}_0} , \frac{\delta_F (1-\mathcal{R}_0)}{2} ,  \delta_M , \delta_s
\right\}
\end{equation}

\noindent
Let us point out that, for every solution $t\mapsto x(t)=(E(t),M(t),F(t),M_s(t))^T$ of the closed-loop system \eqref{eq:S1E1}-\eqref{eq:S1E4} defined at time $0$ and such that $x(0)\in \mathcal{D}'$,
\begin{gather}
\label{Ms(0)>0Ms>0}
\left(M(0)+M_s(0)>0\right)\Longrightarrow \left(M(t)+M_s(t)>0, \; \forall t>0\right),
\\
\label{x(0)=0givesx(t)=0}
\left(x(0)=0\right)\Longrightarrow\left(x(t)=0, \; \forall t\geq 0\right).
\end{gather}
 From \eqref{0donne0}, \eqref{tout-de-suite>0}, \eqref{V>0-sec2}, \eqref{dotV<-sec2c}, \eqref{Ms(0)>0Ms>0} and \eqref{x(0)=0givesx(t)=0}, one has, for every solution $t\mapsto x(t)=(E(t),M(t),F(t),M_s(t))^T$ of the closed-loop system \eqref{eq:S1E1}-\eqref{eq:S1E4} defined at time $0$ and such that $x(0)\in \mathcal{D}'$,
 \begin{gather}
 V(x(t))\leq V(x(0)) e^{-c_0 t}\; \forall t\geq 0,
\end{gather}
which, together with  \eqref{V>0-sec2} and \eqref{Vinfty-sec2}, concludes the proof of Theorem~\ref{th-u=0-with-Ms} (and even shows the global exponential stability and provides an estimate on the exponential decay rate $c_0$ given by \eqref{decay}).
\end{proof}
\begin{remark}
\label{rem-strict-Lyapunov}
Note that Theorem~\ref{th-u=0-with-Ms} implies Theorem~\ref{th-as-without-Ms} in the case  $\mathcal{R}_0<1$ and our proof of Theorem~\ref{th-u=0-with-Ms} provides, for this case,  a (strict) Lyapunov function which is just
\begin{gather}
\tilde V((E,M,F)^T):=\frac{1+\mathcal{R}_0}{1-\mathcal{R}_0} E +\frac{2 \beta_E}{\delta_F(1-\mathcal{R}_0)} F + M.
\end{gather}
It would be interesting to provide Lyapunov functions for the two remaining cases  $\mathcal{R}_0=1$ and $\mathcal{R}_0>1$.

\end{remark}
\section{Global stabilization by feedback of the extinction equilibrium}
\subsection{Backstepping feedback}
\label{backsteppingSubsection}
 For the backstepping method, the control system  has the following structure:
\begin{align}
	&\dot{x}_1 = f(x_1,x_2),\label{eq:e1}\\&\dot{x}_2 = u - g(x_1,x_2), \label{eq:e2}
\end{align}
where the state is $x = (x_1,x_2)\in \RR^p\times\RR^m$ and the control is $u\in\RR^m$. The key and classical theorem for backstepping is the following one (see, for instance,  \cite[Theorem 19.2, page 110]{1992-Bacciotti-book} or \cite[Theorem 12.24, page 334]{coron2007control}).
\begin{theorem}\label{th-backstepping}
	Assume that $f$ and $g$ are of class $\mathcal{C}^{1}$ and that for the control system
	\begin{align}
		\dot{x}_1 = f(x_1,v),
	\end{align}
	where the state is $x_1\in\RR^p$ and the control is $v\in\RR^m$, ${\bf 0} \in\RR^p$ can be globally asymptotically stabilized by means of a  feedback law $x_1\in \RR^p \mapsto  v(x_1)\in \RR^m$ of class $\mathcal{C}^1$. Then,  for  the control system \eqref{eq:e1}-\eqref{eq:e2}, ${\bf 0} \in\RR^p\times\RR^m$ can be globally asymptotically stabilized by means of a continuous feedback law $x\in \RR^p\times\RR^m\mapsto  u(x)\in \RR^m$ .
\end{theorem}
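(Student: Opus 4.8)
The plan is to follow the classical Lyapunov-based backstepping argument. First I would invoke a converse Lyapunov theorem: since $f$ and $v$ are $\mathcal{C}^1$, the reduced closed-loop vector field $x_1 \mapsto f(x_1, v(x_1))$ is $\mathcal{C}^1$, and by hypothesis $\mathbf{0}$ is globally asymptotically stable for it. Kurzweil's theorem then furnishes a smooth, positive definite, radially unbounded function $W: \RR^p \to \RR$ satisfying $\nabla W(x_1) \cdot f(x_1, v(x_1)) < 0$ for all $x_1 \neq \mathbf{0}$. Note that $v(\mathbf{0}) = \mathbf{0}$, since $\mathbf{0}$ must be an equilibrium of the reduced closed loop.

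Next I would perform the backstepping change of variables $y := x_2 - v(x_1) \in \RR^m$, under which \eqref{eq:e1}-\eqref{eq:e2} become $\dot x_1 = f(x_1, v(x_1) + y)$ and $\dot y = u - g(x_1, x_2) - Dv(x_1)\, f(x_1, x_2)$. The candidate Lyapunov function for the full system is
\[
V(x_1, x_2) := W(x_1) + \tfrac{1}{2}\,\norm{x_2 - v(x_1)}^2.
\]
Because $(x_1, x_2) \mapsto (x_1, y)$ is a homeomorphism and $v$ is continuous (hence bounded on bounded sets), $V$ is positive definite and radially unbounded on $\RR^p \times \RR^m$; in particular it is proper in the original coordinates.

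The crucial computation is $\dot V$ along the closed loop. Using the $\mathcal{C}^1$ regularity of $f$ and the Hadamard-type identity $f(x_1, v(x_1)+y) - f(x_1, v(x_1)) = G(x_1, y)\, y$ with the continuous matrix
\[
G(x_1, y) := \int_0^1 \frac{\partial f}{\partial x_2}\big(x_1, v(x_1) + s y\big)\, ds,
\]
I obtain
\[
\dot V = \nabla W(x_1) \cdot f(x_1, v(x_1)) + \big(\nabla W(x_1)\, G(x_1, y)\big) y + y\tr \dot y.
\]
I would then cancel the indefinite cross term by defining the feedback
\[
u(x) := g(x_1, x_2) + Dv(x_1)\, f(x_1, x_2) - \big(\nabla W(x_1)\, G(x_1, x_2 - v(x_1))\big)\tr - \big(x_2 - v(x_1)\big),
\]
which yields $\dot V = \nabla W(x_1)\cdot f(x_1, v(x_1)) - \norm{y}^2 < 0$ for every $(x_1,x_2)\neq \mathbf{0}$. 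Continuity of $u$ is immediate, since each constituent ($g$, $Dv$, $f$, $\nabla W$ and $G$) is continuous. Lyapunov's theorem for a proper strict Lyapunov function then gives global asymptotic stabilization of $\mathbf{0}$.

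I expect the genuine obstacle to be the existence and regularity of $W$: global asymptotic stability alone does not directly hand us a Lyapunov function, so the argument rests essentially on the converse (Kurzweil) theorem to produce a \emph{smooth strict} $W$, after which the computation above is routine. A secondary point to verify carefully is the cancellation bookkeeping (in particular that $y\tr(\nabla W\, G)\tr = (\nabla W\, G)\,y$ as a scalar) and the properness of $V$ back in the $(x_1,x_2)$ variables, both of which are straightforward once $v$ is known to be continuous.
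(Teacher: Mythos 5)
The paper does not actually prove Theorem~\ref{th-backstepping}: it invokes it as a classical result, citing Bacciotti's book and Coron's book, and the proof in the latter (pages 334--335, which the paper explicitly points to when building its own feedback in Section~\ref{backsteppingSubsection}) is precisely the route you take --- a converse (Kurzweil) Lyapunov theorem for the reduced closed loop, the composite function $V(x_1,x_2)=W(x_1)+\tfrac12\norm{x_2-v(x_1)}^2$, the Hadamard factorization $f(x_1,v(x_1)+y)-f(x_1,v(x_1))=G(x_1,y)\,y$, and a feedback cancelling the cross term and adding the damping $-(x_2-v(x_1))$. Your computation of $\dot V$, the continuity of $u$, and the properness of $V$ in the original coordinates are all sound, so in substance your proposal coincides with the classical proof the paper relies on.

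There is, however, one inference that is genuinely wrong and needs repair: the claim that $v(\mathbf{0})=\mathbf{0}$ ``since $\mathbf{0}$ must be an equilibrium of the reduced closed loop.'' Equilibrium of $\dot x_1=f(x_1,v(x_1))$ at $\mathbf{0}$ only gives $f(\mathbf{0},v(\mathbf{0}))=\mathbf{0}$, not $v(\mathbf{0})=\mathbf{0}$. The condition $v(\mathbf{0})=\mathbf{0}$ is essential to your argument (it is what makes $V$ vanish exactly at $(\mathbf{0},\mathbf{0})$, and what makes $(\mathbf{0},\mathbf{0})$ an equilibrium of the full closed-loop system), and it cannot be derived from the hypotheses as literally stated. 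Indeed, take $p=m=1$, $f(x_1,x_2)=-x_1+x_2-1$, $g\equiv 0$, $v\equiv 1$: the reduced closed loop is $\dot x_1=-x_1$, so $\mathbf{0}$ is globally asymptotically stable, yet $f(0,0)=-1\neq 0$, so $(0,0)$ is not an equilibrium of \eqref{eq:e1}-\eqref{eq:e2} for any feedback and cannot be stabilized. The statement must therefore be read with the convention of the cited references (and of the paper), namely that a stabilizing feedback law is by definition required to take the equilibrium value at the equilibrium, so that $v(\mathbf{0})=\mathbf{0}$ is part of the hypothesis rather than a consequence. With that reading, the rest of your proof goes through as written.
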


Let $x:= (E, M, F)^T$. One way to rewrite the dynamics \eqref{eq:S1E1}-\eqref{eq:S1E4} is
\begin{equation}
	\left\{
	\begin{aligned}
		&\dot{x}= f(x,M_s),\\
		&\dot{M_s}  = u-\delta_sM_s,
	\end{aligned}
	\right.
\end{equation}
where
\begin{align}
	f(x,M_s) := \left( \begin{array}{ccc} \beta_E F \left(1-\frac{E}{K}\right) - \big( \nu_E + \delta_E \big) E\\  (1-\nu)\nu_E E - \delta_M M\\  \nu\nu_E E \frac{M}{M+\gamma_s M_s} - \delta_F F
	\end{array}
	\right).
\end{align}
As  $f$  is not of class $\mathcal{C}^1(\mathcal{D}\times [0,+\infty))$ and the feedback law has to  be non-negative, we cannot directly apply the backstepping theorem. However, to build the feedback law  we use the classical Lyapunov approach of the proof of Theorem~\ref{th-backstepping} (see, for example, \cite[pages 334--335]{coron2007control})  allowing us to select an appropriate control. Unfortunately, the control that we get with this approach is not positive all the time. To get around this, using the same Lyapunov function, we propose a new feedback law that is non-negative, decreases the Lyapunov function and leads to global asymptotic stability of the extinction equilibrium.

 First, consider the control system $\dot{x}= f(x,M_s)$ with  the state being $x\in\mathcal{D}$ and the control being  $M_s\in [0,+\infty)$. We assume that  $M_s$ is of the form $M_s = \theta M$ and  study the closed-loop system
\begin{align}
	\dot{x}= f(x,\theta M).
	\label{eq:backeq}
\end{align}
We have
\begin{equation}
	\left\{
	\begin{aligned}
		&\dot{E}  = \beta_E F \left(1-\frac{E}{K}\right) - \big( \nu_E + \delta_E \big) E,  \\
		&\dot{M} = (1-\nu)\nu_E E - \delta_M M,  \\
		&\dot{F} =
		\frac{\nu\nu_E}{1+\gamma_s \theta}E - \delta_F F.
	\end{aligned}
	\right.
	\label{eq:feedQ}
\end{equation}
It is a  smooth dynamical system on $\mathcal{D}=[0,+\infty)^3$ which is also a positively invariant set for this dynamical system. \\ Setting the right hand side of \eqref{eq:feedQ} to zero we obtain the equilibrium $\textbf{0}\in [0,+\infty)^3$ and the non-trivial equilibrium $x^{**}=(E^{**}, M^{**}, F^{**})$ given by
\begin{align}
&{E}^{**} = K(1 -\frac{1}{\mathcal{R}(\theta)}),\label{eq:equilibreE-3}
\\
& {M}^{**} = \frac{(1-\nu)\nu_E}{\delta_M}{E}^{**} \label{eq:equilibreM-3},
\\
&{F}^{**} = \frac{\nu\nu_E}{\delta_F(1+\gamma_s\theta)}{E}^{**},
\end{align}
where the offspring number is now

\begin{align}
	\mathcal{R}(\theta) := \frac{\beta_E\nu\nu_E}{\delta_F(1+\gamma_s\theta)(\nu_E+\delta_E)} =  \frac{\mathcal{R}_0}{1+\gamma_s\theta}\label{eq:basenumberteta}.
\end{align}
Note that if $\mathcal{R}(\theta)\leq 1$, $\textbf{0} \in \RR^3$ is the only equilibrium point of the system in $\mathcal{D}$.

Our next proposition shows that the feedback law $M_s=\theta M$ stabilizes our control system $\dot{x}= f((x^T,M_s)^T)$ if $\mathcal{R}(\theta)<1$.
\begin{proposition}
\label{prop-case-thetaM-GAS}
	Assume that
\begin{align}
	\mathcal{R}(\theta)<1 \label{eq:inequalitytheta}.
\end{align} Then $\textbf{0}$  is globally asymptotically stable in $\mathcal{D}$ for  system \eqref{eq:backeq}.
\end{proposition}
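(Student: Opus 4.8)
The plan is to exploit the fact that the closed-loop system \eqref{eq:backeq}, written explicitly in \eqref{eq:feedQ}, is structurally identical to the uncontrolled wild-population model \eqref{eq:S11E1}-\eqref{eq:S11E3}. Indeed, the $\dot E$ and $\dot M$ equations are left completely untouched, and the only change is that the recruitment coefficient $\nu\nu_E$ in the female equation is replaced by $\nu\nu_E/(1+\gamma_s\theta)$. Consequently the offspring number of \eqref{eq:feedQ} is exactly $\mathcal{R}(\theta)=\mathcal{R}_0/(1+\gamma_s\theta)$, as recorded in \eqref{eq:basenumberteta}, and the hypothesis $\mathcal{R}(\theta)<1$ plays precisely the role that $\mathcal{R}_0\leq 1$ played in Theorem~\ref{th-as-without-Ms}. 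So the whole argument reduces to transporting the proof of property~\ref{ithR0} to this modified coefficient.

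Concretely, I would first reuse the $M$-slaving step. Since the $\dot M$ equation is unchanged, the representation \eqref{value-M} and the resulting bounds \eqref{value-M-stable}-\eqref{value-M-attractor} hold verbatim, so that stability (resp. global attractivity) of $\textbf{0}$ for the three-dimensional system \eqref{eq:backeq} follows from the corresponding property for the planar $(E,F)$ subsystem extracted from \eqref{eq:feedQ},
\begin{align}
&\dot E = \beta_E F\left(1-\frac{E}{K}\right)-(\nu_E+\delta_E)E, \label{eq:EFbis1}\\
&\dot F = \frac{\nu\nu_E}{1+\gamma_s\theta}E-\delta_F F. \label{eq:EFbis2}
\end{align}
I would then run the Lyapunov/LaSalle argument of property~\ref{ithR0} on \eqref{eq:EFbis1}-\eqref{eq:EFbis2} with the same linear function $V(E,F)=\delta_F E+\beta_E F$, which still satisfies \eqref{EFV>0}-\eqref{EFVinftyatinfty}. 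A direct computation along \eqref{eq:EFbis1}-\eqref{eq:EFbis2} gives
\begin{gather}
\dot V = -\delta_F(\nu_E+\delta_E)\bigl(1-\mathcal{R}(\theta)\bigr)E-\frac{\delta_F\beta_E}{K}EF,
\end{gather}
which is $\leq 0$ because $\mathcal{R}(\theta)<1$ and vanishes only where $E=0$. Differentiating the relation $E\equiv 0$ on an invariant trajectory and using \eqref{eq:EFbis1} forces $F\equiv 0$, so the largest invariant set in $\{\dot V=0\}$ is $\{\textbf{0}\}$, and LaSalle's invariance principle yields global asymptotic stability of $\textbf{0}$ for the planar system, hence, via the $M$-slaving step, for \eqref{eq:backeq}.

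The only point that deserves care — and the main (minor) obstacle — is the justification for \emph{reusing} Theorem~\ref{th-as-without-Ms} rather than merely quoting it: that theorem is phrased in terms of the original biological parameters, whereas \eqref{eq:feedQ} modifies a single coefficient without rescaling the shared parameter $\nu$ in the $\dot M$ and $\dot F$ equations. One can either re-derive the proof for the modified coefficient, exactly as sketched above, or verify that \eqref{eq:feedQ} is a genuine instance of \eqref{eq:origine} through the reparametrization $\tilde\nu_E=\nu_E[(1-\nu)+\nu/(1+\gamma_s\theta)]$, $\tilde\delta_E=\nu_E+\delta_E-\tilde\nu_E>0$, $\tilde\nu=\nu/[(1+\gamma_s\theta)(1-\nu)+\nu]$, under which the offspring number becomes exactly $\mathcal{R}(\theta)$. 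I would favor the first route, since re-running the short Lyapunov argument is cleaner and sidesteps the need to check positivity and admissibility of the reparametrized constants.
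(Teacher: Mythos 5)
Your proof is correct, but it follows a genuinely different route from the paper's. The paper proves Proposition~\ref{prop-case-thetaM-GAS} by exhibiting a \emph{strict} linear Lyapunov function directly on the full three-dimensional state space, namely $V(x)=\frac{1+\mathcal{R}(\theta)}{1-\mathcal{R}(\theta)}E+M+\frac{2\beta_E}{\delta_F(1-\mathcal{R}(\theta))}F$, and computes $\dot V=-(\nu\nu_E+\delta_E)E-\delta_M M-\beta_E F-\frac{1+\mathcal{R}(\theta)}{1-\mathcal{R}(\theta)}\frac{\beta_E}{K}EF$, from which $\dot V\leq -cV$ for some $c>0$; this yields global \emph{exponential} stability with an explicit decay rate and needs no invariance argument. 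You instead reduce to the planar $(E,F)$ subsystem via the $M$-slaving bounds from the proof of Theorem~\ref{th-as-without-Ms} (valid verbatim since the $\dot M$ equation is untouched), and then run the non-strict Lyapunov function $\delta_F E+\beta_E F$ together with LaSalle, exactly as in the proof of \ref{ithR0} but with the recruitment coefficient $\nu\nu_E/(1+\gamma_s\theta)$; your computation of $\dot V$ and the invariance argument ($E\equiv 0\Rightarrow F\equiv 0$) are both correct, and your reparametrization remark even shows \eqref{eq:feedQ} is a genuine instance of \eqref{eq:origine} with offspring number $\mathcal{R}(\theta)$, so Theorem~\ref{th-as-without-Ms} could simply be quoted. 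What each approach buys: yours is more economical, recycling already-proved material and a simpler Lyapunov function, but it delivers only asymptotic (not exponential) convergence and no decay estimate; the paper's choice is not incidental, because the strict inequality $\dot V\leq -cV$ (i.e.\ \eqref{dotV<0}) and this specific $V$ are reused as the building block of the composite function $W$ in the backstepping proof of Theorem~\ref{thm-backstepping}, so the LaSalle route, while sufficient for the proposition itself, would not supply the object the paper needs downstream.
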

\begin{proof}
We apply Lyapunov's second theorem. To do so, we define

\begin{equation}
        \begin{aligned}
        &V:  x\in [0,+\infty)^3  \mapsto V(x)\in \RR_+, \\
	&V(x) := \frac{1+\mathcal{R}(\theta)}{1-\mathcal{R}(\theta)} E + M + \frac{2\beta_E}{\delta_F(1-\mathcal{R}(\theta))}F.
\end{aligned}
        \end{equation}

\noindent As \eqref{eq:inequalitytheta} holds,
\begin{gather}
\text{ V is of class $\mathcal{C}^1$},
\\
\label{V>0}
V(x)>V((0,0,0)^T)=0, \;\forall x \in [0, +\infty)^3\setminus\{(0,0,0)^T\},
\\
\label{Vinftyatinfty}
\text{$V(x)\to +\infty$  when $\norm{x}\to +\infty$ with $x\in\mathcal{D}$.}
\end{gather}
We have
\begin{equation}
	\dot{V}(x) = \nabla V(x)\cdot f(x, \theta M)= \begin{pmatrix}\frac{1+\mathcal{R}(\theta)}{1-\mathcal{R}(\theta)}\\  1 \\ \frac{2\beta_E}{\delta_F(1-\mathcal{R}(\theta))}
\end{pmatrix}^T
\cdot
\begin{pmatrix} \beta_E F \left(1-\frac{E}{K}\right) - a E\\  c E - \delta_M M\\   \frac{\nu\nu_E}{1+\gamma_s\theta} E - \delta_F F
\end{pmatrix}.
\end{equation}
 So
\begin{equation}
\label{dotV-value}
	\dot{V}(x) = -\beta_E F-\delta_M M -\frac{1+\mathcal{R}(\theta)}{1-\mathcal{R}(\theta)}\frac{\beta_E}{K} FE - (\nu\nu_E+\delta_E)E.
\end{equation}
Then, using once more \eqref{eq:inequalitytheta}, we get the existence of $c>0$ such that
\begin{align}
\label{dotV<0}
    \dot{V}(x) \leq -c V(x), \; \forall x \in [0, +\infty)^3.
\end{align}
This concludes the proof of Proposition~\ref{prop-case-thetaM-GAS}.
\end{proof}
Let us define
\begin{align}
	\psi := \frac{2\beta_E\nu\nu_E}{\delta_F(1-\mathcal{R}(\theta))(1+\gamma_s\theta)},
\end{align}
 and, for $\alpha${for $\alpha$ and $\beta_s$ (the latter having dimension of a rate )} chosen in $(0,+\infty)$, the map $G: \mathcal{D}':=[0,+\infty)^4\rightarrow \RR$, $(x^T,M_s)^T\mapsto G((x^T,M_s)^T)$ by
\begin{multline}
		G((x^T,M_s)^T):=	\frac{\gamma_s\psi E(\theta M + M_s)^2}{\alpha(M+\gamma_sM_s)(3\theta M + M_s)}\\
+ \frac{((1-\nu)\nu_E\theta E -\theta \delta_M M)(\theta M +3M_s)}{3\theta M + M_s}
\\ +\delta_sM_s + {\frac{\beta_s}{\alpha}}(\theta M-M_s) \text{, if } M+M_s\not=0,
\end{multline}
\begin{equation}
G((x^T,M_s)^T):=0 \text{, if } M+M_s=0.
\end{equation}
 Finally, let us define the feedback law $u: \mathcal{D}'\rightarrow [0,+\infty)$,
 $(x^T,M_s)^T\mapsto u((x^T,M_s)^T)$, by
\begin{align}
		u((x^T,M_s)^T):=\max\left(0,G((x^T,M_s)^T)\right). \label{eq:backcontr}
\end{align}	
Note that $u$, which is Lebesgue measurable,  is not continuous in $\mathcal{D}'$. However
\begin{gather}
\label{utens0-at-0}
\text{there exists $C>0$ such that } |u(y)|\leq C \|y\| \; \forall y\in \mathcal{D}'.
\end{gather}
Property \eqref{utens0-at-0} is important for the applications since it implies that the density $u$ of sterile males released is going to be small when the state is close to $\textbf{0}$. For instance, this is essential to reduce the number of mosquitoes necessary for a long term intervention and also to allow using the sterile mosquitoes which are no longer needed in an area where the population is already close to zero, to intervene in other zones.

This is in contrast with the constant control in Proposition~\ref{prop-constant-feedback}. Property \eqref{utens0-at-0} also implies that $
u\in L^\infty_{\text{loc}}(\mathcal{D}')$, which allows to consider Filippov solutions for the closed-loop system, i.e. the system \eqref{eq:S1E1}-\eqref{eq:S1E4}  with the feedback law \eqref{eq:backcontr}.

The next theorem shows that the feedback law \eqref{eq:backcontr} stabilizes the control system \eqref{eq:S1E1}-\eqref{eq:S1E4}.
\begin{theorem}
\label{thm-backstepping}
Assume that (\ref{eq:inequalitytheta}) holds. Then $\textbf{0}\in \mathcal{D}'$ is globally asymptotically  stable in $\mathcal{D}'$ for system \eqref{eq:S1E1}-\eqref{eq:S1E4}  with the feedback law \eqref{eq:backcontr}.
\end{theorem}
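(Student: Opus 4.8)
The plan is to carry out the backstepping construction at the level of a strict Lyapunov function, treating $M_s$ as the variable to be ``stepped back'' over the virtual feedback $M_s=\theta M$ that was shown to stabilize the $(E,M,F)$--subsystem in Proposition~\ref{prop-case-thetaM-GAS}. Concretely I would take the candidate $W:=V+\Phi$ on $\mathcal{D}'$, where $V$ is the Lyapunov function of Proposition~\ref{prop-case-thetaM-GAS} (case $\mathcal{R}(\theta)<1$) and $\Phi(M,M_s):=\dfrac{\alpha\,(M_s-\theta M)^2}{\theta M+M_s}$ penalizes the deviation $z:=M_s-\theta M$ of $M_s$ from its target $\theta M$. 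First I would record the elementary structural facts: since $|z|\le \theta M+M_s$, one has $0\le\Phi\le\alpha(\theta M+M_s)\to0$ at the origin, so $W$ extends continuously to $\mathcal{D}'$, is $\mathcal{C}^1$ off the axis $\{M=M_s=0\}$, vanishes exactly at $\textbf{0}$, is positive elsewhere, and is radially unbounded on $\mathcal{D}'$, making it an admissible candidate.

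The core computation is $\dot W$ along the closed loop, and I would first evaluate it with the \emph{unclamped} control $u=G$. Writing $N:=\theta M+M_s$, $D:=3\theta M+M_s$ and $P:=M+\gamma_sM_s$, the only term of $\dot V$ that differs from the subsystem is the one produced by $\frac{M}{M+\gamma_sM_s}-\frac{1}{1+\gamma_s\theta}=\frac{-\gamma_s z}{P(1+\gamma_s\theta)}$, which contributes $-\frac{\gamma_s\psi Ez}{P}$. Using $\Phi_{M_s}=\frac{\alpha zD}{N^2}$ and $\Phi_M=-\frac{\alpha\theta z(\theta M+3M_s)}{N^2}$, the whole design of $G$ is arranged so that the two $\dot M$--pieces of $\dot\Phi$ cancel and the surviving piece exactly cancels $-\frac{\gamma_s\psi Ez}{P}$, leaving the clean identity
\[
\dot W\big|_{u=G}=-A-\frac{\beta_s z^2 D}{N^2}\le 0,
\]
where $-A=\dot V$ is the dissipation of Proposition~\ref{prop-case-thetaM-GAS}, i.e. $A=\beta_E F+\delta_M M+\frac{1+\mathcal{R}(\theta)}{1-\mathcal{R}(\theta)}\frac{\beta_E}{K}EF+(\nu\nu_E+\delta_E)E\ge0$. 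Carrying out this cancellation is routine, but it is exactly what pins down the coefficients appearing in $G$.

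The main obstacle is the clamping $u=\max(0,G)$: when $G<0$ one releases $u=0\neq G$, and I must check this does not destroy $\dot W\le0$. Writing $\dot W=\dot W|_{u=G}+\Phi_{M_s}(u-G)$ with $\Phi_{M_s}(u-G)=\frac{\alpha zD}{N^2}(-G)$, the case $z\le0$ is immediate, since then $\frac{\alpha zD}{N^2}(-G)\le0$ and $\dot W\le\dot W|_{u=G}\le0$. The delicate case is $z>0$: here the correction can be positive, so I would bound $\dot W|_{u=0}$ \emph{directly} (regardless of the sign of $G$). If $\dot M\ge0$ every term is manifestly $\le0$; the genuinely hard sub-case is $z>0$ with $\dot M<0$, where I discard the nonpositive terms, use $A\ge\delta_M M$ and $|\dot M|\le\delta_M M$, and crucially invoke the standing hypothesis $\delta_s\ge\delta_M$ from \eqref{deltas>deltaM}. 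The algebraic identity $\theta M(\theta M+3M_s)-(3\theta M+M_s)M_s=-(M_s-\theta M)(M_s+\theta M)=-zN$ then collapses the remainder to $\delta_M\big(-M-\frac{\alpha z^2}{N}\big)\le0$. I expect this to be the real content of the proof: without $\delta_s\ge\delta_M$ one can actually force $\dot W|_{u=0}>0$ at states with $M_s\gg\theta M$ and $\dot M<0$.

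Finally I would assemble the global statement. Because $W$ is $\mathcal{C}^1$ off $\{M=M_s=0\}$ and, by the above, $\nabla W\cdot v\le0$ for every $v$ in the Filippov set-valued right-hand side (using convexity of $Y(x)$), the map $t\mapsto W(x(t))$ is nonincreasing along every Filippov solution; together with positive definiteness and properness of $W$ this gives Lyapunov stability of $\textbf{0}$ and precompactness of forward orbits. To obtain attractivity I would apply the LaSalle invariance principle for differential inclusions: on $\{\dot W=0\}$ one has $A=0$, hence $E=M=F=0$; this set is the invariant $M_s$--axis, on which $\dot M_s<0$ unless $M_s=0$, so the largest invariant subset of $\{\dot W=0\}$ is $\{\textbf{0}\}$. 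The nonsmoothness at $\{M=M_s=0\}$ is harmless because, by \eqref{0donne0}--\eqref{tout-de-suite>0}, any solution with $(E(0),F(0))\neq(0,0)$ immediately has $M(t)>0$, while the complementary invariant set $\{E=F=0\}$ is treated directly. This yields global asymptotic stability of $\textbf{0}$ in $\mathcal{D}'$.
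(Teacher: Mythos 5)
Your proposal is correct and follows essentially the same route as the paper: the same Lyapunov function $W=V+\alpha\,(\theta M-M_s)^2/(\theta M+M_s)$, the same clean identity $\dot W\big|_{u=G}=\dot V-\beta_s(\theta M-M_s)^2(3\theta M+M_s)/(\theta M+M_s)^2$, and the same handling of the clamped case $u=0$, where the sign split on $M_s-\theta M$, the hypothesis $\delta_s\geq\delta_M$ from \eqref{deltas>deltaM}, and the identity $\theta M(\theta M+3M_s)-M_s(3\theta M+M_s)=\theta^2M^2-M_s^2$ are exactly the paper's ingredients. The only divergence is the final step: you invoke a nonsmooth LaSalle principle for differential inclusions, whereas the paper combines its case bounds with \eqref{dotV<0} to get the strict decay $\dot W\leq -c'W$ off $\{M+M_s=0\}$ and then concludes via \eqref{0donne0}, \eqref{tout-de-suite>0}, \eqref{Ms(0)>0Ms>0} and \eqref{x(0)=0givesx(t)=0}, which is both simpler (no invariance principle needed) and stronger (it yields global exponential stability with an explicit decay rate).
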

\begin{proof}
Let  us define $W:\mathcal{D}'\rightarrow \RR$ by
\begin{gather}
\label{eq:lyapunov-functionW}
	W((x^T,M_s)^T) :=  V(x) + {\alpha}\frac{(\theta M-M_s)^2}{\theta M + M_s} \text{, if } M+M_s\not = 0,
\\
W((x^T,M_s)^T) := V(x)  \text{, if } M+M_s = 0.
\end{gather}
We have
\begin{gather}
W \text{ is continuous},
\\
W \text{ is of class $\mathcal{C}^1$ on $\mathcal{D}'\setminus \left\{(E,M,F,M_s)^T\in \mathcal{D}';\; M+M_s=0\right\}$},
\\
\label{Winftyatinfty}
W((x^T,M_s)^T)\to +\infty \mbox{, as } \norm{x} +M_s\to +\infty,\nonumber\\\text{ with $x\in \mathcal{D}$ and $M_s\in [0,+\infty)$},
\\
\label{W>0}
W((x^T,M_s)^T)>W(\textbf{0})=0, \; \forall (x^T,M_s)^T\in \mathcal{D}'\setminus\{\textbf{0}\}.
\end{gather}
 From now on, and until the end of this proof, we assume that $(x^T,M_s)^T$ is in $\mathcal{D}'$ and until \eqref{dotW<-case-2-2} below we further assume that
\begin{gather}
\label{MMsnotboth0}
(M,M_s)\not =(0,0).
\end{gather}
One has
\begin{equation*}
\begin{array}{rcl}
	\dot{W}((x^T,M_s)^T) &= & \nabla V(x)\cdot f(x, M_s)+{\alpha}(\theta M-M_s)\\
&&\displaystyle \frac{2(\theta \dot{M}-\dot{M}_s)(\theta M + M_s)-(\theta\dot{M}+\dot{M_s})(\theta M-M_s)}{(\theta M + M_s)^2} \\ &= &\nabla V(x)\cdot f(x, \theta M) + \nabla V(x)\cdot(f(x,M_s)-f(x,\theta M))
\\  && \displaystyle + {\alpha}(\theta M-M_s)\frac{\theta\dot{M}(\theta M + 3M_s)-\dot{M}_s(3\theta M+M_s)}{(\theta M + M_s)^2}.
\end{array}
\end{equation*}

\begin{multline}
\nabla V(x)\cdot (f(x,M_s)-f(x,\theta M))=
\\
\begin{pmatrix}\displaystyle \frac{1+\mathcal{R}(\theta)}{1-\mathcal{R}(\theta)} \\ 1 \\ \displaystyle \frac{2\beta_E}{\delta_F(1-\mathcal{R}(\theta))}
\end{pmatrix}^T
\cdot
\begin{pmatrix}
0\\ 0\\  \displaystyle \frac{\nu\nu_E\gamma_sE( \theta M-M_s)}{(M +\gamma_sM_s)(1+\gamma_s\theta)}
\end{pmatrix}
=
\\
\displaystyle \frac{\psi\gamma_sE(\theta M-M_s)}{M+\gamma_sM_s},
\end{multline}

\begin{eqnarray}	
\dot{W}((x^T,M_s)^T)&= & \nabla V(x)\cdot f(x, \theta M) + \alpha\frac{(\theta M-M_s)}{(\theta M + M_s)^2}
\nonumber
\\&& \Big[\frac{(\nabla V(x)\cdot(f((x^T,M_s)^T)-f(x,\theta M)))(\theta M + M_s)^2}{\alpha(\theta M-M_s)}\nonumber
\\
&& \phantom{bbbbbbb} +\theta \dot{M}(\theta M +3M_s)-\dot{M}_s(3\theta M + M_s)\Big]\nonumber\\ &=&
\dot{V}(x) + \alpha\frac{(\theta M-M_s)}{(\theta M + M_s)^2}\Big[ \frac{\psi\gamma_sE(\theta M + M_s)^2}{\alpha(M+\gamma_sM_s)} \\&& + ((1-\nu)\nu_E\theta E -\theta \delta_M M)(\theta M +3M_s)\nonumber
\\&&-u(3\theta M + M_s)+\delta_s{M}_s(3\theta M + M_s)\Big].
\label{dotW=}
\end{eqnarray}

We take $u$ as given by \eqref{eq:backcontr}.\\ Therefore, in case
\begin{multline}
\label{psigamma>}
	\frac{\psi\gamma_sE(\theta M + M_s)^2}{\alpha(M+\gamma_sM_s)} + ((1-\nu)\nu_E\theta E -\theta \delta_M M)(\theta M +3M_s)
\\+
\delta_sM_s(3\theta M + M_s) + {\frac{\beta_s}{\alpha}}(\theta M-M_s)(3\theta M+M_s)>0,
\end{multline}
we have
	\begin{multline*}
		u=\frac{1}{3\theta M + M_s}\Big[	\frac{\psi\gamma_sE(\theta M + M_s)^2}{\alpha(M+\gamma_sM_s)} + ((1-\nu)\nu_E\theta E -\theta \delta_M M)(\theta M +3M_s)
\\+\delta_sM_s(3\theta M + M_s) + {\frac{\beta_s}{\alpha}}(\theta M-M_s)(3\theta M+M_s)\Big],
	\end{multline*}
which, together with \eqref{dotW=}, leads to
\begin{align}
\label{dotW<-case-1}
	\dot{W}((x^T,M_s)^T)= \dot{V}(x) -\beta_s\frac{(\theta M-M_s)^2(3\theta M+M_s)}{(\theta M + M_s)^2}.
\end{align}
Otherwise, i.e. if \eqref{psigamma>} does not hold,
\begin{multline}
	\frac{\psi\gamma_sE(\theta M + M_s)^2}{\alpha(M+\gamma_sM_s)}  + ((1-\nu)\nu_E\theta E -\theta \delta_M M)(\theta M +3M_s) \\
+\delta_sM_s(3\theta M +M_s)
+ {\frac{\beta_s}{\alpha}}(\theta M-M_s)(3\theta M+M_s)\leq 0,\label{eq:inequalityueu}
\end{multline}
so, by \eqref{eq:backcontr},
\begin{gather}
\label{u=0}
u=0.
\end{gather}
We consider two cases:\\
\textbf{\underline{Case 1:}}  $\theta M > M_s$\\
\noindent Using \eqref{dotW=}, \eqref{eq:inequalityueu} and \eqref{u=0}
\begin{align}
\label{dotW<-case-2-1}
	\dot{W}((x^T,M_s)^T)&\leq  \dot{V}(x) -\beta_s\frac{(\theta M-M_s)^2(3\theta M+M_s)}{(\theta M + M_s)^2}.
\end{align}

\noindent
\textbf{\underline{Case 2:}}  $\theta M \leq  M_s$\\
Using once more \eqref{dotW=} and \eqref{u=0}
\begin{multline}
\label{case-2-dotW}
\dot{W}((x^T,M_s)^T) =\dot{V}(x) + \alpha\frac{(\theta M-M_s)}{(\theta M + M_s)^2}\Big[ \frac{\psi\gamma_sE(\theta M + M_s)^2}{\alpha(M+\gamma_sM_s)}  \\
+ \theta ((1-\nu)\nu_EE-\delta_M M)(\theta M +3M_s)+\delta_s{M}_s(3\theta M + M_s)\Big].
\end{multline}
Using \eqref{deltas>deltaM}
\begin{align*}
	-\delta_M M(\theta M +3M_s)+\delta_s{M}_s(3\theta M + M_s)\geq \delta_M(M_s-\theta M)(M_s+\theta M),
\end{align*}
which, together with \eqref{case-2-dotW}, implies that
\begin{equation}
\label{dotW<-case-2-2}
		\dot{W}((x^T,M_s)^T) \leq \dot{V}(x) - \alpha\delta_M\frac{(\theta M-M_s)^2}{(\theta M + M_s)}.
\end{equation}

To summarize, using \eqref{dotV<0}, \eqref{dotW<-case-1}, \eqref{dotW<-case-2-1} and \eqref{dotW<-case-2-2}, one gets the existence of $c'>0$, independent of $(x^T,M_s)^T\in \mathcal{D}'$, such that
\begin{gather}
\label{dotW<-final}
\dot{W}((x^T,M_s)^T) \leq -c'{W}((x^T,M_s)^T) \text{ if }M+M_s\not=0.
\end{gather}
Since one still has \eqref{0donne0}, \eqref{tout-de-suite>0}, \eqref{Ms(0)>0Ms>0} and \eqref{x(0)=0givesx(t)=0} (for $x=(x^T,M_s^T)^T$), this proves  Theorem~\ref{thm-backstepping} as in the proof of Theorem \ref{th-u=0-with-Ms} (and, again, even gives the global exponential stability and provides an estimate on the exponential decay rate).
\end{proof}
\begin{remark}
	It is important to note that the backstepping feedback control \eqref{eq:backcontr} does not depend on the environmental capacity $K$, which is can also be an interesting feature for the field applications.
\end{remark}
\subsubsection{Numerical simulations}
\label{subsec-sim-back}
The numerical simulations of the  dynamics when applying the feedback \eqref{eq:backcontr} are shown in  figure~\ref{fig:simulation1}.
The parameters  we use are set in  table \ref{eq:tableparametre}. The condition \eqref{eq:inequalitytheta} gives  $\theta >75.5625$. We fix   $K=22200 \text{ ha}^{-1}$ and we consider the persistence equilibrium as initial condition. That gives $E^0 = 21910, M^0 = 5587, F^0 = 13419
$ and $M_s^0 = 0$.  We take $\theta =220$, $\alpha=13$ and   $\beta_s=1\text{ Day}^{-1}$.

\begin{table}[H]
    \centering
	\setlength{\tabcolsep}{0.03cm}
	\begin{tabular}{|c|c|c|c|c|}
		\hline
		Parameter  &  Name &Value interval& Chosen value & Unity \\
		\hline
		$\beta_E$ & Effective fecundity &7.46-14.85& 10& Day$^{-1}$\\
		\hline
		$\gamma_s$ & Mating competitiveness of
		sterilized males  &0-1&1&- \\
		\hline
		$\nu_E$ & Hatching parameter &0.005-0.25& 0.05&Day$^{-1}$\\
		\hline
		$\delta_E$& Mosquitoes in aquatic
		phase death rate & 0.023-0.046&0.03&Day$^{-1}$\\
		\hline
		$\delta_F$& Female death rate &0.033-0.046& 0.04&Day$^{-1}$\\
		\hline
		$\delta_M$ & Males death rate &0.077-0.139&  0.1&Day$^{-1}$\\
		\hline
		$\delta_s$ & Sterilized male death rate& & 0.12&Day$^{-1}$\\
		\hline
		$\nu$ & Probability of emergence& & 0.49&\\
		\hline
	\end{tabular}
		\caption{Value intervals for the parameters of system \eqref{eq:S1E1}-\eqref{eq:S1E4} (see \cite{strugarek2019use})}
	\label{eq:tableparametre}
\end{table}

\begin{figure}[H]
	\centering
	\begin{subfigure}[H]{0.45\textwidth}
		\centering
		\includegraphics[width=\textwidth]{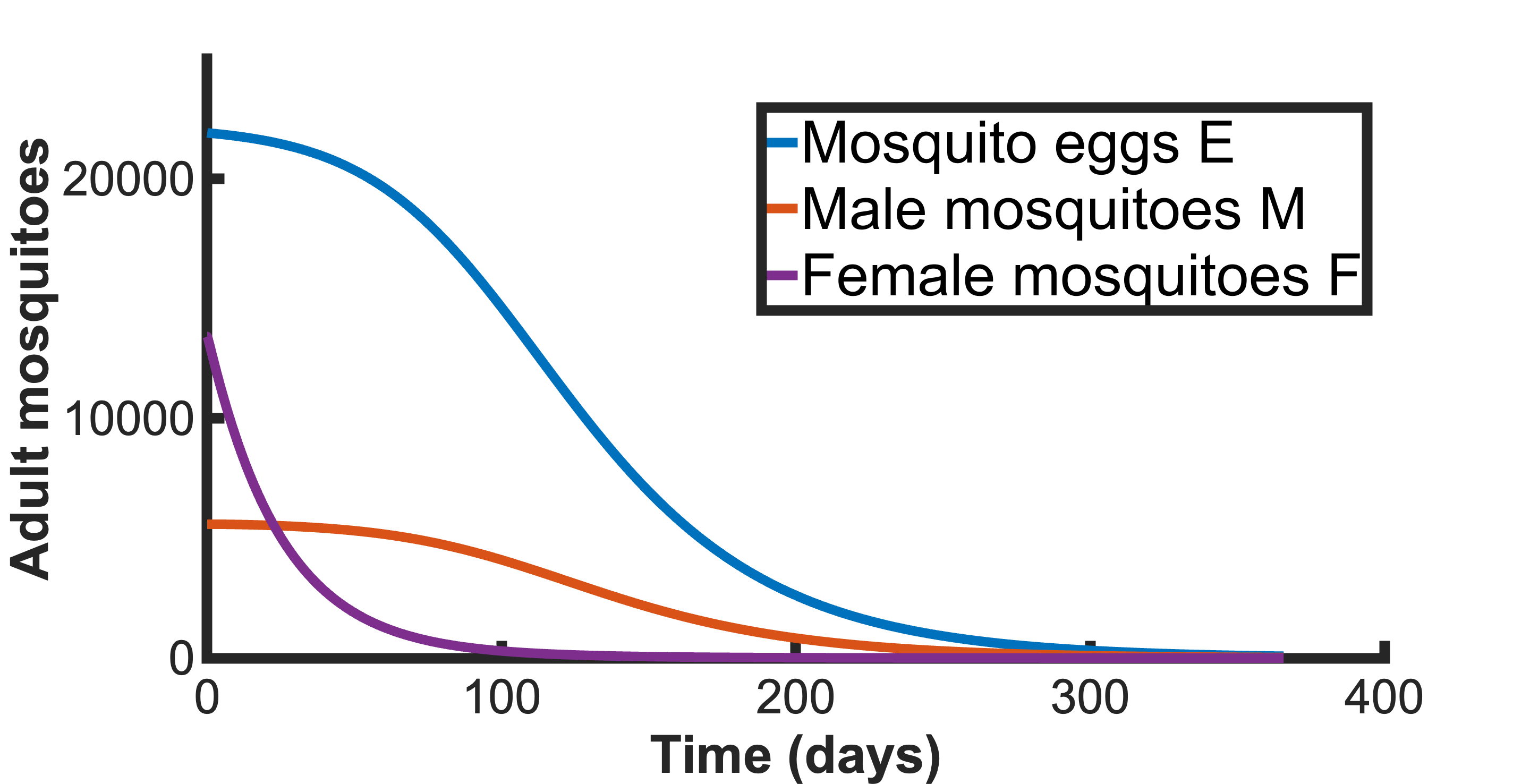}
		\caption{Plot of $E, M$ and $F$}
		\label{fig:evolutionEMF}
	\end{subfigure}
	\hfill
	\begin{subfigure}[H]{0.45\textwidth}
		\centering
		\includegraphics[width=\textwidth]{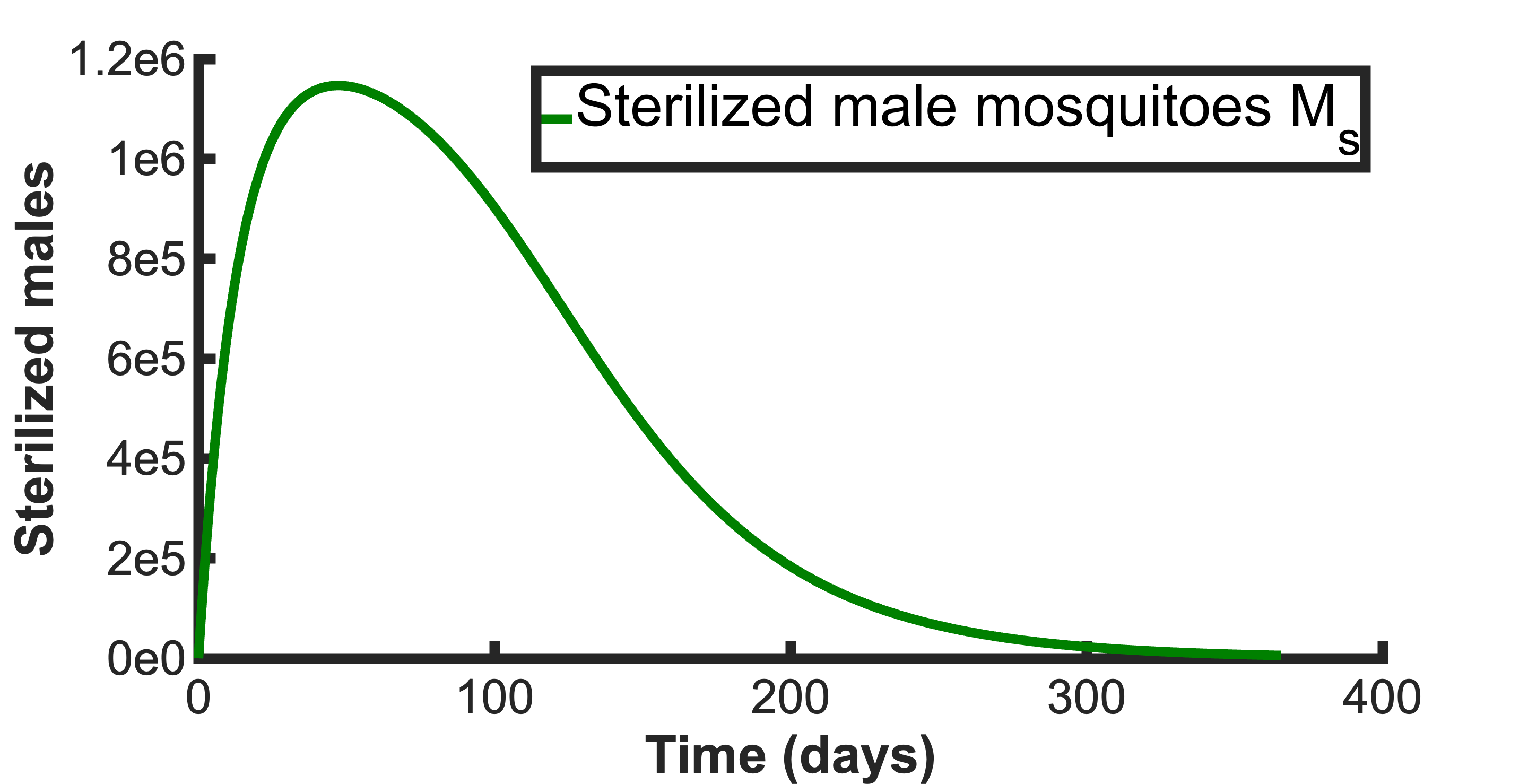}
		\caption{Plot of $M_s$}
		\label{fig:EvolutionMs-1}
	\end{subfigure}
	\hfill
	\begin{subfigure}[H]{0.5\textwidth}
		\centering
		\includegraphics[width=\textwidth]{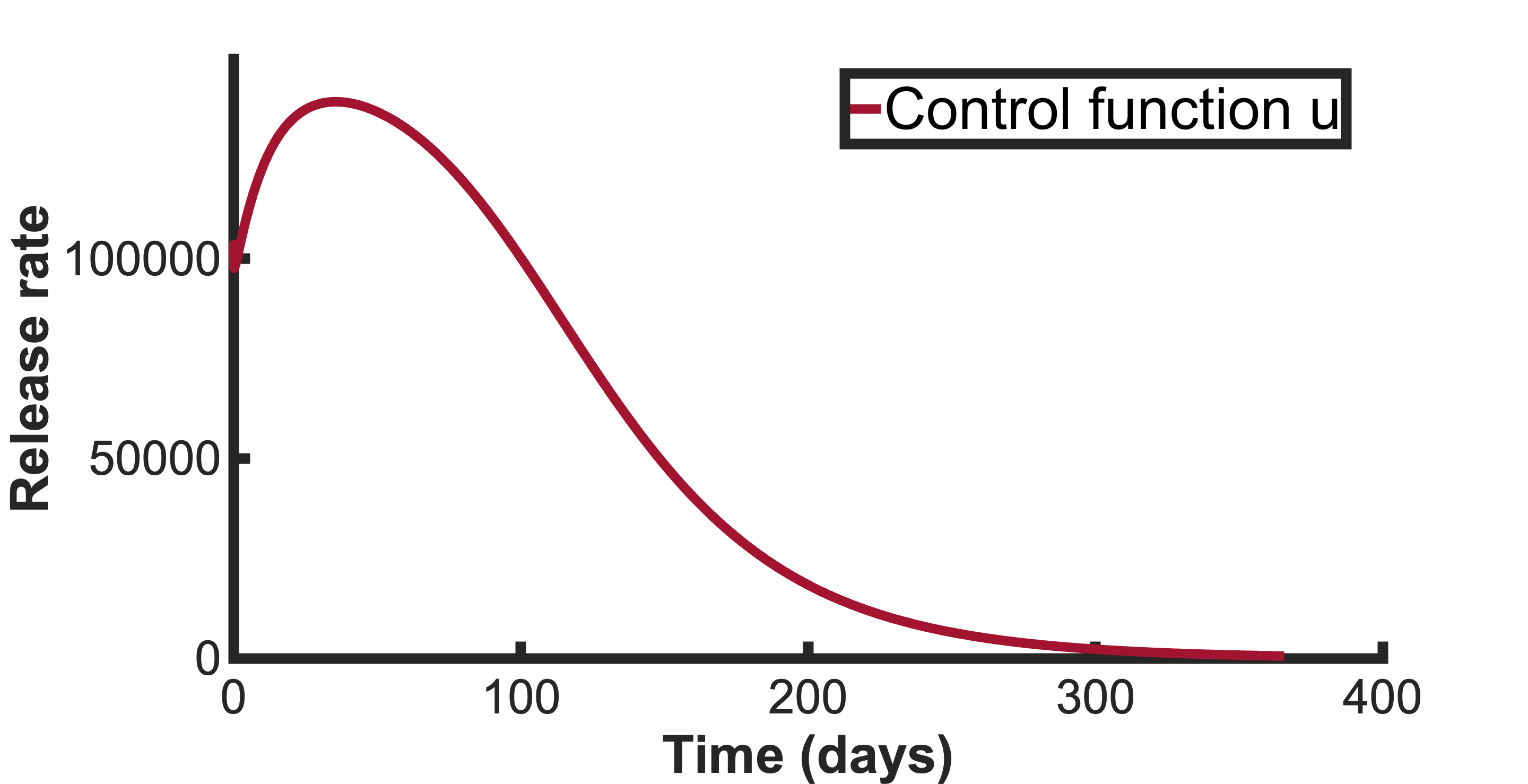}
		\caption{Plot of the control $u$}
		\label{fig:five over x-1}
	\end{subfigure}
	\begin{fig}
		(a): Plot of $E,M$ and $F$ when applying the feedback \eqref{eq:backcontr}, the initial condition being the the persistence equilibrium.  (b): Plot of $M_s$. (c): Plot of the feedback control function $u$.
\label{fig:simulation1}
	\end{fig}
	
\end{figure}
In this case, with $t_f = 360$ days,
\begin{align}
	\int_{0}^{t_f} u(t)\; dt \approx 18\;\mbox{millions}.
\end{align}

\subsubsection{Robustness test}\label{see:robusttessforback}

		 To analyze the robustness of our feedback law we use the following protocol:
		 the feedback law is given by \eqref{eq:backcontr} with fixed values of the parameters
		 corresponding to the ones chosen in table \ref{eq:tableparametre},  but for computing the
		real dynamics of the system \eqref{eq:closed-loop}  we consider
simultaneous random perturbations of the system parameters with the following distribution
\begin{align}
\begin{split}
        \hat\beta_E  &\sim\mathcal{U}(7.46, 14.85),\\
        \hat\nu_E &\sim  \mathcal{U}(0.005, 0.25),\\
        \hat\delta_E &\sim\mathcal{U}(0.023, 0.046),\\
        \hat\delta_F &\sim \mathcal{U}(0.033, 0.046),\\
        \hat\delta_M &\sim \mathcal{U}(0.077, 0.139),\\
         \hat\delta_s &\sim \mathcal{U}(0.077, 0.139),\\
        \hat\gamma_s &\sim\mathcal{U}(0.5, 1.0),
\label{eq:sim_params_random}
\end{split}
\end{align}
where $\mathcal{U}(a,b)$ is the uniform distribution on interval $[a,b]$.

Figure \ref{eq:Robtest1} shows  $200$ simulations with random initial conditions in $[0, 10K]^4$.
			
\begin{figure}[H]
		\centering
		\includegraphics[width=0.7\textwidth]{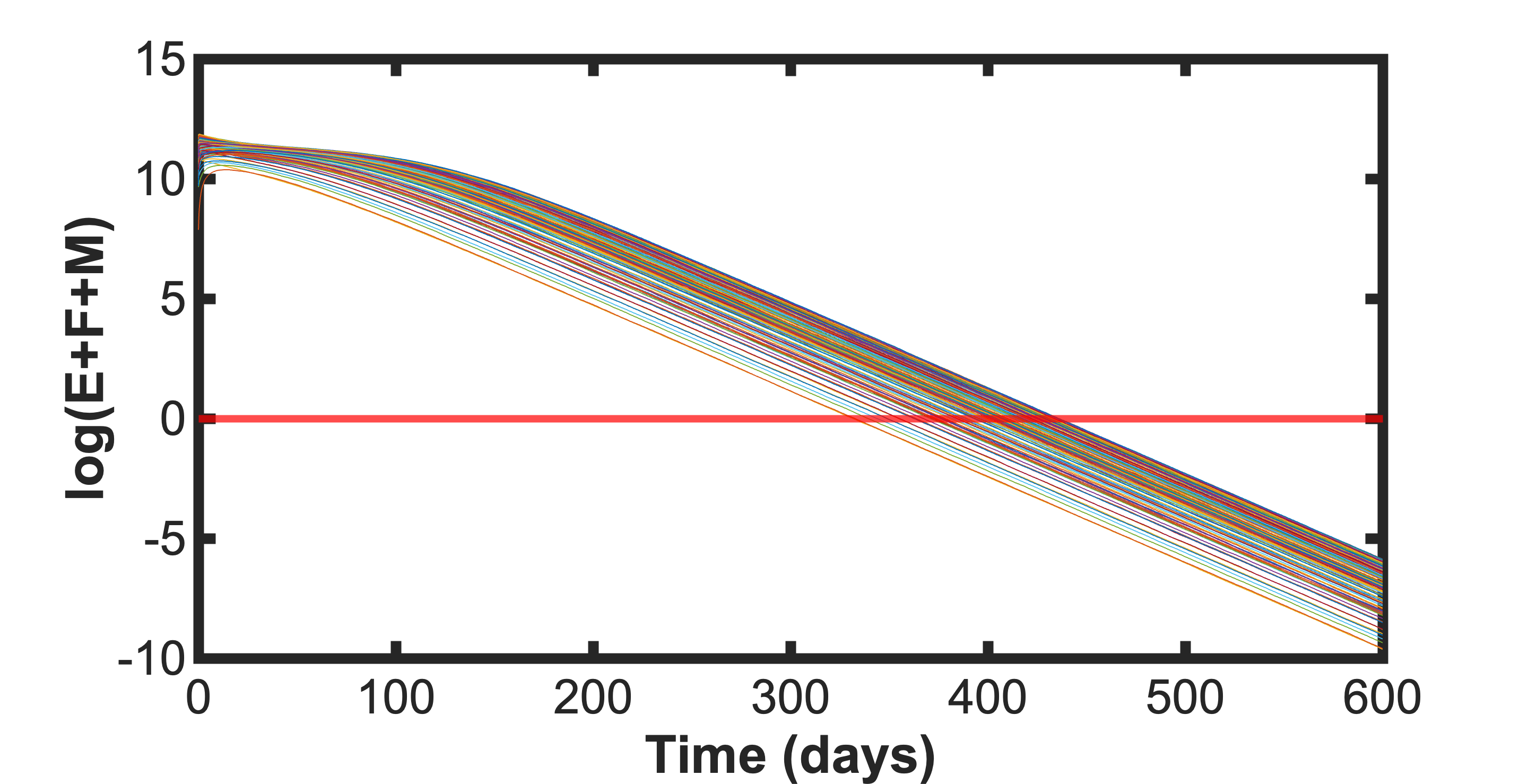}
		\begin{fig}
		  Evolution of $\log(E+F+M)$  for $200$ random initial conditions when applying feedback \eqref{eq:backcontr} computed with $\theta = 220$, $\alpha = 13$,  $\beta=1\text{ Day}^{-1}$ and the parameters given in table~\ref{eq:tableparametre}, which are not the parameters. used for simulating the mosquito dynamics (the latter being taken randomly for each simulation according to \eqref{eq:sim_params_random})
		   \label{eq:Robtest1}
		\end{fig}
   \end{figure}

We observe that the feedback \eqref{eq:backcontr} is robust: it still stabilizes the dynamics at extinction equilibrium if the changes in the  parameters are not too large.

To apply the feedback \eqref{eq:backcontr} we must estimate the number of male and female mosquitoes and the number of eggs.  Some techniques used to measure these parameters are  CDC light traps and BG-Sentinel traps. Based on mosquito behavior, such as their attraction to pheromones or light, these traps use different attractants, such as light, CO2, or human odor, to capture them. To estimate the population size and the ratio of sterile to fertile mosquitoes a common technique is to do Mark-release-recapture (MRR) studies. It consists in marking a subset of the released mosquitoes with a unique identifier and releasing then into the wild. By comparing the number of marked and unmarked mosquitoes captured in the traps, an estimate of the total population size and the ratio of sterile to fertile mosquitoes can be obtained. Some oviposition traps may be used to capture and count the number of eggs laid by female mosquitoes. To take into account the possible difficulty and cost of measuring all the variables ($E, F,M$ and $M_s$)in the field,  in the next sections (\ref{sec:male-mosquitoes} and \ref{sec:fertile-male-mosquitoes}), we propose feedback laws depending on less variables.

\subsection{Feedback laws  depending only on the total number of male mosquitoes}
\label{sec:male-mosquitoes}
Some recent adult traps are able to count automatically the number of male mosquitoes that are captured and, even in a more classic setting, there exist traps that use synthetic versions of female insect pheromones  to attract and capture male insects. This kind of traps  placed at different locations in the field, allow us to determine $M+M_s$ of the target pest population. Our aim in this section is to  build a feedback  linearly depending on $M+M_s$. Consider the closed-loop system
\begin{equation}
\dot{z}= F(z,u(z)),\; z=(E,M,F,M_s)^T\in \mathcal{D}', \label{eq:closed-loop-u(z)-male}
\end{equation}
where
\begin{align}
u(z) = k(M+M_s),\label{eq:feedalpha}
\end{align}
\begin{equation}
\label{eq:defF}
 F(z,u) = \left( \begin{array}{ccc} \beta_E F \left(1-\frac{E}{K}\right) - \left( \nu_E + \delta_E \right) E\\  (1-\nu)\nu_E E - \delta_M M\\  \nu\nu_E E \frac{M}{M+\gamma_s M_s} - \delta_F F\\ u -\delta_sM_s
\end{array}
\right),
\end{equation}
and $k$ is a fixed real number.
Throughout all this section \ref{sec:male-mosquitoes}, we assume that \eqref{R0>1} holds and that
\begin{gather}
\label{alpha<deltas}
k\in [0,\delta_s ).
\end{gather}
The offspring number related to this system  is
             \begin{equation}
                           \;\;\mathcal{R}_1(k):= \frac{(\delta_s-k)\beta_E\nu\nu_E}{\delta_F (\nu_E+\delta_E)(\delta_s-(1-\gamma_s)k)}.
                           \label{offspringR1alpha}
             \end{equation}

             \subsubsection{Equilibria of the closed-loop system}
             Equilibria of the SIT model \eqref{eq:closed-loop-u(z)-male} are obtained by solving the system
             \begin{equation}
                           \left\{
                           \begin{aligned}
                                        & \beta_E F \left(1-\frac{E}{K}\right) - \big( \nu_E + \delta_E \big) E = 0,  \\
                                        & (1-\nu)\nu_E E - \delta_M M =0,  \\
                                        &
                                        \nu\nu_E E \frac{M}{M+\gamma_s M_s} - \delta_F F=0, \\
                                        & k M -(\delta_s-k)Ms =0.
                           \end{aligned}
                           \right.
                           \label{eq:feed}
             \end{equation}
We get either the extinction equilibrium $\textbf{0}$,  i.e.
\begin{gather}
{E} =0,\; \; {M} = 0,\;\;{F} = 0, \;\;{M}_s =  0
\end{gather}
or
\begin{align}
\label{nontrivil equlibrium}
	\begin{aligned}
	{E^*} = K(1 -\frac{1}{\mathcal{R}_1(k)}), \\\; {M^*} = \frac{(1-\nu)\nu_E}{\delta_M}{E^*},\\\;{F^*} = \frac{(\delta_s-k)\nu\nu_E}{\delta_F((\delta_s-k)+\gamma_sk)}{E^*},\\ \;\;{M_s ^*}=  \frac{(1-\nu)\nu_Ek}{(\delta_s-k)\delta_M}{E^*}.
	\end{aligned}
\end{align}
Let us assume in the sequel that
\begin{align}
\label{R1(alpha)<1}
\mathcal{R}_1(k )< 1.
\end{align}
Using \eqref{nontrivil equlibrium} and \eqref{R1(alpha)<1}, one gets $E^*<0$ and therefore the equilibrium given by \eqref{nontrivil equlibrium} is not relevant. In conclusion
the closed-loop system \eqref{eq:closed-loop-u(z)-male} has one and only one equilibrium which is the extinction equilibrium $\textbf{0}$. It is therefore tempting to raise the following conjecture (compare with Theorem~\ref{th-as-without-Ms}).
\begin{conj}
The extinction equilibrium $\textbf{0}$ is globally asymptotically stable in $\mathcal{D}'$ for the closed-loop system \eqref{eq:closed-loop-u(z)-male}.
\end{conj}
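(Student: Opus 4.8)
The plan is to prove global asymptotic stability of $\textbf{0}$ for the closed-loop system \eqref{eq:closed-loop-u(z)-male} via a Lyapunov approach, in the same spirit as the proofs of Theorem~\ref{th-u=0-with-Ms} and Theorem~\ref{thm-backstepping}. The natural candidate is a linear Lyapunov function of the form
\begin{gather}
W(z) := a E + b M + c F + d M_s,
\end{gather}
with coefficients $a,b,c,d>0$ to be chosen in terms of the parameters and of $k$. The conditions \eqref{V>0-sec2}--\eqref{Vinfty-sec2} (positivity away from $\textbf{0}$ and radial unboundedness) are automatic once the coefficients are positive, so the entire content of the proof is to select $a,b,c,d$ so that $\dot W \leq -c_0 W$ for some $c_0>0$ along trajectories, at least on the region where $M+\gamma_s M_s \neq 0$. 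Just as in Theorem~\ref{th-u=0-with-Ms}, the sign properties \eqref{0donne0}, \eqref{tout-de-suite>0} together with the exponential decay inequality would then conclude the argument, including global exponential stability.

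First I would compute $\dot W$ explicitly along \eqref{eq:closed-loop-u(z)-male} with the feedback $u=k(M+M_s)$ substituted. The $M_s$-equation contributes $d(k M + k M_s - \delta_s M_s) = dk M - d(\delta_s-k)M_s$, which under \eqref{alpha<deltas} has a favorable negative coefficient on $M_s$ but a problematic positive $dkM$ term that must be absorbed by the $M$-equation. The delicate term is the female recruitment $c\,\nu\nu_E E\,\frac{M}{M+\gamma_s M_s}$: since $\frac{M}{M+\gamma_s M_s}\le 1$, one can bound it above by $c\,\nu\nu_E E$, but this crude bound discards exactly the stabilizing effect of the sterile males and, comparing with \eqref{eq:basenumberteta} and \eqref{offspringR1alpha}, will only yield decay under $\mathcal{R}_0<1$ rather than under the relevant hypothesis $\mathcal{R}_1(k)<1$. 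The real work is therefore to exploit the ratio $\frac{M}{M+\gamma_s M_s}$ more carefully, using that along the dynamics $M$ and $M_s$ are coupled through $k$; one expects that the effective reduction factor is governed precisely by the combination appearing in \eqref{offspringR1alpha}, namely $\frac{\delta_s-k}{\delta_s-(1-\gamma_s)k}$.

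The main obstacle I anticipate is exactly this nonlinearity: a \emph{static} linear Lyapunov function sees only the instantaneous value of $\frac{M}{M+\gamma_s M_s}$, whereas the stabilizing ratio $\mathcal{R}_1(k)<1$ reflects the \emph{asymptotic} proportion $M_s/M \to k/(\delta_s-k)$ enforced by the last two equations. On the invariant line $M_s=\frac{k}{\delta_s-k}M$ the recruitment factor equals $\frac{\delta_s-k}{\delta_s-(1-\gamma_s)k}$, which produces the favorable offspring number $\mathcal{R}_1(k)$; but off that line (for instance with $M_s$ initially small and $M$ large) the factor is close to $1$ and the $E$-$F$ subsystem is transiently supercritical. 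This is presumably why the authors state the result only as a \emph{conjecture}: a purely linear, or even any simple explicit, Lyapunov function seems unable to capture the transient dynamics, and one would likely need either a carefully weighted nonlinear Lyapunov function (perhaps incorporating the quantity $\theta M - M_s$ as in \eqref{eq:lyapunov-functionW}), a monotone-systems/comparison argument in the spirit of the proof of Theorem~\ref{th-as-without-Ms}, or a reduction exploiting the fast convergence of $M_s$ to its quasi-steady value. I would first attempt the weighted linear function and the crude bound to settle the sub-case $\mathcal{R}_0<1$ cleanly, then try to upgrade to $\mathcal{R}_1(k)<1$ by adding a cross term penalizing the deviation $M_s-\frac{k}{\delta_s-k}M$; I expect establishing global (as opposed to local) decay through the transient regime to be the genuine difficulty, consistent with the statement being left open.
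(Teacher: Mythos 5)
The statement you were asked to prove is, in the paper, precisely a \emph{conjecture}: the authors state explicitly that they have not been able to prove it, and they establish only two partial results --- positive invariance of the set $\mathcal{M}(\kappa)=\mathcal{T}_1\cap\mathcal{T}_2(\kappa)\cap\mathcal{T}_3$ (Theorem~\ref{th-invariant}) and global asymptotic stability of $\textbf{0}$ \emph{restricted to} $\mathcal{M}$ (Theorem~\ref{eq:thmsys}) --- together with numerical evidence. Your proposal likewise stops short of a proof: you set up the linear Lyapunov candidate, observe that the crude bound $\frac{M}{M+\gamma_s M_s}\le 1$ only yields decay under $\mathcal{R}_0<1$ rather than under the relevant hypothesis $\mathcal{R}_1(k)<1$, and correctly identify the transient regime (where $M_s$ is small relative to $M$, so the recruitment factor is near $1$ and the $E$--$F$ subsystem is supercritical) as the genuine obstruction. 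So your proposal contains no error, but it also contains no proof; what you produced is a correct diagnosis of why the statement is hard, and that diagnosis coincides with the authors' own assessment.

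It is worth noting that your suggested remedy --- controlling the deviation of $M_s/M$ from its quasi-steady value $k/(\delta_s-k)$ --- is essentially what the paper's partial result implements, but in invariant-set form rather than via a cross term in the Lyapunov function. Inside $\mathcal{T}_2(\kappa)=\{z\in\mathcal{D}': M\le\kappa M_s\}$ one has the pointwise bound $\frac{M}{M+\gamma_s M_s}\le \frac{\kappa}{\kappa+\gamma_s}$, and when $\kappa$ satisfies the strict inequality \eqref{kappa<strict} this is exactly what makes the $E$-coefficient in $\dot U$ negative for the linear function $U(z)=\delta_F E+\varepsilon M+\beta_E(1+\varepsilon)F+\varepsilon^2 M_s$ with $\varepsilon$ small (see \eqref{dotU-2}--\eqref{dotU<-cU}); Theorem~\ref{th-invariant} then guarantees that this favorable ratio persists in time. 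The price is that the conclusion holds only for initial data in $\mathcal{M}$: precisely the transient regime you flagged (large $M$, small or zero $M_s$, as at the persistence equilibrium used in the simulations) lies outside $\mathcal{M}$ and is not covered. If you wanted to close the conjecture along the lines you sketch, the missing step is to show that every Filippov trajectory starting in $\mathcal{D}'$ remains bounded and enters $\mathcal{M}$ (or some other region where the recruitment factor is favorably bounded) in finite time; neither your proposal nor the paper provides such an argument, and that is the actual open content of the statement.
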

We have not been able to prove this conjecture. However,
\begin{enumerate}
\item In section \ref{sec-invariant}, we give a positively invariant set for the closed-loop system \eqref{eq:closed-loop-u(z)-male} in which, as proved in section~\ref{sec-stab-invariant}, $\textbf{0}$ is globally asymptotically stable for \eqref{eq:closed-loop-u(z)-male};
\item In section~\ref{sec-numerical-evidence}, we provide numerical evidence for this conjecture.
\end{enumerate}

\subsubsection{Invariant set of the closed-loop system}
\label{sec-invariant}
 From \eqref{R0>1}, \eqref{alpha<deltas}, and \eqref{R1(alpha)<1}, one gets
\begin{align}
             \frac{\beta_E\nu\nu_E- (\nu_E+\delta_E)\delta_F}{\beta_E\nu\nu_E-(1-\gamma_s) (\nu_E+\delta_E)\delta_F}\delta_s<k<\delta_s.\label{eq:hypotheses-alpha2}
\end{align}
Let us define, with $z=(E,M,F,M_s)^T$,
\begin{align}
\label{defT1}
\mathcal{T}_1 :=\{z\in \mathcal{D}' : \beta_EF(1-\frac{E}{K})\leq  (\nu_E+\delta_E)E\},
\\
\label{defT3}
\mathcal{T}_3 := \{z\in\mathcal{D}': (1-\nu)\nu_EE\leq \delta_MM\},
\end{align}
and, for $\kappa>0$,
\begin{align}
\label{defT2}
\mathcal{T}_2(\kappa) = \{z\in\mathcal{D}' : M\leq \kappa M_s\}.
\end{align}

One has the following theorem.
\begin{theorem}
\label{th-invariant}
Assume that \eqref{eq:hypotheses-alpha2} holds and that
\begin{gather}
\label{kappa<}
\kappa \leq \frac{\gamma_s\delta_F(\nu_E+\delta_E)}{\beta_E\nu\nu_E-\delta_F(\nu_E+\delta_E)},
\\
\label{kappa>}
\kappa \geq \frac{\delta_s-k}{k}.
\end{gather}
Then $\mathcal{M}(\kappa):= \mathcal{T}_1\cap\mathcal{T}_2(\kappa)\cap \mathcal{T}_3 $ is a positively invariant set of the closed-loop system \eqref{eq:closed-loop-u(z)-male}.
\end{theorem}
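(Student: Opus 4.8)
The plan is to prove positive invariance by a boundary (Nagumo / subtangentiality-type) argument. I write $\mathcal{M}(\kappa)$ as the intersection with $\mathcal{D}'$ of three superlevel sets $\{g_i\ge 0\}$, where
$g_1(z):=(\nu_E+\delta_E)E-\beta_E F(1-E/K)=-\dot E$, $g_3(z):=\delta_M M-(1-\nu)\nu_E E=-\dot M$, and $g_2(z):=\kappa M_s-M$, so that $\mathcal{T}_1=\{g_1\ge 0\}\cap\mathcal{D}'$, $\mathcal{T}_3=\{g_3\ge 0\}\cap\mathcal{D}'$ and $\mathcal{T}_2(\kappa)=\{g_2\ge 0\}\cap\mathcal{D}'$. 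Since $\mathcal{D}'$ is positively invariant and, by \eqref{0donne0}--\eqref{tout-de-suite>0}, the only point of $\mathcal{M}(\kappa)$ at which $M+\gamma_s M_s=0$ is the equilibrium $\textbf{0}$ (where the vector field vanishes), the right-hand side of \eqref{eq:closed-loop-u(z)-male} is smooth on $\mathcal{M}(\kappa)\setminus\{\textbf{0}\}$. It therefore suffices to verify, at each point of $\mathcal{M}(\kappa)$ lying on a face $\{g_i=0\}$, that $\dot g_i\ge 0$, i.e. that the field does not point strictly outward; the key is that on each face I am allowed to use membership in the other two sets.

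The delicate face is $\{g_1=0\}$. There $\dot E=-g_1=0$, so differentiating $g_1$ and cancelling the $\dot E$ terms gives
\[
\dot g_1=-\beta_E\left(1-\frac{E}{K}\right)\dot F .
\]
Moreover $g_1=0$ with $E>0$ forces $1-E/K>0$ (the term $(\nu_E+\delta_E)E$ is positive), so the sign of $\dot g_1$ is opposite to that of $\dot F$, and I must show $\dot F\le 0$. Eliminating $F=(\nu_E+\delta_E)E/\big(\beta_E(1-E/K)\big)$ from the face equation, bounding the mating fraction by $M/(M+\gamma_s M_s)\le \kappa/(\kappa+\gamma_s)$ (which is exactly what membership in $\mathcal{T}_2(\kappa)$ provides) and using $1/(1-E/K)\ge 1$, the inequality $\dot F\le 0$ reduces to $\kappa\,\mathcal{R}_0\le \kappa+\gamma_s$, i.e. to $\kappa\le \gamma_s/(\mathcal{R}_0-1)$. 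This is precisely hypothesis \eqref{kappa<}, and it is the only place that condition is used (positivity of the bound relying on $\mathcal{R}_0>1$ from \eqref{R0>1}).

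The remaining faces cross-use the constraints in the opposite direction and are routine. On $\{g_3=0\}$ one has $\dot M=-g_3=0$, hence $\dot g_3=-(1-\nu)\nu_E\dot E\ge 0$ because $\dot E=-g_1\le 0$ on $\mathcal{T}_1$, so no condition on $\kappa$ is needed here. On $\{g_2=0\}$ one has $M=\kappa M_s$; writing $\dot M_s=k(M+M_s)-\delta_s M_s=kM-(\delta_s-k)M_s$ and computing $\dot g_2=\kappa\dot M_s-\dot M$, I use the bound $(1-\nu)\nu_E E\le \delta_M M=\delta_M\kappa M_s$ coming from $\mathcal{T}_3$ to absorb the indefinite term $-(1-\nu)\nu_E E$, which yields $\dot g_2\ge \kappa\big(k(\kappa+1)-\delta_s\big)M_s$. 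This is nonnegative exactly under hypothesis \eqref{kappa>}. Collecting the three face inequalities and invoking the invariance criterion gives the theorem.

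The main difficulty is technical rather than conceptual. The substantive point is the $g_1$-face: the sign of $\dot F$ is not visible a priori and only becomes controllable after simultaneously eliminating $F$ through the face equation, invoking the $\mathcal{T}_2(\kappa)$ bound on the mating fraction, and using $E<K$; the constant in \eqref{kappa<} must match this computation exactly. The secondary point is to make the invariance criterion rigorous for Filippov solutions at the non-smooth boundary points (the edges where two faces meet) and near the singular locus $M+\gamma_s M_s=0$: the latter reduces to the equilibrium $\textbf{0}$, while the edges are handled because the inward inequality holds simultaneously on each adjacent face, so no trajectory can leak through a corner.
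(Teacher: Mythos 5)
Your proof is correct and follows essentially the same route as the paper: a Nagumo-type face-by-face analysis in which, on $\partial\mathcal{T}_1$, the $\mathcal{T}_2(\kappa)$ bound $M/(M+\gamma_s M_s)\le \kappa/(\kappa+\gamma_s)$ combined with \eqref{kappa<} yields the inward inequality, on $\partial\mathcal{T}_2(\kappa)$ the $\mathcal{T}_3$ bound combined with \eqref{kappa>} does, and on $\partial\mathcal{T}_3$ membership in $\mathcal{T}_1$ suffices (your computations coincide with the paper's up to the sign convention $g_i=-h_i$). The only organizational difference is that the paper treats initial data with $E=F=0$ separately by solving the resulting linear $(M,M_s)$ dynamics explicitly, whereas you fold those points into the boundary analysis after observing that the discontinuity locus $M+\gamma_s M_s=0$ meets $\mathcal{M}(\kappa)$ only at $\textbf{0}$, which is an equally valid treatment of the same degenerate case.
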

\begin{remark}
Note that \eqref{eq:hypotheses-alpha2} implies that
\begin{gather}
\label{ineqoverlinekappa}
0<\frac{\delta_s-k}{k}<\frac{\gamma_s\delta_F(\nu_E+\delta_E)}{\beta_E\nu\nu_E-\delta_F(\nu_E+\delta_E)}.
\end{gather}
Hence there are $\kappa>0$ such that both \eqref{kappa<} and \eqref{kappa>} hold.
\end{remark}
\begin{proof}[Proof of Theorem \ref{th-invariant}]
Let us first study the case where one starts with $E=F=0$ : we consider the Filippov solution(s) to the Cauchy problem
 \begin{gather}
 \label{Cauchy-0}
 \dot z =F(z,u(z)),  \; E(0)=0, \,M(0)=M_0,\, F(0)=0, \; M_s(0)=M_{s0},
 \end{gather}
 where $(M_0,M_{s0})^T\in [0,+\infty)^2$ is such that
 \begin{gather}
 \label{ciMetMs}
 M_0\leq \kappa M_{s0}.
 \end{gather}
  From \eqref{eq:feedalpha}, \eqref{eq:defF}, and \eqref{Cauchy-0}, one gets
 \begin{gather}
 \label{E=F=0forallt}
 E(t)=F(t)=0, \; \forall t\geq 0,
 \\
\label{dotM-dotMs}
\dot M=-\delta_M M \text{ and } \dot M_s =k M -(\delta_s-k)M_s.
 \end{gather}
In particular, for every $t\geq 0$, $z(t)\in \mathcal{T}_1\cap \mathcal{T}_3 $. It remains to check that
\begin{gather}
\label{zinT2kappa}
z(t)\in \mathcal{T}_2(\kappa)\; \forall t\geq 0.
\end{gather}
 From \eqref{dotM-dotMs}, one has
\begin{gather}
\label{dsurdtM-kappaMs}
\frac{d}{dt}\left(M-\kappa M_s\right)=-(\delta_M+\kappa k)(M-\kappa M_s)
-\kappa ((1+\kappa)k-\delta_s+\delta_M)M_s.
\end{gather}
 From \eqref{kappa>} one has
\begin{gather}
\label{terminMs<0}
(1+\kappa)k-\delta_s+\delta_M\geq \delta_M.
\end{gather}
Property \eqref{zinT2kappa} readily follows from \eqref{ciMetMs}, \eqref{dsurdtM-kappaMs} and \eqref{terminMs<0}.

Let us now deal with the case where $E+F>0$. Note that, for $z\in \mathcal{M}(\kappa)$, this implies that
\begin{gather}
\label{E>0andM>0}
E>0 \text{ and } M>0.
\end{gather}
Until the end of the proof of Theorem \ref{th-invariant} we assume that $z\in \mathcal{D}'$ and is such that \eqref{E>0andM>0} holds.\\
Let $h_1:\mathcal{D}'\rightarrow \RR$ be defined by

\begin{equation}
 \label{defh1}
h_1(z) := \beta_EF(1-\frac{E}{K})- (\nu_E+\delta_E)E.
\end{equation}
 Its time derivative  along the solution of the closed-loop system
 \eqref{eq:closed-loop-u(z)-male} is
 \begin{multline}
 \dot h_1(z) = \beta_E\nu\nu_EE\frac{M}{M+\gamma_sM_s}(1-\frac{E}{K}) \\-\delta_F\beta_EF(1-\frac{E}{K})-\frac{\beta_E^2F^2}{K}(1-\frac{E}{K}) \\+ \frac{\beta_E(\nu_E+\delta_E)EF}{K}-(\nu_E+\delta_E)\beta_EF(1-\frac{E}{K})+(\nu_E+\delta_E)^2E.
 \end{multline}
 For a set $\Sigma \subset \mathcal{D}'$, let us denote by $\partial \Sigma$ its boundary in $\mathcal{D}'$. On  $\partial\mathcal{T}_1$, $$\beta_EF(1-{E}/{K})=(\nu_E+\delta_E)E$$. Hence
\begin{equation}
\dot h_1(z) = \beta_E\nu\nu_EE\frac{M}{M+\gamma_sM_s}(1-\frac{E}{K}) -\delta_F(\nu_E+\delta_E)E \text{ if } z \in \partial\mathcal{T}_1.
\end{equation}
In particular, using \eqref{kappa<},
\begin{align}
 \dot h_1(z)\leq  -\beta_E\nu\nu_E\frac{M}{M+\gamma_sM_s}\frac{E^2}{K}<0 \text{ if } z \in  \partial\mathcal{T}_1\cap\mathcal{T}_2(\kappa).
\end{align}

Let us now turn to the behavior of the closed-loop system on the $\partial \mathcal{T}_2(\kappa)$.
Let $$h_2:\mathcal{D}'\rightarrow \RR$$ be defined by
 \begin{equation}
 \label{defh2}
h_2(z) := M-\kappa M_s.
 \end{equation}
 Its time derivative  along the solution of the closed-loop system
 \eqref{eq:closed-loop-u(z)-male} is
\begin{gather}
\dot h_2(z)=(1-\nu)\nu_E E - \delta_M M-\kappa\left(k M -\left(\delta_s-k\right) M_s\right),
\end{gather}
which leads to
\begin{gather}
\label{doth2-1}
\dot h_2(z)=(1-\nu)\nu_E E - ((1+\kappa)k-\delta_s+\delta_M)M \text{ if }
z \in \partial \mathcal{T}_2(\kappa).
\end{gather}
 From \eqref{defT3}, \eqref{terminMs<0}, and \eqref{doth2-1}, one gets that
\begin{gather}
\label{doth2-1-new}
\dot h_2(z)\leq 0 \text{ if }
z \in \mathcal{T}_3 \cap \partial \mathcal{T}_2(\kappa).
\end{gather}

 Finally, let us study the behavior of the closed-loop system on the $\partial \mathcal{T}_3$.
Let $$h_3:\mathcal{D}'\rightarrow \RR$$ be defined by
 \begin{equation}
 \label{defh3}
h_3(z) := (1-\nu)\nu_EE - \delta_MM.
 \end{equation}
 Its time derivative  along the solution of the closed-loop system
 \eqref{eq:closed-loop-u(z)-male} is
\begin{gather}
\dot h_3(z)=\beta_E F \left(1-\frac{E}{K}\right) - \big( \nu_E + \delta_E \big) E  - \delta_M \left((1-\nu)\nu_E-\delta_M M\right),
\end{gather}
which leads to
\begin{gather}
\label{doth3-1}
\dot h_3(z)=\beta_E F \left(1-\frac{E}{K}\right) - \big( \nu_E + \delta_E \big) E \text{ if }
z \in \partial \mathcal{T}_3.
\end{gather}
In particular,
\begin{gather}
\label{doth3-2}
\dot h_3(z)\leq -\beta_E \frac{EF}{K} \leq 0\text{ if }
z \in \mathcal{T}_2(\kappa)\cap \partial \mathcal{T}_3.
\end{gather}
This concludes the proof  of Theorem \ref{th-invariant}.
\end{proof}

\subsubsection{Global asymptotic stability result}
\label{sec-stab-invariant} Let
\begin{gather}
\label{defoverliekappa}
\overline \kappa : =\frac{\gamma_s\delta_F(\nu_E+\delta_E)}{\beta_E\nu\nu_E-\delta_F(\nu_E+\delta_E)},
\\
\mathcal{M}:=\mathcal{M}(\overline \kappa).
\end{gather}
Let us recall that, by \eqref{ineqoverlinekappa},  $\overline \kappa$, which clearly satisfies \eqref{kappa<}, satisfies also \eqref{kappa>}. In particular,
by Theorem~\ref{th-invariant}, $\mathcal{M}$ is positively invariant for the closed-loop system \eqref{eq:closed-loop-u(z)-male}.
The main result of this section is the following theorem.
\begin{theorem}\label{eq:thmsys}
 Assume that \eqref{eq:hypotheses-alpha2} holds.  Then $\textbf{0}$ is  globally asymptotically stable for the closed-loop system \eqref{eq:closed-loop-u(z)-male} in $\mathcal{M}$.
 \end{theorem}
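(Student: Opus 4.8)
The plan is to exploit the defining feature of the invariant set $\mathcal{M}$: since $\mathcal{M}\subset\mathcal{T}_1\cap\mathcal{T}_3$ and $\mathcal{M}$ is positively invariant (Theorem~\ref{th-invariant}), every trajectory starting in $\mathcal{M}$ satisfies $\dot E\leq 0$ and $\dot M\leq 0$ for all $t\geq 0$, so $E$ and $M$ are non-increasing. Being bounded below by $0$, they converge to limits $E_\infty,M_\infty\geq 0$. I would first dispose of the degenerate case: by \eqref{0donne0}, if $(E(0),F(0))=(0,0)$ then $(E,F)\equiv(0,0)$, whence $\dot M=-\delta_M M$ and $\dot M_s=kM-(\delta_s-k)M_s$ with $\delta_s-k>0$ by \eqref{alpha<deltas}, so $(M,M_s)\to(0,0)$ and $z(t)\to\textbf{0}$. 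Otherwise, by \eqref{tout-de-suite>0}, $E,M,F>0$ for $t>0$, so $M+\gamma_s M_s>0$ and the right-hand side is smooth along the trajectory, allowing classical ODE arguments.

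For global attractivity in the non-degenerate case I would argue by a cascade of scalar convergence facts rather than invoking LaSalle. First, $M_s$ solves the stable affine scalar equation $\dot M_s=kM-(\delta_s-k)M_s$ driven by the convergent input $kM(t)\to kM_\infty$; since $\delta_s-k>0$, one gets $M_s\to M_{s\infty}:=kM_\infty/(\delta_s-k)$. Next, because $M$ converges and $\dot M=(1-\nu)\nu_E E-\delta_M M$ also converges, its limit must be $0$, giving $M_\infty=\frac{(1-\nu)\nu_E}{\delta_M}E_\infty$. I would then suppose, for contradiction, that $E_\infty>0$; then $M_\infty>0$, the ratio $M/(M+\gamma_s M_s)$ converges to $M_\infty/(M_\infty+\gamma_s M_{s\infty})$, so $F$ converges to $F_\infty:=\frac{\nu\nu_E E_\infty}{\delta_F}\,\frac{M_\infty}{M_\infty+\gamma_s M_{s\infty}}$, and finally $\dot E\to 0$ forces $\beta_E F_\infty(1-E_\infty/K)-(\nu_E+\delta_E)E_\infty=0$. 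The quadruple $(E_\infty,M_\infty,F_\infty,M_{s\infty})$ would thus be a nonzero equilibrium of the closed-loop system, contradicting the fact established above (under \eqref{eq:hypotheses-alpha2}, i.e. $\mathcal{R}_1(k)<1$) that $\textbf{0}$ is its only equilibrium in $\mathcal{D}'$. Hence $E_\infty=0$, so $M_\infty=M_{s\infty}=0$, and $F\to 0$ follows from the comparison $\dot F\leq\nu\nu_E E-\delta_F F$ with $E\to 0$; therefore $z(t)\to\textbf{0}$.

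For stability in the sense of Definition~\ref{def-stab} I would use direct \emph{a priori} bounds, again leveraging monotonicity. For $z(0)\in\mathcal{M}$ one has $E(t)\leq E(0)$ and $M(t)\leq M(0)$; comparison in the $F$- and $M_s$-equations (using $M/(M+\gamma_s M_s)\leq 1$ and $\delta_s-k>0$) yields $F(t)\leq F(0)+\frac{\nu\nu_E}{\delta_F}E(0)$ and $M_s(t)\leq M_s(0)+\frac{k}{\delta_s-k}M(0)$. Thus $\|z(t)\|\leq C\|z(0)\|$ for a constant $C$ depending only on the parameters, which gives the required $\varepsilon$--$\delta$ estimate. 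Combining this with the attractivity above yields global asymptotic stability of $\textbf{0}$ in $\mathcal{M}$. The main obstacle is the contradiction step: ensuring that the componentwise limits genuinely assemble into an equilibrium, which requires the convergence of both $F$ and $M_s$ (hence the careful cascade and the exclusion of the singular locus $M+\gamma_s M_s=0$), after which the uniqueness of the equilibrium closes the argument.
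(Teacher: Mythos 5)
Your proof is correct, but it follows a genuinely different route from the paper's. The paper's proof is Lyapunov-based: it first establishes Lemma~\ref{lemK} (exponential decay of $U(z)=\delta_F E+\varepsilon M+\beta_E(1+\varepsilon)F+\varepsilon^2M_s$ on the smaller sets $\mathcal{M}(\kappa)$ with $\kappa$ satisfying the \emph{strict} inequality \eqref{kappa<strict}), and then handles $\mathcal{M}=\mathcal{M}(\overline\kappa)$ by a dichotomy: either the trajectory enters $\{M<\overline\kappa M_s\}$ at some time, hence lies in some $\mathcal{M}(\kappa)$ where the lemma applies, or it stays on the face $\{M=\overline\kappa M_s\}$ forever, in which case differentiating $\overline h_2\equiv 0$ forces $h_3\equiv 0$, then $EF\equiv 0$, then $F\equiv 0$, and finally a contradiction; stability is obtained from the monotone functionals $\overline U=\delta_F E+\beta_E F$ and $M_s$. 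You instead exploit the structural observation that on $\mathcal{T}_1\cap\mathcal{T}_3$ one has $\dot E\le 0$ and $\dot M\le 0$, so that $E$ and $M$ converge monotonically; you then run a cascade (stable scalar equation for $M_s$ with convergent input, then $F$, then the fact that a convergent function with convergent derivative has derivative tending to zero) to show the vector of limits is an equilibrium, and conclude from the uniqueness of the equilibrium $\textbf{0}$ in $\mathcal{D}'$ under \eqref{eq:hypotheses-alpha2} (established just before the conjecture). Your stability argument via comparison bounds is close in spirit to the paper's, and in fact only uses invariance of $\mathcal{T}_1\cap\mathcal{T}_3$. What your approach buys: it is more elementary and self-contained — no auxiliary family $\mathcal{M}(\kappa)$, no Lyapunov function, no boundary-rigidity analysis — and your handling of the singular ratio is sound since on $\mathcal{M}$ the locus $M=M_s=0$ forces $E=F=0$ (via $\mathcal{T}_3$ then $\mathcal{T}_1$), so together with \eqref{tout-de-suite>0} the classical ODE arguments are justified. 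What it loses: the paper's method yields exponential convergence with an explicit decay rate on each $\mathcal{M}(\kappa)$ and an explicit Lyapunov certificate, which is precisely what the paper uses to quantify robustness to perturbations and measurement errors in the Filippov framework; your argument gives only asymptotic convergence with no rate. Two small points you glossed over that deserve a sentence in a polished write-up: global existence/boundedness of solutions (needed before speaking of limits), which follows from the very comparison bounds you derive for stability, and the fact that the a.e.\ differential inequalities for Filippov solutions indeed give monotonicity of the absolutely continuous functions $E$ and $M$.
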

 \begin{proof}
  The first step of the proof is the following lemma which shows that Theorem \ref{eq:thmsys} holds with $\mathcal{M}$ replaced by $\mathcal{M}(\kappa)$ provided that \eqref{kappa<} is a strict inequality and that \eqref{kappa>} holds.
 \begin{lemma}
 \label{lemK}
Let us assume that \eqref{kappa>} holds and that
\begin{gather}
\label{kappa<strict}
\kappa < \frac{\gamma_s\delta_F(\nu_E+\delta_E)}{\beta_E\nu\nu_E-\delta_F(\nu_E+\delta_E)}.
\end{gather}
Then $\textbf{0}$ is  globally asymptotically stable for system \eqref{eq:closed-loop-u(z)-male} in $\mathcal{M}(\kappa)$.
 \end{lemma} To prove this lemma we use a Lyapunov approach. Our  Lyapunov function is   $U: \mathcal{D}'\rightarrow\RR_+$, $z\mapsto U(z)$,
\begin{equation}
\label{defU}
 U(z) = \delta_F E +\varepsilon M +\beta_E(1+\varepsilon)F+\varepsilon^2M_s,
\end{equation}
where $\varepsilon\in (0,1]$ is a constant which will be chosen later on. One has
\begin{gather}
U \text{ is of class $\mathcal{C}^1$},
\\
\label{U>0}
U(z)>U(\textbf{0}) =0,\;\forall z \in \mathcal{D}'\setminus\{\textbf{0}\},
\\
\label{Uzgrandsizgrand}
U(z)\rightarrow +\infty \text{ as } |z|\rightarrow +\infty \text{ with } z\in \mathcal{D}'.
\end{gather}
Let us assume for the time being that
\begin{gather}
\label{M+MSnot0}
M+M_s\not =0.
\end{gather}
Then, the time derivative  of $U$ along the solution of the closed-loop system
 \eqref{eq:closed-loop-u(z)-male} is
\begin{multline}
\label{dotU-1}
\dot U (z)= \delta_F \left(\beta_E F \left(1-\frac{E}{K}\right) - \left( \nu_E + \delta_E \right) E\right)
+\varepsilon \left((1-\nu)\nu_E E - \delta_M M\right)
\\+\beta_E(1+\varepsilon)\left(\nu\nu_E E \frac{M}{M+\gamma_s M_s} - \delta_F F\right)
+\varepsilon^2\left(k M -(\delta_s-k)M_s\right).
\end{multline}
In particular,
\begin{multline}
\label{dotU-2}
\dot U (z)\leq  -\varepsilon\delta_F\beta_E F  \\- \left(\left( \nu_E + \delta_E \right)-\varepsilon (1-\nu)\nu_E-\beta_E(1+\varepsilon)\nu\nu_E \frac{\kappa}{\kappa +\gamma_s} \right)E
\\- \varepsilon \left(\delta_M-\varepsilon k\right)M -\varepsilon^2(\delta_s-k)M_s \text{ if } z\in \mathcal{M}(\kappa).
\end{multline}
Let us now point out that \eqref{kappa<strict} implies that
\begin{gather}
\label{terminE>0}
\beta_E\nu\nu_E \frac{\kappa}{\kappa +\gamma_s} <\left( \nu_E + \delta_E \right).
\end{gather}
 From \eqref{dotU-2} and \eqref{terminE>0} one gets that for $\varepsilon >0$ small enough there exists $c(\varepsilon)>0$ independent of $z\in \mathcal{M}(\kappa)$ such that
 \begin{gather}
\label{dotU<-cU}
\dot U (z)\leq -c(\varepsilon) U(z) \text{ if } z\in \mathcal{M}(\kappa).
\end{gather}
It remains to remove assumption \eqref{M+MSnot0}. Let $$t\mapsto z(t)=(E(t),M(t),F(t),M_s(t))^T$$ be a Filippov solution of the closed loop system for the initial condition $z(0)=(E_0,M_0,F_0,M_{s0})^T\in \mathcal{M}(\kappa)$. We observe that if $(E_0,F_0)=(0,0)$, then $z(0)\in \mathcal{M}(\kappa)$ implies that $M_0>0$, from which one gets that $M(t)>0$ for every $t\geq 0$. Hence \eqref{dotU<-cU} holds for every $t\geq 0$. While, if $(E_0,F_0)\not= (0,0)$, then $M(t)>0$ for every $t>0$. In particular, one still has \eqref{dotU-2} and therefore \eqref{dotU<-cU} for every $t> 0$. Hence,
\begin{gather}
U(z(t))\leq e^{-c(\varepsilon)t}U(z(0)),\; \forall t\geq 0,
\end{gather}
which, together with \eqref{U>0} and \eqref {Uzgrandsizgrand}, concludes the proof of Lemma~\ref{lemK}.

Let us now deduce from Lemma~\ref{lemK} that
\begin{gather}
\label{global-attractor-in-M}
\text{$\textbf{0}$ is a global attractor for the closed-loop system \eqref{eq:closed-loop-u(z)-male} in $\mathcal{M}$.}
\end{gather}
Let $z(t)=(E(t),M(t),F(t),M_s(t))^T$ be a Filippov solution of the closed-loop system \eqref{eq:closed-loop-u(z)-male} for the initial condition $z(0)=(E_0,M_0,F_0,M_{s0})^T\in \mathcal{M}(\kappa)$. If  $(E_0,F_0)=(0,0)$ then one has \eqref{E=F=0forallt} and \eqref{dotM-dotMs} which leads to $z(t)\rightarrow \textbf{0}$ as $t\rightarrow +\infty$ (note that, by \eqref{ineqoverlinekappa}, $\delta_s-k>0$).
Let $h_2:\mathcal{D}'\rightarrow \RR$ be defined by
 \begin{equation}
 \label{defoverlineh2}
\overline h_2(z) := M-\overline \kappa M_s.
 \end{equation}
 Note that, if for some $t_0\geq 0,$ $\overline h_2(z(t))<0$, then there exists $\kappa>0$ satisfying \eqref{kappa>} and \eqref{kappa<strict} such that $z(t_0)\in \mathcal{M}(\kappa)$. By Lemma~\ref{lemK} one then has $z(t)\rightarrow \textbf{0}$ as $t\rightarrow +\infty$. If there is no such $t_0 ,$ then
\begin{gather}
\label{h2=0}
\overline h_2(z(t))=0 \text{ for every $t\geq 0$. }
\end{gather}
 From \eqref{doth2-1} with $\kappa =\overline \kappa$, \eqref{defoverliekappa}, \eqref{defoverlineh2}, and  \eqref{h2=0}, one gets that
\begin{gather}
\label{h3=0}
 h_3(z(t))=0 \text{ for every $t\geq 0$, }
\end{gather}
which together with \eqref{doth3-2} implies that
\begin{gather}
\label{EF=0}
E(t)F(t)=0 \text{ for every $t\geq 0$. }
\end{gather}
Since $z(t)\in \mathcal{T}_1$, \eqref{defT1} and \eqref{EF=0} imply that
\begin{gather}
\label{F=0}
 F(t)=0 \text{ for every $t\geq 0$. }
\end{gather}
Then, if for some $t_0\geq 0$, $E(t_0)=0$, one has $(E(t_0),F(t_0))=(0,0)$, which, as already pointed out above, implies that $z(t)\rightarrow \textbf{0}$ as $t\rightarrow +\infty$. It remains to handle the case where
\begin{gather}
\label{E>0}
E(t)>0 \text{ for every $t\geq 0$. }
\end{gather}
In particular, since $z(t)\in \mathcal{T}_3$, one has, using \eqref{defT3},
\begin{gather}
M(t)>0 \text{ for every $t\geq 0$. }
\end{gather}
Then, differentiating \eqref{F=0} with respect to time and using \eqref{eq:closed-loop-u(z)-male} and \eqref{eq:defF}, one gets
\begin{gather}
E(t)=0 \text{ for every $t\geq 0$,}
\end{gather}
which leads to a contradiction with \eqref{E>0}.
 This concludes the proof of \eqref{global-attractor-in-M}.

In order to conclude the proof of Theorem~\ref{eq:thmsys} it just remains to check that
\begin{gather}
\label{stable-in-M}
\text{$\textbf{0}$ is stable  for the closed-loop system \eqref{eq:closed-loop-u(z)-male} in $\mathcal{M}$.}
\end{gather}
 For that, let $\overline U: \mathcal{D}'\rightarrow\RR_+$, $z\mapsto \overline U(z)$, be defined by
\begin{equation}
\overline  U(z) = \delta_F E +\beta_EF,
\end{equation}
which corresponds to the definition of $U$ given in \eqref{defU} with $\varepsilon=0$. Let $$z(t)=(E(t),M(t),F(t),M_s(t))^T$$ be a Filippov solution of the closed loop system for the initial condition $z(0)=(E_0,M_0,F_0,M_{s0})^T\in \mathcal{M}$. As above, we may restrict our attention to the case where
\begin{gather}
\label{E(t)>0}
E(t)>0 \text{ for every $t>0$.}
\end{gather}
Let us recall that since $z(t)\in \mathcal{M}\subset \mathcal{T}_3$, \eqref{defT3}, and \eqref{E(t)>0} imply that
\begin{gather}
\label{M(t)>0}
M(t)>0 \text{ for every $t\geq 0$.}
\end{gather}
Then, $\overline U(z(t))$ can be differentiated with respect to time and one has, by \eqref{dotU-2} with $\varepsilon =0$ and $\kappa =\overline \kappa$, and \eqref{defoverliekappa},
\begin{gather}
\label{dotoverlineU}
\dot {\overline U }(z(t))\leq 0,
\end{gather}
which shows that
\begin{gather}
\label{estimate-E-F}
E(t)+F(t)\leq \frac{\max\{\delta_E,\delta_F\}}{\min\{\delta_E,\delta_F\}}\left(E(0)+F(0)\right), \text{ for every $t\geq 0$.}
\end{gather}
It remains to estimate $M(t)$ and $M_s(t)$. Using $z(t)\in \mathcal{T}_2(\overline \kappa )$ and \eqref{defT2}, one already has
\begin{gather}
\label{M<overlinekappaMs}
M(t)\leq \overline \kappa M_s(t)  \text{ for every $t\geq 0$.}
\end{gather}
Using \eqref{eq:closed-loop-u(z)-male}, \eqref{eq:feedalpha}, \eqref{eq:defF}, \eqref{ineqoverlinekappa}, \eqref{defoverliekappa}, and \eqref{M<overlinekappaMs}, one has
\begin{gather}
\label{M<overlinekappaMs-new}
\dot M_s(t)\leq \left(k \overline \kappa +k -\delta_s\right)M_s(t) \leq 0 \text{ for every $t\geq 0$.}
\end{gather}
In particular, using also \eqref{M<overlinekappaMs},
\begin{gather}
M_s(t)\leq M_s(0) \text{ and } M(t)\leq \overline \kappa M_s(0) \text{ for every $t\geq 0$.}
\end{gather}
This concludes the proof of \eqref{stable-in-M} and, therefore, of Theorem~\ref{eq:thmsys}.
 \end{proof}

\subsubsection{Numerical simulations}
\label{sec-numerical-evidence}
In this section, we will show  numerical simulations of the dynamics when we apply  feedback \eqref{eq:feedalpha}.   We  fix $z_0 = (21910,5587, 13419,0)\notin \mathcal{M}$. We now compute  condition \eqref{eq:hypotheses-alpha2} according to the parameter set in the table \ref{eq:tableparametre}. This gives  $0.11843<k<0.12$. We take $k=0.119$.  The following figures show the evolution of the states when  condition \eqref{eq:hypotheses-alpha2} holds.

\begin{figure}[H]
	\centering
	\begin{subfigure}[H]{0.45\textwidth}
		\centering
		\includegraphics[width=\textwidth]{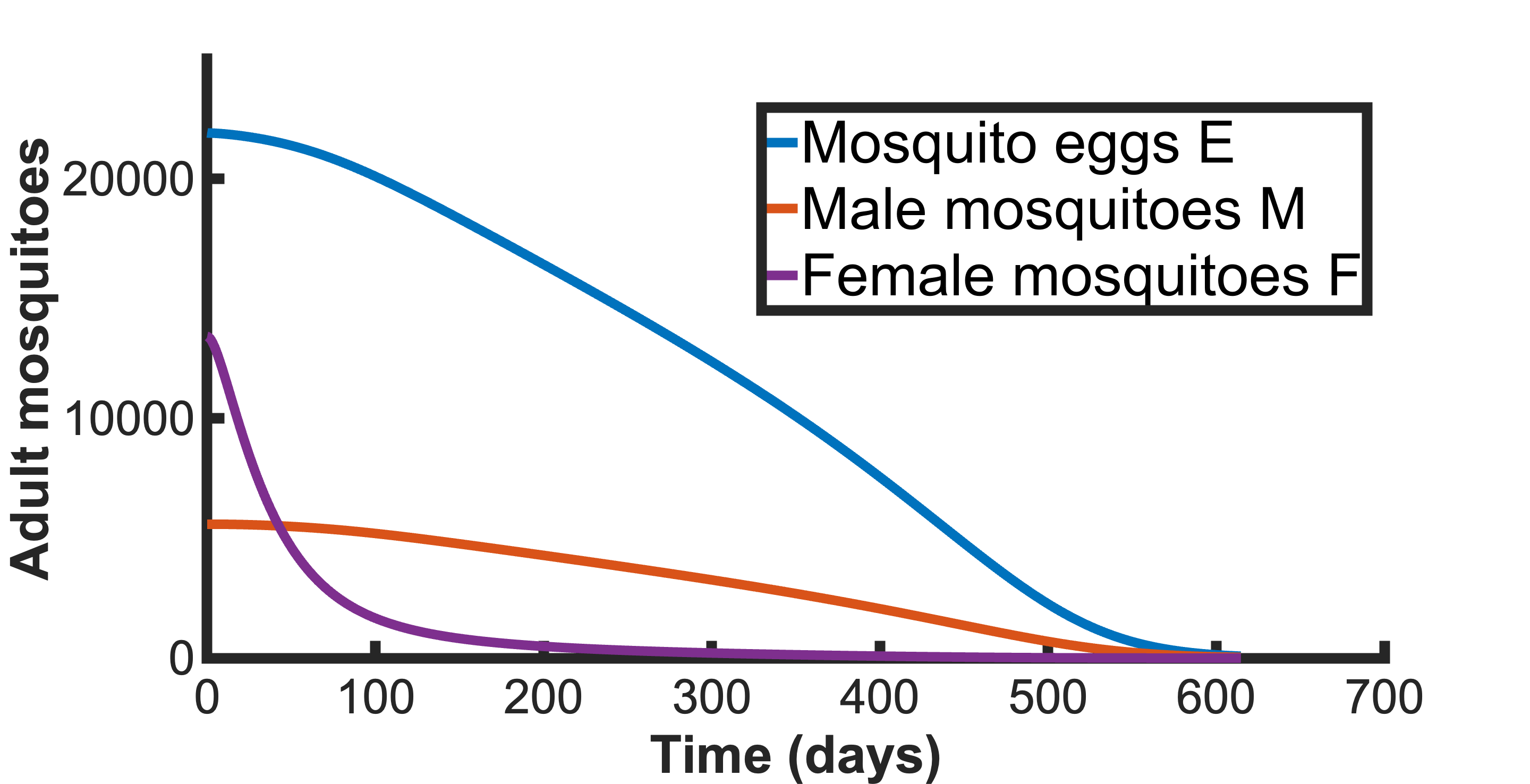}
		\caption{Plot of $E, M$ and $F$}
		\label{fig:evolutionEMF-2}
	\end{subfigure}
	\hfill
	\begin{subfigure}[H]{0.45\textwidth}
		\centering
		\includegraphics[width=\textwidth]{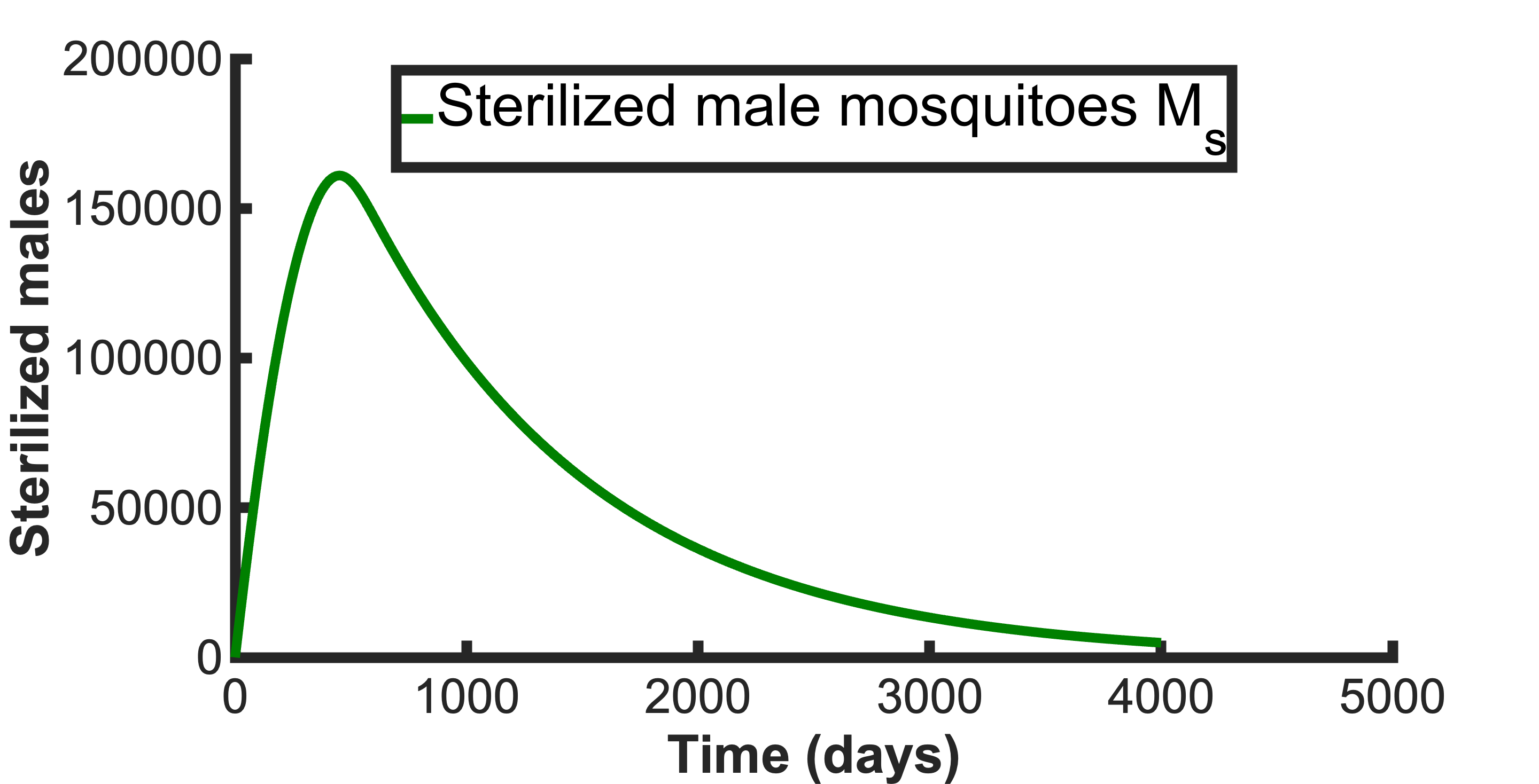}
		\caption{Plot of $M_s$}
		\label{fig:EvolutionMs-2}
	\end{subfigure}
	\hfill
	\begin{subfigure}[H]{0.5\textwidth}
		\centering
		\includegraphics[width=\textwidth]{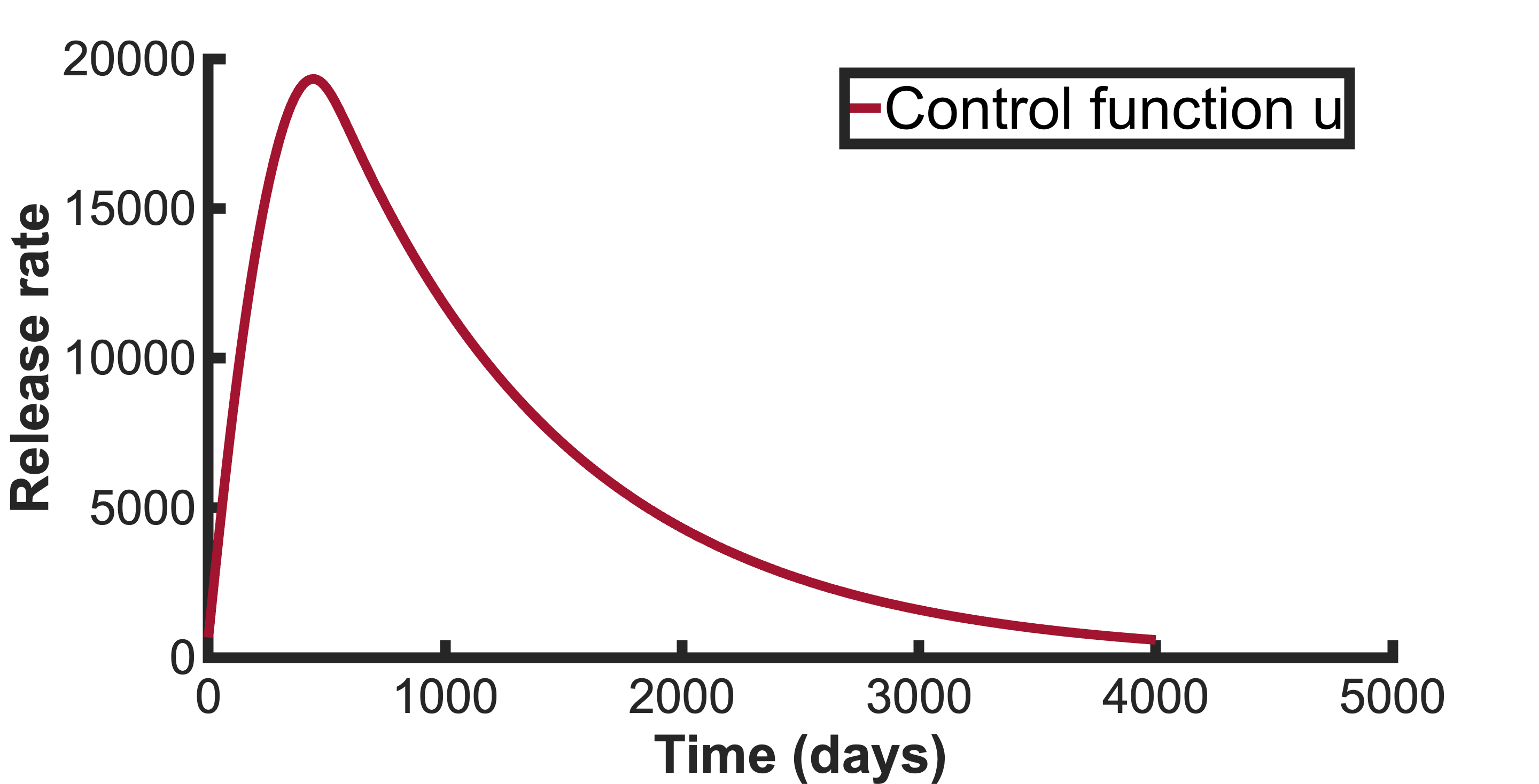}
		\caption{Plot of the control $u$}
		\label{fig:five over x-2}
	\end{subfigure}
	\begin{fig}
		(a): Plot of  $E,M$ and $F$ for system \eqref{eq:S1E1}-\eqref{eq:S1E4} when applying  feedback \eqref{eq:feedalpha}. with  the initial condition $z_0\notin \mathcal{M}$ and final time $T=800$. (b): Plot of  $M_s$ for final time $T=4000$ when we apply the backstepping feedback \eqref{eq:feedalpha}. (c): Plot of the feedback control function \eqref{eq:feedalpha}.
	\end{fig}
	\label{fig:simulation6}
	
\end{figure}

\begin{remark}
	We observe that  the convergence time of the states  $E,M$ and $F$ is longer than when we applied the backstepping feedback control \eqref{eq:backcontr}.
	In this case, with $t_f = 700$ days,
	\begin{align}
		\int_{0}^{t_f} u(t)\; dt \approx  17,91 \;\mbox{millions}.
	\end{align}
\end{remark}
	We take several initial conditions randomly and plot the resulting dynamics in figure \ref{conjecturedynamics},
	\begin{figure}[H]
		\centering
		\includegraphics[width=0.5\textwidth]{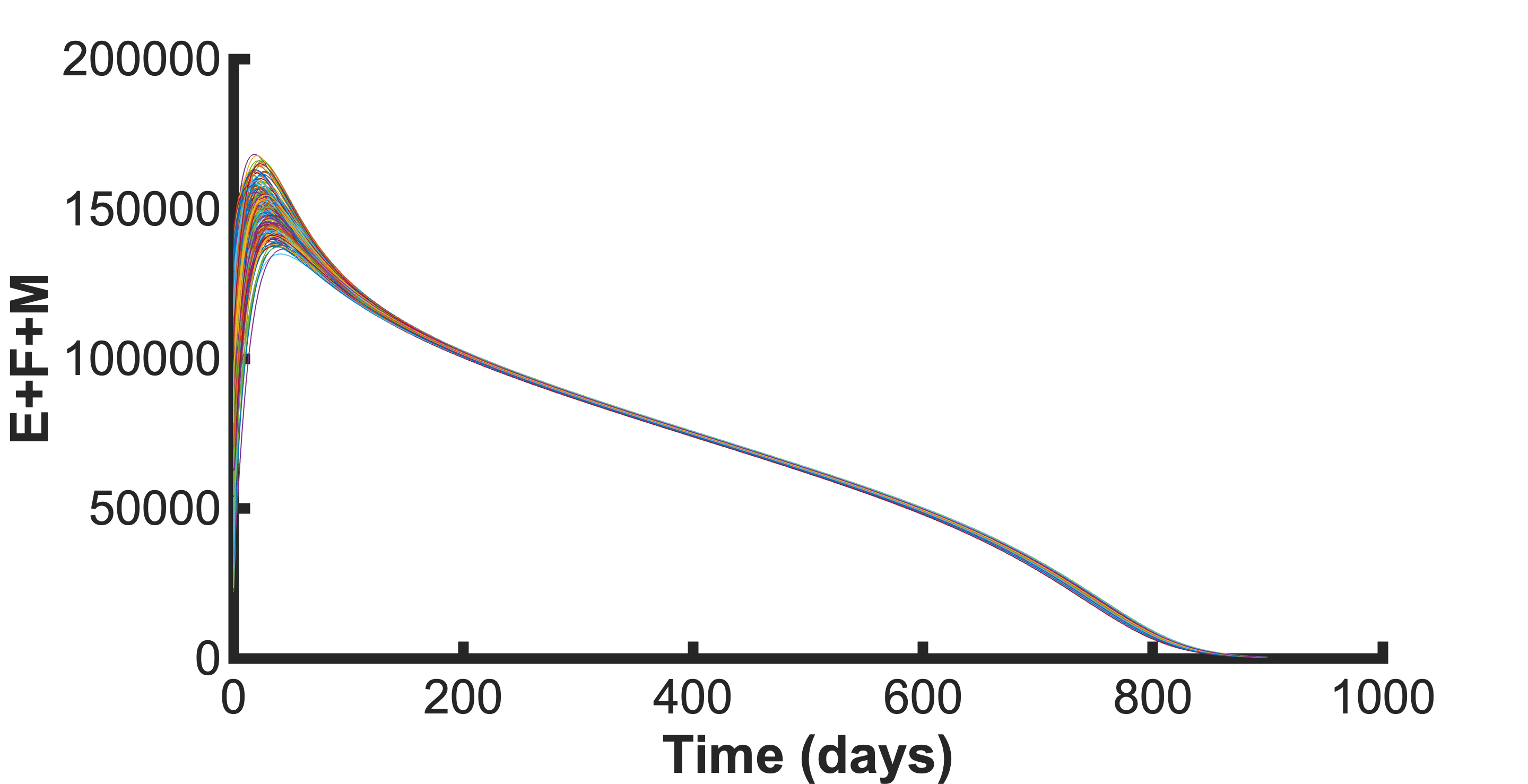}
		 \begin{fig}
   	 Plot of $\norm{x(x_0,t)}_1$ when applying the feedback \eqref{eq:feedalpha} with several randomly chosen initial conditions $x_0$.
   	 \label{conjecturedynamics}
   \end{fig}
	\end{figure}

\subsubsection{Robustness test}
 To analyze the robustness of our feedback against variations of the parameters, we carry out some variation of the parameters (new values) in table \ref{RobTest}. The results are summarized in  table \ref{RobTest}.
\begin{table}[ht]
	\begin{center}
		\setlength{\tabcolsep}{0.01cm}
		\begin{tabular}{ | m{3cm} |m{3cm}| m{7cm} | }
			\hline
			Old parameters  & New Parameters &  Simulation
			\\ \hline
			\begin{itemize}
				\item $\nu_E$= 0.05
				\item $\delta_E$= 0.03
				\item $\delta_F$= 0.04
				\item $\delta_M$ = 0.1
				\item $\delta_s$ = 0.12
				\item $\beta_E$ =8
			\end{itemize}
	&
	\begin{itemize}
		\item $\nu_E$= 0.08
		\item $\delta_E$= 0.046
		\item $\delta_F$= 0.033
		\item $\delta_M$ = 0.12
		\item $\delta_s$ = 0.139
		\item $\beta_E$ =11
	\end{itemize}
	&
	\begin{itemize}
		
		\item Plot of $E, M$ and $F$
		
	\includegraphics[width=0.3\textwidth]{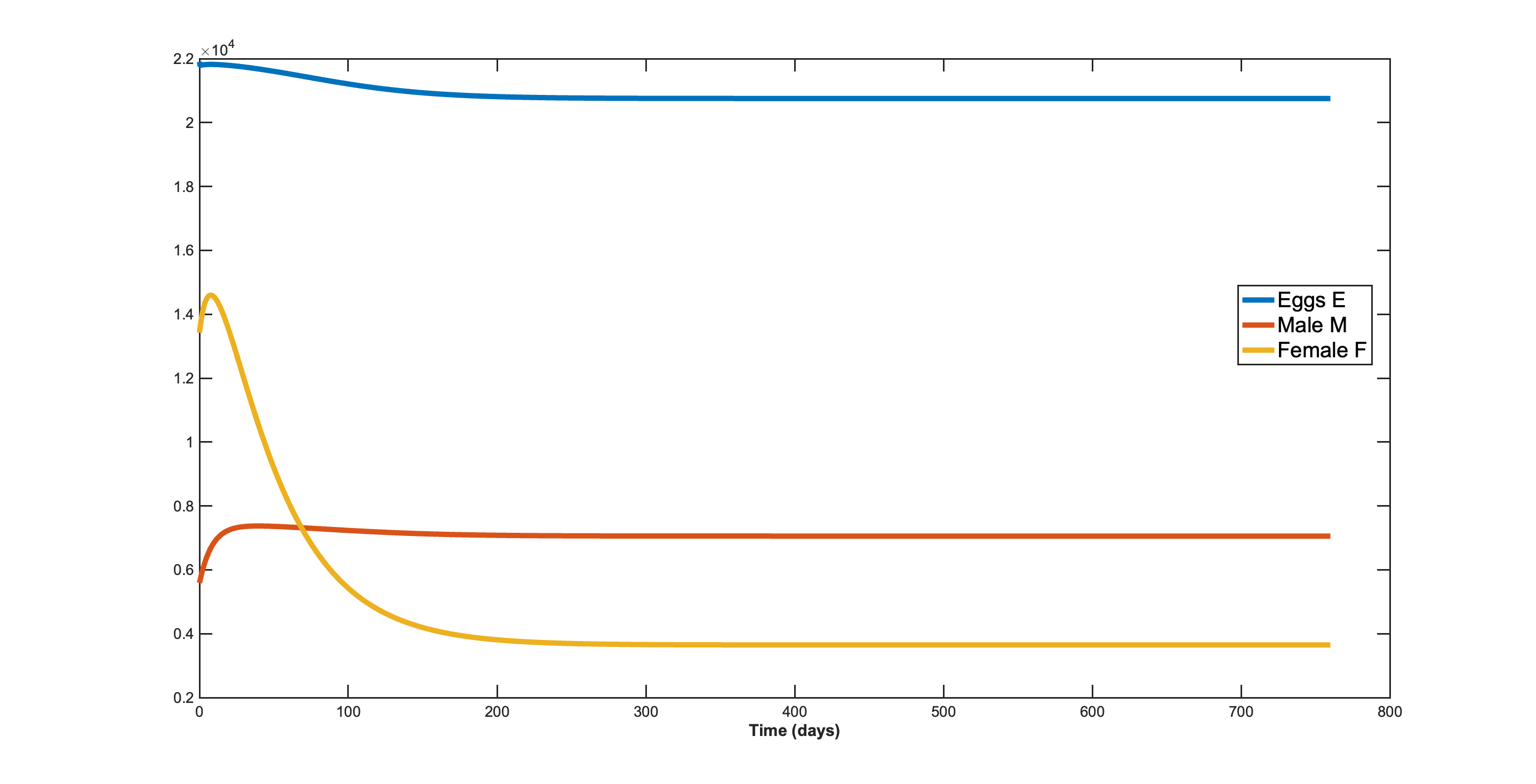}
	\end{itemize}
	\\ \hline
\end{tabular}
\end{center}
\caption{Robustness test}
\label{RobTest}
\end{table}
We observe that very small perturbations of the parameters destabilize the origin.

\subsection{Feedback laws depending only on wild male mosquitoes}
\label{sec:fertile-male-mosquitoes}
In the application of the technique it might also be possible to estimate only fertile males. For instance, in MRR experiments, sterile mosquitoes are identified by the presence of a marker, such as a dye or a fluorescent protein, which has been applied before their release (although, at present, it is not always easy to do this for all the mosquitoes released in field interventions). Nevertheless, since the technology is evolving very fast, it is possible that it can become standard practice in the near future (for instance, we recall that PCR analysis of the captured mosquitoes is already currently used thanks to genetic bar-coding). Thus, it is interesting to set up the mathematical techniques to deal with this situation. Therefore, we consider in this section the case where the feedback depends only on the state $M$. Consider the closed-loop system
\begin{equation}
	\dot{z}= F(z,u(z)),\; z=(E,M,F,M_s)^T\in \mathcal{D}',\label{eq:cloose loop2}\\
\end{equation}
where
\begin{align}
	u(z) = \lambda M\label{eq:feedlambda}
\end{align}
and
\begin{equation}
	F(z,u(z)) = \left( \begin{array}{ccc} \beta_E F \left(1-\frac{E}{K}\right) - \big( \nu_E + \delta_E \big) E\\  (1-\nu)\nu_E E - \delta_M M\\  \nu\nu_E E \frac{M}{M+\gamma_s M_s} - \delta_F F\\  \lambda M -\delta_sM_s
	\end{array}
	\right),	
\end{equation}
The offspring number related to this system  is
\begin{equation}
	\;\;\mathcal{R}_2( \lambda):= \frac{\delta_s\beta_E\nu\nu_E}{\delta_F (\nu_E+\delta_E)(\delta_s+\gamma_s \lambda)}.
	\label{offspringR2lambda}
\end{equation}
We assume that
\begin{align}
\label{R2lambda<1}
\mathcal{R}_2( \lambda)<1.
\end{align}
Note that this inequality is equivalent to
\begin{align}
	\lambda>\frac{(\beta_E\nu\nu_E- (\nu_E+\delta_E)\delta_F)\delta_s}{\gamma_s (\nu_E+\delta_E)\delta_F}.\label{eq:hypotheses lambda}
\end{align}
Let us point out that the closed-loop system \eqref{eq:cloose loop2} is exactly the closed-loop system \eqref{eq:closed-loop-u(z)-male}
 if one performs the following change of variables (with natural notations):
\begin{equation}
\label{chgt-var}
k^{\text{\eqref{eq:closed-loop-u(z)-male}}}=\lambda^{\text{\eqref{eq:cloose loop2}}} \text{ and }
\delta_s^{\text{\eqref{eq:closed-loop-u(z)-male}}}=\delta_s^{\text{\eqref{eq:cloose loop2}}}+\lambda^{\text{\eqref{eq:cloose loop2}}}.
\end{equation}
Hence  Theorem~\ref{th-invariant} and Theorem~\ref{eq:thmsys} lead to the following theorem.
\begin{theorem}\label{eq:thmsys-2}
 Assume that \eqref{R0>1} and \eqref{eq:hypotheses lambda} hold.  Then $\mathcal{M}$ is positively invariant for the closed-loop system \eqref{eq:cloose loop2} and $\textbf{0}$ is  globally asymptotically stable for the closed-loop system \eqref{eq:cloose loop2} in $\mathcal{M}$.
 \end{theorem}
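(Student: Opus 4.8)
The plan is to exploit the change of variables \eqref{chgt-var}, which identifies the closed-loop system \eqref{eq:cloose loop2} with the system \eqref{eq:closed-loop-u(z)-male} studied earlier, and then to invoke Theorem~\ref{th-invariant} and Theorem~\ref{eq:thmsys} verbatim. Write $\delta_s'$ and $k$ for the death rate and feedback gain appearing in \eqref{eq:closed-loop-u(z)-male}, and $\delta_s$, $\lambda$ for those in \eqref{eq:cloose loop2}, so that \eqref{chgt-var} reads $k=\lambda$ and $\delta_s'=\delta_s+\lambda$. The first step is to verify that the two systems genuinely coincide under this substitution. The equations for $E$, $M$ and $F$ are literally identical, so only the $M_s$-equation must be checked: in \eqref{eq:closed-loop-u(z)-male} one has $\dot M_s = k(M+M_s)-\delta_s'M_s = kM-(\delta_s'-k)M_s$, which, upon replacing $k$ by $\lambda$ and $\delta_s'-k$ by $(\delta_s+\lambda)-\lambda=\delta_s$, becomes exactly $\dot M_s=\lambda M-\delta_s M_s$, the $M_s$-equation of \eqref{eq:cloose loop2}.

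Next I would check that the hypotheses transfer. Substituting $\delta_s'-k=\delta_s$ and $\delta_s'-(1-\gamma_s)k=\delta_s+\gamma_s\lambda$ into \eqref{offspringR1alpha} turns $\mathcal{R}_1(k)$ into precisely $\mathcal{R}_2(\lambda)$ of \eqref{offspringR2lambda}. Since hypothesis \eqref{eq:hypotheses-alpha2} of Theorem~\ref{th-invariant} is equivalent to the conjunction of \eqref{R0>1}, \eqref{alpha<deltas} and \eqref{R1(alpha)<1}, it suffices to confirm these three in the transformed variables: \eqref{R0>1} is assumed; $\mathcal{R}_1(k)<1$ is exactly \eqref{R2lambda<1}, equivalent to the assumed \eqref{eq:hypotheses lambda}; and \eqref{alpha<deltas}, which reads $\lambda\in[0,\delta_s+\lambda)$, holds because $\delta_s>0$ while \eqref{R0>1} makes the right-hand side of \eqref{eq:hypotheses lambda} positive, so that $\lambda>0$.

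I would then observe that the invariant set is left unchanged by the substitution. The constant $\overline\kappa$ in \eqref{defoverliekappa} and the sets $\mathcal{T}_1$, $\mathcal{T}_3$ in \eqref{defT1}, \eqref{defT3} involve only $\beta_E,\nu,\nu_E,\delta_E,\delta_F,\gamma_s$ and $K$, none of which is altered by \eqref{chgt-var}, and $\mathcal{T}_2(\kappa)$ depends only on $\kappa$; hence $\mathcal{M}=\mathcal{T}_1\cap\mathcal{T}_2(\overline\kappa)\cap\mathcal{T}_3$ is literally the same subset of $\mathcal{D}'$ for both systems. Once the dynamics, the hypotheses and the invariant set have been identified, Theorem~\ref{th-invariant} yields the positive invariance of $\mathcal{M}$ for \eqref{eq:cloose loop2} and Theorem~\ref{eq:thmsys} yields the global asymptotic stability of $\textbf{0}$ in $\mathcal{M}$, which is the assertion of Theorem~\ref{eq:thmsys-2}.

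I do not expect a genuine obstacle here, since the whole argument is a transfer of earlier results through an affine reparametrisation; the only place demanding care is the bookkeeping of which parameters are renamed. The two sign facts to watch are that $\delta_s'=\delta_s+\lambda$ remains a legitimate positive death rate and that $\lambda>0$, the positivity needed for \eqref{alpha<deltas}, is truly supplied by \eqref{R0>1}; granting these, nothing else in the transfer can fail.
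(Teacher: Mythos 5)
Your proposal is correct and follows exactly the paper's own route: the paper proves Theorem~\ref{eq:thmsys-2} precisely by noting the identification \eqref{chgt-var} of \eqref{eq:cloose loop2} with \eqref{eq:closed-loop-u(z)-male} and then invoking Theorem~\ref{th-invariant} and Theorem~\ref{eq:thmsys}. Your write-up merely makes explicit the bookkeeping the paper leaves implicit (that $\mathcal{R}_1(k)$ becomes $\mathcal{R}_2(\lambda)$, that the transferred hypotheses follow from \eqref{R0>1} and \eqref{eq:hypotheses lambda}, and that $\overline\kappa$ and hence $\mathcal{M}$ are unchanged since they do not involve $k$ or $\delta_s$), and these verifications are all accurate.
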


\subsubsection{Numerical simulations}
In this section, we  present the numerical evolution of the states  when we apply  feedback \eqref{eq:feedalpha}. We fix as initial condition $z_0 = (21910,5587, 13419,0)\notin \mathcal{M}$ and  $K=22200 \text{ ha}^{-1}$
We now compute  condition \eqref{eq:hypotheses-alpha2} according to the parameters set in  table \ref{eq:tableparametre}. This gives  $\lambda>9.06$. We take for the simulation $\lambda = 22$.

\begin{figure}[H]
	\centering
	\begin{subfigure}[H]{0.45\textwidth}
		\centering
		\includegraphics[width=\textwidth]{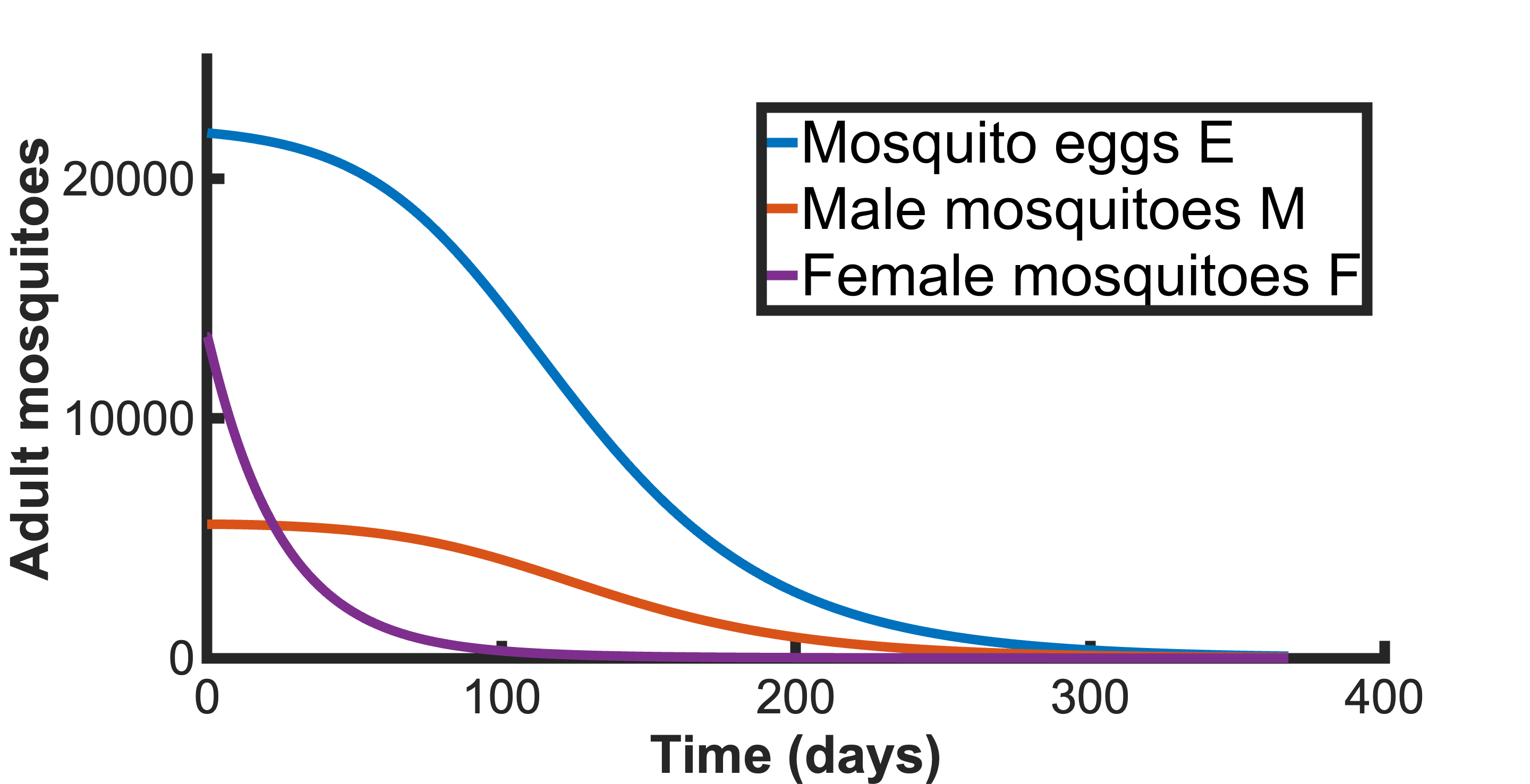}
		\caption{Plot of $E, M$ and $F$}
		\label{fig:evolutionEMF-3}
	\end{subfigure}
	\hfill
	\begin{subfigure}[H]{0.45\textwidth}
		\centering
		\includegraphics[width=\textwidth]{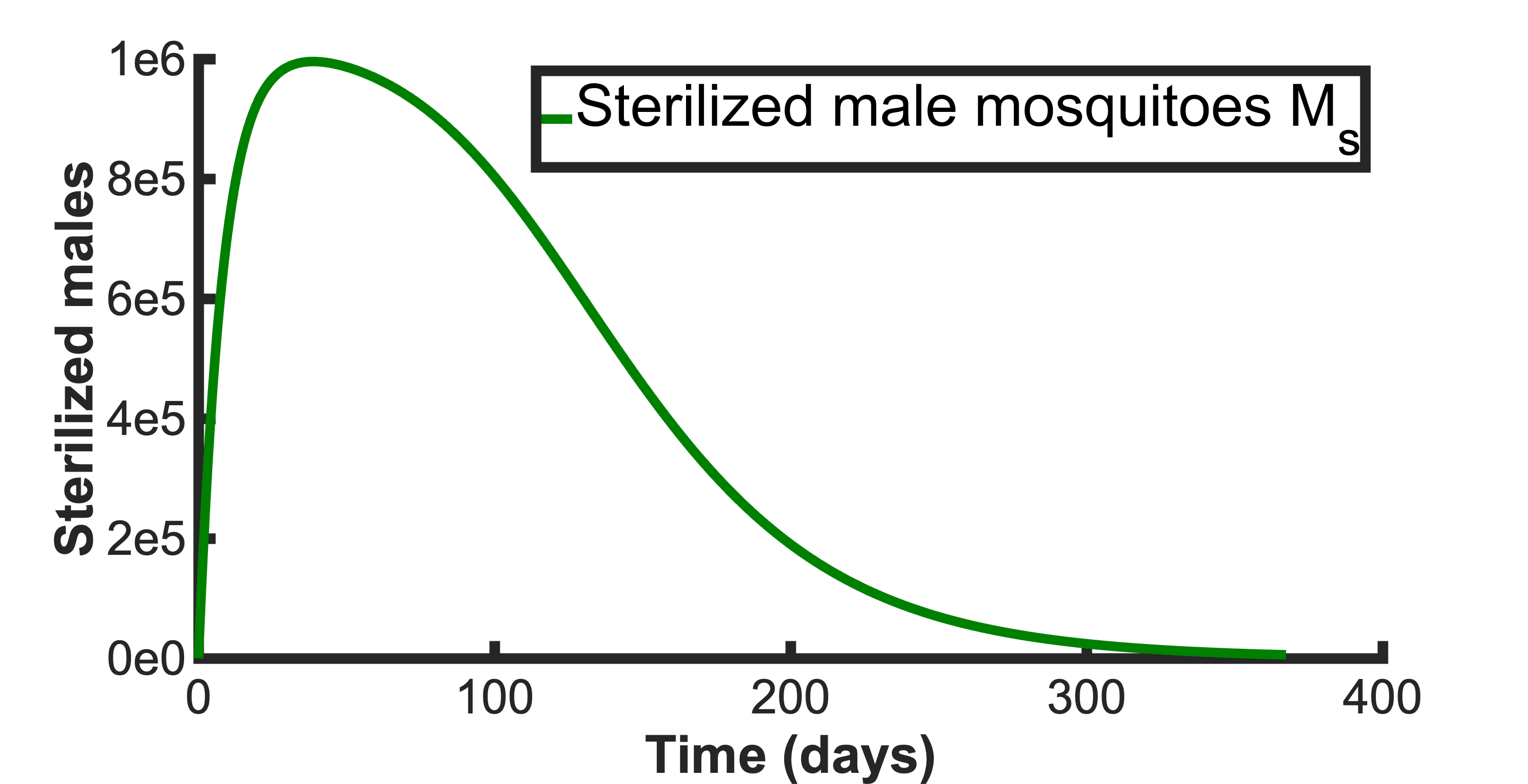}
		\caption{Plot of $M_s$}
		\label{fig:EvolutionMs-3}
	\end{subfigure}
	\hfill
	\begin{subfigure}[H]{0.5\textwidth}
		\centering
		\includegraphics[width=\textwidth]{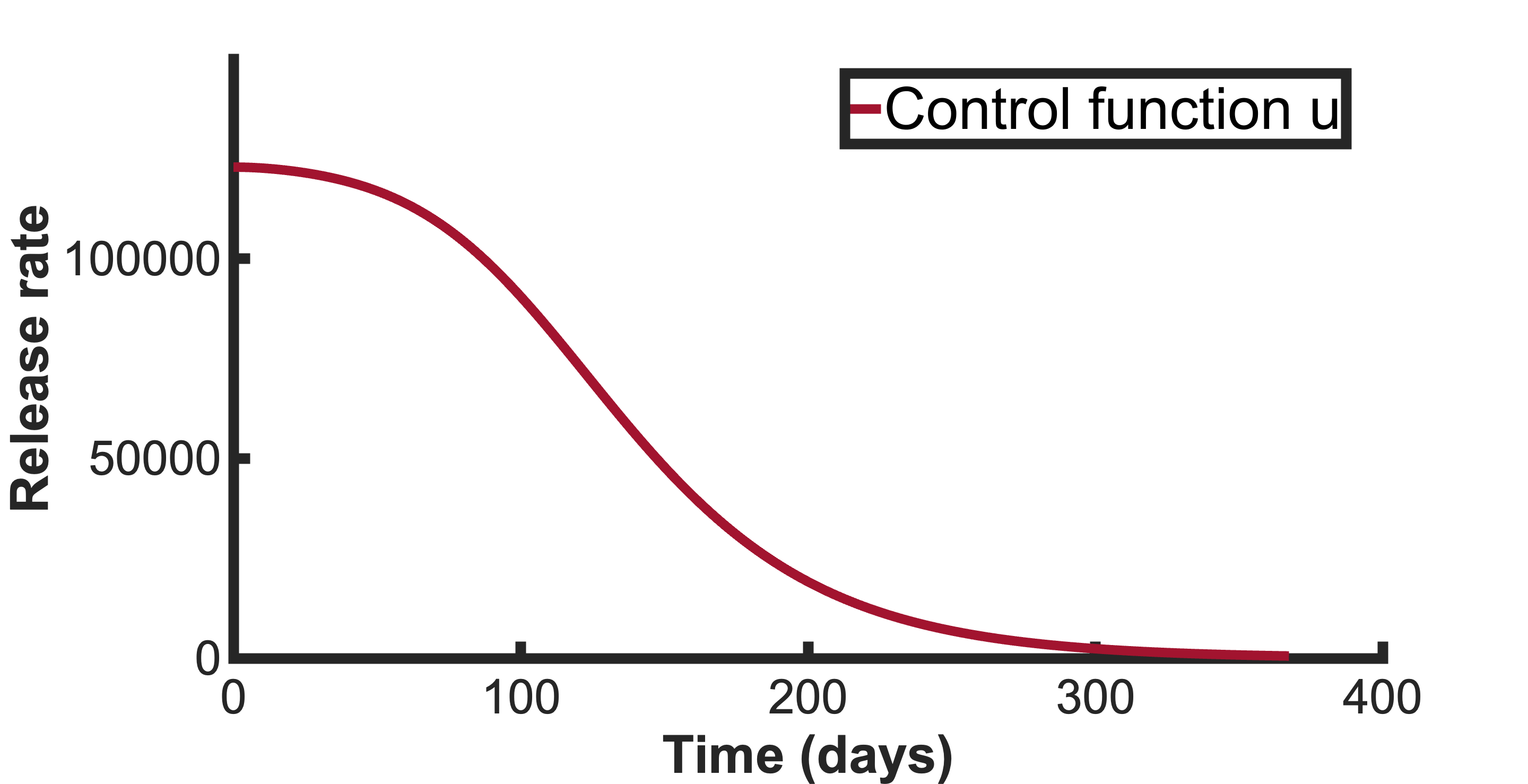}
		\caption{Plot of the control $u$}
		\label{fig:five over x-3}
	\end{subfigure}
	\begin{fig}
		(a): The results of the simulation $E,M$ and $F$ for system \eqref{eq:S1E1}-\eqref{eq:S1E4} when applying the feedback \eqref{eq:feedlambda} with  the initial condition $z_0\notin \mathcal{M}$ for final time $T=400$ and $\lambda=22$. (b): Plot of  $M_s$ for final time $T=400$. (c): Plot of  the control function \eqref{eq:feedlambda}.
	\end{fig}
	\label{fig:simulation4}
\end{figure}

\begin{remark}
	Notice that  with $t_f = 400$ days,
	\begin{align}
		\int_{0}^{t_f} u(t)\; dt \approx  17.28\;\mbox{millions}.
	\end{align}

\end{remark}

In figure \ref{eq:conject} we take several initial conditions randomly for $\lambda = 22$ .
\begin{figure}[ht]
\centering
\includegraphics[width=0.7\textwidth]{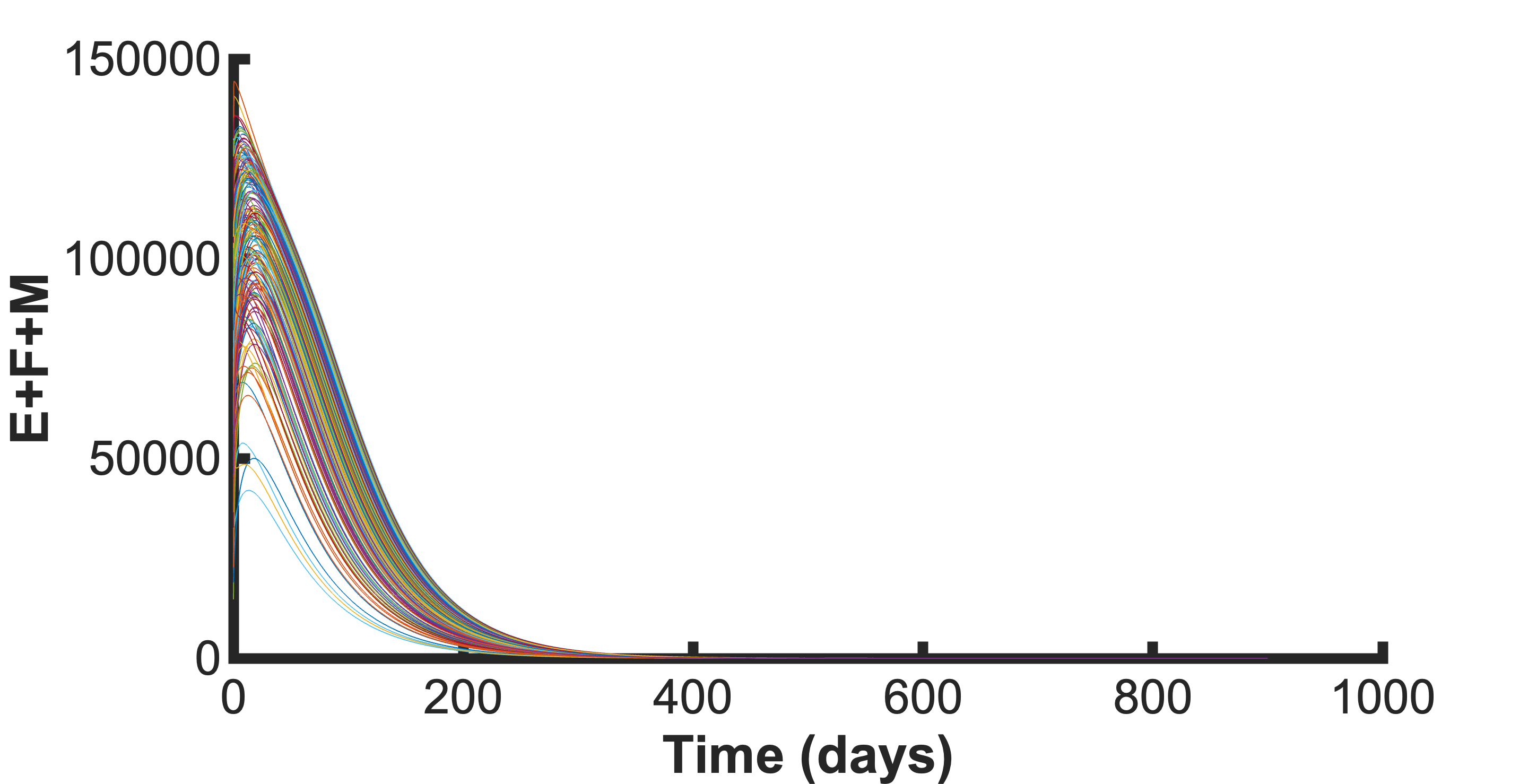}
\begin{fig}
\label{eq:conject}
Plot of $\norm{x(x_0,t)}_1$ when applying the feedback \eqref{eq:feedlambda} with several randomly chosen initial conditions $x_0$.
\end{fig}
		
\end{figure}

\subsubsection{Robustness test}
We test the robustness using the same protocol as in section \ref{see:robusttessforback}. Figure \ref{eq:Robtest-new} shows the results for 200 randomly chosen initial conditions in $[0, 10K]^4$.

\begin{figure}[H]
\centering
\includegraphics[width=0.8\textwidth]{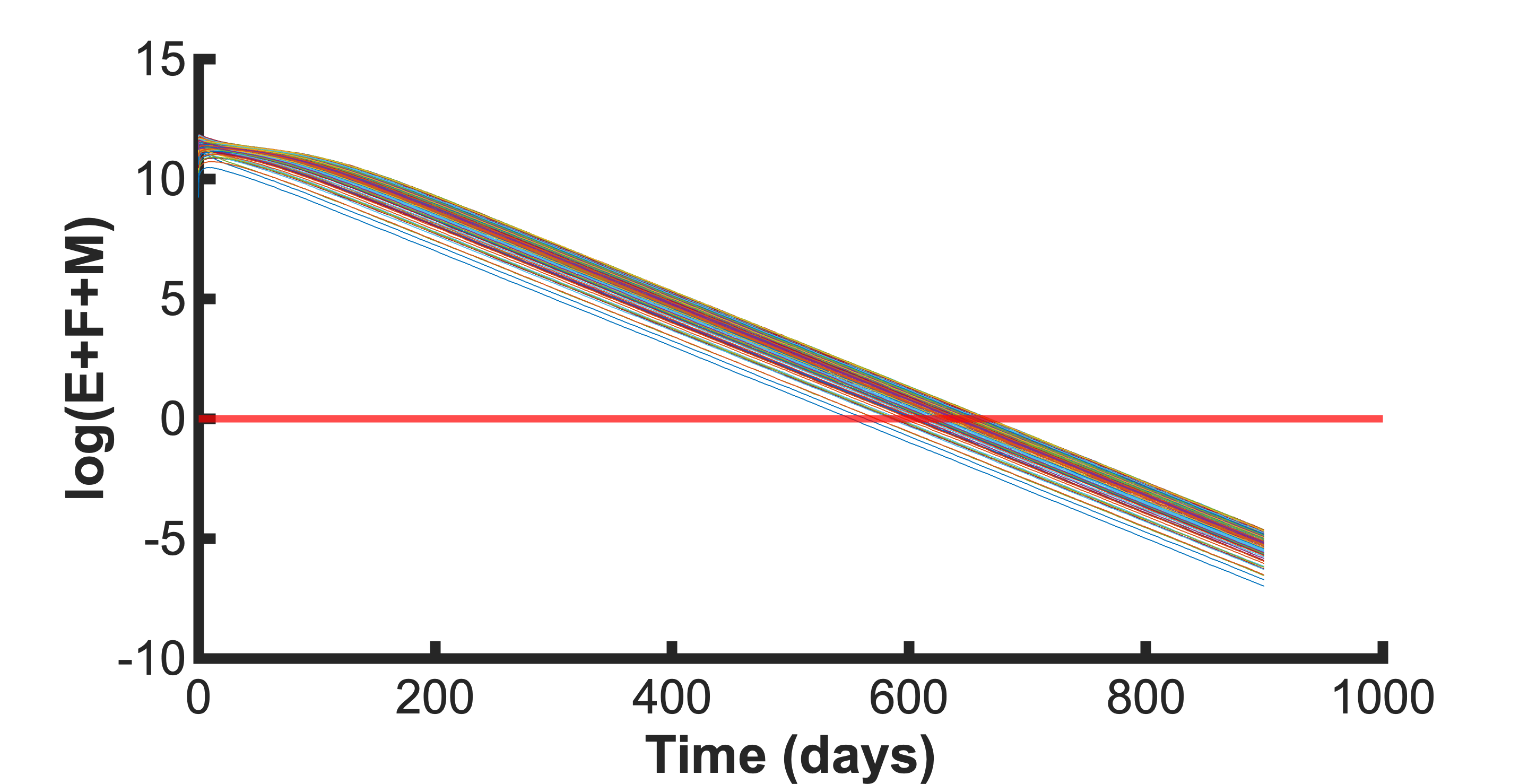}
\begin{fig}
Robustness test when applying the feedback law \eqref{eq:feedlambda} with $\lambda = 22$.
\label{eq:Robtest-new}	
\end{fig}
\end{figure}

We observe that feedback \eqref{eq:feedlambda} is robust  with respect to changes of  parameters: for rather large perturbations on the parameters it stills globally stabilizes the dynamics at the extinction equilibrium.
\section{Comparison of the feedback laws}\label{see:Comparativesect}
	In this section, we use numerical
simulations to carry out a comparative
 study of the feedback  control \eqref{eq:backcontr} and \eqref{eq:feedlambda}.
 We consider that the environmental capacity
	$K=22200 \text{ ha}^{-1}$ and that the initial condition is the persistence  equilibrium.
 Our comparison criteria are the intervention
 time and the control cost obtained when applying
  the different feedback laws. The results are presented
   in Tables \ref{tab:lambdacontrol} and \ref{tab:backst} where the intervention is presented until  $E = \frac{K}{100}$. $\lambda$ is used to regulate the
    control feedback law \eqref{eq:feedlambda}  while the regulation parameters for the
	backstepping control  are $\alpha$,  $\theta$  and $\beta_s=1\text{ Day}^{-1}$.

 Table \ref{tab:lambdacontrol}
	shows the intervention time and control cost
	for different values of  $\lambda$. In table \ref{tab:backst},  we fix  $\alpha = 80$ and
	 present the results obtained for different values of  $\theta$.
	Note that since $\alpha$ and $\theta$ are regulatory values
	for control \eqref{eq:backcontr}, a study can be carried out to find
	their optimal values  in order to have a better
	value of the control \eqref{eq:backcontr} presented here.

	\begin{table}[ht]
	       \tiny
		\centering
		\setlength{\tabcolsep}{0.03cm}
		\renewcommand{\arraystretch}{1.5}
                  \begin{tabular}{|l|c|c|c|c|c|c|c|c|c|c|c|c|c|c|}
			\hline
			\multicolumn{15}{|c|}{$u_{\lambda}$ Intervention } \\
			\hline
			$\lambda$ &9.06 & 10 & 11 & 12 & 13 & 14 & 15 &16 & 17 & 18 & 19 & 20 & 21& 22\\
			\hline
			$T_{\lambda}$ (Day$^{-1}$)  &667& 477& 424& 390& 367& 350& 336& 326& 318& 311& 305& 300& 295& 291\\

			\hline
			$(\int_{0}^{T} u_{\lambda}) $ &8.24e6& 8.61e6&9.14e6&9.72e6&1.03e7&1.09e7&1.16e7& 1.22e7&1.29e7&1.35e7&1.42e7&1.48e7&1.55e7 &1.61e7\\
			\hline
		\end{tabular}
			\caption{Intervention time and  control cost for different values of $\lambda$}
			\label{tab:lambdacontrol}
	\end{table}
	\begin{table}[ht]
	         \tiny
		\centering
		\setlength{\tabcolsep}{0.05cm}
		\renewcommand{\arraystretch}{1.5}
		\begin{tabular}{|l|c|c|c|c|c|c|c|c|c|c|c|c|c|c|c|c|c|c|c|c|c|c|c|}
			\hline
			\multicolumn{14}{|c|}{$u_{\theta}$ Intervention } \\
			\hline
			$\theta$ & 100 & 110 & 120 & 130 & 140 &150 &160& 170 & 180 & 190 & 200 & 210& 220\\
			\hline
			$T_{\theta}$ (Day$^{-1}$) &484& 445& 417& 396& 379& 366& 355& 345& 338& 331& 325& 319& 315\\
			\hline
			$(\int_{0}^{T}u_{\theta}) $ & 6.49e6&6.74e6&7.02e6&7.33e6&7.65e6&7.98e6&8.32e6&8.67e6&9.02e6&9.38e6&9.74e6&1.01e7&1.04e7\\
			\hline
		\end{tabular}
					\caption{Intervention time and control cost for different values of $\theta$}
			\label{tab:backst}
	\end{table}
\begin{remark}

For $\lambda= $10 and for $\theta = $170 we
 obtain nearly the same control cost for the two
  different interventions, but the convergence
  time for the $u_{\theta}$ intervention is smaller.
  This means that for the same control cost, the $u_{\theta}$
  intervention saves time.

  For $(\lambda,\theta) = (13 ,150)$ and $(17, 210)$, the two
  interventions give approximately the same convergence time,
  but the cost is least for the $u_{\theta}$
  intervention. We conclude that for the same
  convergence times, the $u_{\theta}$ intervention
  offers a better cost.

In conclusion, we note that thanks to
the $\alpha$ and $\theta$ control parameters,
despite the non-linearity of the  backstepping control,
 it offers a better result in terms of both
 convergence time and control cost.

\end{remark}

\section{Conclusion}
We have built feedback laws that stabilize the SIT dynamical model and have studied their robustness with respect to changes of  parameters. We study three types of feedback laws:
\begin{itemize}
\item[1)] a backstepping one in section \ref{backsteppingSubsection}.
\item[2)] one depending linearly on the total number of male mosquitoes, $M+M_s$ in section \ref{sec:male-mosquitoes}.
\item[3)] one depending linearly on the number of wild male mosquitoes $M$ in section \ref{sec:fertile-male-mosquitoes}.
\end{itemize}

For the first one we were able to prove the global asymptotic stability. Based on the analysis done in  section \ref{see:Comparativesect} we see that this  feedback law gives a better result in terms of both convergence time and control cost. However, it depends on three variables $(E,M$ and $M_s)$ which may be difficult to measure in the field.

For the second one, we proved the global asymptotic stability only in a certain invariant set $\mathcal{M}$. We conjecture that this feedback gives global stability and we show numerical evidence for this conjecture (see figure \ref{conjecturedynamics}). The advantage of this feedback law is that it depends only on the total number of male mosquitoes $M+M_s$ which is a natural quantity to measure in the field.

However, this feedback law has an important drawback due to the narrow interval allowed for the gain $\alpha$ of the feedback in \eqref{eq:hypotheses-alpha2}. This might pose a problem for the robustness of this method relative to the variations of the biological parameters.

For the third one, we proved the global asymptotic stability only in a certain invariant set $\mathcal{M}$. We also conjecture that this feedback gives global stability and we show numerical evidence for this conjecture (see figure \ref{eq:conject}). The
main difference w.r.t. the previous feedback law is that now the method is robust w.r.t. variations of the biological parameters. However, the drawback in this case is that
$M$ should be harder to measure in the field.

Changes of the environment in time, and in particular seasons (both in tropical and in temperate climates), are known to have a big impact on the mosquito populations and it will thus be important to take them into account in our future work.

Also in our work, we did not consider the pest population's spatial distribution. This has again an impact in practical terms and has been considered in several mathematical works and, in particular, those concerning invasion wave blocking \cite{almeida2022blocking},  the rolling carpet strategy \cite{almeida2023rolling} or a space dependent mosquito carrying capacity \cite{almeida:hal-04196465}.  In our future works, we will construct observers  that can estimate the state from easily measurable variables  (after this paper was submitted a first observer construction was done in \cite{observerAgbo}) and we will also integrate the spatial aspect in this dynamical model. After the first version of this paper, other output feedback results using reinforcement learning were obtained in \cite{bidi2023reinforcement, bidi2023reinforcement2}.

As stated in the introduction, although the paper is mostly written for the specific case of mosquitoes, our results can be extended to the case of other pests for which the Sterile Insect Technique is pertinent.

\section*{Acknowledgements}

The authors would like to thank Herv\'{e} Bossin and Ren\'{e} Gato for the very interesting discussions that helped them identify feedback laws that can be useful for field applications and to be aware of their limitations. We hope that our future collaborations will allow us to develop and apply the ideas put forward in this work in field interventions and learn from the results to be able to improve our strategies.

\bibliographystyle{plain}
\bibliography{biblio}

\begin{thebibliography}{10}

\bibitem{observerAgbo}
Kala {Agbo bidi}.
\newblock Feedback stabilization and observer design for sterile insect
  technique models.
\newblock {\em Mathematical Biosciences and Engineering}, 21(6):6263--6288,
  2024.

\bibitem{bidi2023reinforcement}
Kala {Agbo bidi}, Jean-Michel Coron, Amaury Hayat, and Nathan Lichtl{\'e}.
\newblock Reinforcement learning in control theory: A new approach to
  mathematical problem solving.
\newblock In {\em The 3rd Workshop on Mathematical Reasoning and AI at
  NeurIPS'23}, 2023.

\bibitem{bidi2023reinforcement2}
Kala {Agbo bidi}, Jean-Michel Coron, Amaury Hayat, and Nathan Lichtl{\'e}.
\newblock Reinforcement learning in control theory: A new approach to
  mathematical problem solving.
\newblock 2023.

\bibitem{almeida:hal-04196465}
Lu\'{i}s Almeida, Jes{\'u}s Bellver, Gwena{\"e}l Peltier, and Nicolas
  Vauchelet.
\newblock {Optimal strategies for mosquitoes replacement strategy: influence of
  the carrying capacity on spatial releases}.
\newblock preprint hal-04196465, September 2023.

\bibitem{almeida2024outbreak}
Lu\'{i}s Almeida, Jesus Bellver~Arnau, Yannick Privat, and Carlota Rebelo.
\newblock {Vector-borne disease outbreak control via instant vector releases}.
\newblock ,to appear in J. Math. Biol., 2024.

\bibitem{almeida2022optimal}
Lu{\'\i}s Almeida, Michel Duprez, Yannick Privat, and Nicolas Vauchelet.
\newblock Optimal control strategies for the sterile mosquitoes technique.
\newblock {\em Journal of Differential Equations}, 311:229--266, 2022.

\bibitem{almeida2022blocking}
Lu{\'\i}s Almeida, Jorge Estrada, and Nicolas Vauchelet.
\newblock Wave blocking in a bistable system by local introduction of a
  population: application to sterile insect techniques on mosquito populations.
\newblock {\em Math. Model. Nat. Phenom.}, 17:22, 2022.
\newblock Publisher: EDP Sciences.

\bibitem{almeida2023rolling}
Lu\'{i}s Almeida, Alexis L\'{e}culier, and Nicolas Vauchelet.
\newblock Analysis of the rolling carpet strategy to eradicate an invasive
  species.
\newblock {\em SIAM J. Math. Anal.}, 55(1):275--309, 2023.

\bibitem{2002-Ancona-Bressan-SICON}
Fabio Ancona and Alberto Bressan.
\newblock Flow stability of patchy vector fields and robust feedback
  stabilization.
\newblock {\em SIAM J. Control Optim.}, 41(5):1455--1476, 2002.

\bibitem{anguelov2012mathematical}
Roumen Anguelov, Yves Dumont, and Jean Lubuma.
\newblock Mathematical modeling of sterile insect technology for control of
  anopheles mosquito.
\newblock {\em Computers \& Mathematics with Applications}, 64(3):374--389,
  2012.

\bibitem{anguelov2020sustainable}
Roumen Anguelov, Yves Dumont, and Ivric~Valaire Yatat~Djeumen.
\newblock Sustainable vector/pest control using the permanent sterile insect
  technique.
\newblock {\em Mathematical Methods in the Applied Sciences},
  43(18):10391--10412, 2020.

\bibitem{1992-Bacciotti-book}
Andrea Bacciotti.
\newblock {\em Local stabilizability of nonlinear control systems}, volume~8 of
  {\em Series on Advances in Mathematics for Applied Sciences}.
\newblock World Scientific Publishing Co., Inc., River Edge, NJ, 1992.

\bibitem{2005-Bacciotti-Rosier-book}
Andrea Bacciotti and Lionel Rosier.
\newblock {\em Liapunov functions and stability in control theory}.
\newblock Communications and Control Engineering Series. Springer-Verlag,
  Berlin, second edition, 2005.

\bibitem{barclay1980sterile}
H~Barclay and M~Mackauer.
\newblock The sterile insect release method for pest control: a
  density-dependent model.
\newblock {\em Environmental Entomology}, 9(6):810--817, 1980.

\bibitem{2019-Bliman-Cardona-Salgado-Dumont-Vasilieva-MB}
Pierre-Alexandre Bliman, Daiver Cardona-Salgado, Yves Dumont, and Olga
  Vasilieva.
\newblock Implementation of control strategies for sterile insect techniques.
\newblock {\em Math. Biosci.}, 314:43--60, 2019.

\bibitem{2022-Bliman-Dumont-MB}
Pierre-Alexandre Bliman and Yves Dumont.
\newblock Robust control strategy by the {S}terile {I}nsect {T}echnique for
  reducing epidemiological risk in presence of vector migration.
\newblock {\em Math. Biosci.}, 350:Paper No. 108856, 23, 2022.

\bibitem{bourtzis2021sterile}
Kostas Bourtzis and Marc J.~B. Vreysen.
\newblock Sterile insect technique (sit) and its applications.
\newblock {\em Insects}, 12(7), 2021.

\bibitem{1998-Clarke-Ledyaev-Stern-1998}
Frank~H. Clarke, Yuri~S. Ledyaev, and Ronald~Jay Stern, Stern.
\newblock Asymptotic stability and smooth {L}yapunov functions.
\newblock {\em J. Differential Equations}, 149(1):69--114, 1998.

\bibitem{coron2007control}
Jean-Michel Coron.
\newblock {\em Control and nonlinearity}, volume 136 of {\em Mathematical
  Surveys and Monographs}.
\newblock American Mathematical Society, Providence, RI, 2007.

\bibitem{1994-Coron-Rosier-JMSEC}
Jean-Michel Coron and Lionel Rosier.
\newblock A relation between continuous time-varying and discontinuous feedback
  stabilization.
\newblock {\em J. Math. Systems Estim. Control}, 4(1):67--84, 1994.

\bibitem{2023Rossi}
Andrea Cristofaro and Luca Rossi.
\newblock {Backstepping control for the sterile mosquitoes technique:
  stabilization of extinction equilibrium}.
\newblock working paper or preprint, 2023.

\bibitem{1960-Filippov-MS}
Alexey~Fedorovich Filippov.
\newblock Differential equations with discontinuous right-hand side.
\newblock {\em Mat. Sb. (N.S.)}, 51 (93):99--128, 1960.

\bibitem{1988-Filippov-book}
Alexey~Fedorovich Filippov.
\newblock {\em Differential equations with discontinuous righthand sides},
  volume~18 of {\em Mathematics and its Applications (Soviet Series)}.
\newblock Kluwer Academic Publishers Group, Dordrecht, 1988.
\newblock Translated from the Russian.

\bibitem{1967-Hermes-AP}
Henry Hermes.
\newblock Discontinuous vector fields and feedback control.
\newblock In Jake~K. Hale and Joseph~P. La{S}alle, editors, {\em Differential
  {E}quations and {D}ynamical {S}ystems ({P}roc. {I}nternat. {S}ympos.,
  {M}ayaguez, {P}.{R}., 1965)}, pages 155--165. Academic Press, New York, 1967.

\bibitem{kaur2024pesticides}
Rajwinder Kaur, Diksha Choudhary, Samriddhi Bali, Shubhdeep~Singh Bandral,
  Varinder Singh, Md~Altamash Ahmad, Nidhi Rani, Thakur~Gurjeet Singh, and
  Balakumar Chandrasekaran.
\newblock Pesticides: An alarming detrimental to health and environment.
\newblock {\em Science of The Total Environment}, page 170113, 2024.

\bibitem{world2023world}
World~Health Organization.
\newblock {\em World malaria report 2023}.
\newblock World Health Organization, 2023.

\bibitem{parakrama2018}
SHP Parakrama~Karunaratne, Priyanka~P De~Silva, Thilini~Chathurika Weeraratne,
  Sinnathamby~Noble Surendran, et~al.
\newblock Insecticide resistance in mosquitoes: Development, mechanisms and
  monitoring.
\newblock 2018.

\bibitem{sanchez2021indirect}
Francisco S{\'a}nchez-Bayo.
\newblock Indirect effect of pesticides on insects and other arthropods.
\newblock {\em Toxics}, 9(8):177, 2021.

\bibitem{smith1995monotone}
Hal~L Smith.
\newblock {\em Monotone dynamical systems: an introduction to the theory of
  competitive and cooperative systems: an introduction to the theory of
  competitive and cooperative systems}.
\newblock Number~41. American Mathematical Soc., 1995.

\bibitem{strugarek2019use}
Martin Strugarek, Herv{\'e} Bossin, and Yves Dumont.
\newblock On the use of the sterile insect release technique to reduce or
  eliminate mosquito populations.
\newblock {\em Applied Mathematical Modelling}, 68:443--470, 2019.

\bibitem{vreysen2006sterile}
Marc~JB Vreysen, Jorge Hendrichs, and Walther~R Enkerlin.
\newblock The sterile insect technique as a component of sustainable area-wide
  integrated pest management of selected horticultural insect pests.
\newblock {\em Journal of Fruit and Ornamental Plant Research}, 14:107, 2006.

\bibitem{world2023Dengue}
{World Health Organization}.
\newblock Dengue and severe dengue - global update, 2023.
\newblock Accessed: 2024-08-17.

\bibitem{yang2009assessing}
HMl Yang, MDLDG Macoris, KC~Galvani, MTM Andrighetti, and DMV Wanderley.
\newblock Assessing the effects of temperature on the population of aedes
  aegypti, the vector of dengue.
\newblock {\em Epidemiology \& Infection}, 137(8):1188--1202, 2009.

\end{thebibliography}

\end{document}